\newcommand{\sca}[2]{\langle #1 | #2\rangle}
\newcommand{\nr}[1]{\left\Vert #1\right\Vert}
\newcommand{\abs}[1]{\left\vert #1\right\vert}
\newcommand{\Rsp}{\mathbb{R}}
\newcommand{\id}{\mathrm{id}}
\newcommand{\DD}{\mathrm{D}}
\newcommand{\Class}{\mathcal{C}}
\newcommand{\Wass}{\mathrm{W}}
\newcommand{\X}{\mathcal{X}}
\newcommand{\dd}{\mathrm{d}}
\renewcommand{\L}{\mathrm{L}}
\newcommand{\diam}{\mathrm{diam}}
\newcommand{\eps}{\varepsilon}
\newcommand{\TV}{\mathrm{TV}}
\newcommand{\Prob}{\mathcal{P}}
\newcommand{\Var}{\mathbb{V}\mathrm{ar}}
\newcommand{\interior}{\mathrm{int}}
\newcommand{\PP}{\mathbb{P}}
\newcommand{\QQ}{\mathbb{Q}}
\newcommand{\Esp}{\mathbb{E}}
\newcommand{\Kant}{\mathcal{K}}
\newcommand{\Primal}{\mathrm{(P)}_\PP}
\newcommand{\Dual}{\mathrm{(D)_\PP}}
\newcommand{\spt}{\mathrm{spt}}
\newcommand{\WWass}{\mathcal{W}}
\newcommand{\Haus}{\mathcal{H}}
\newcommand{\per}{\mathrm{per}}
\newmdtheoremenv{theo}{Theorem}
\newmdtheoremenv{coro}{Corollary}
\newtheorem{theorem}{Theorem}[section]
\newtheorem*{theoremst}{Theorem}
\newtheorem{corollary}[theorem]{Corollary}
\newtheorem{proposition}[theorem]{Proposition}
\newtheorem*{propositionst}{Proposition}
\newtheorem{assumption}[theorem]{Assumption}
\theoremstyle{definition}
\newtheorem{remark}{Remark}[section]
\newtheorem{example}[theorem]{Example}
\title[Quantitative Stability of Barycenters in the Wasserstein space]{Quantitative Stability of Barycenters \\in the Wasserstein space}
\author{Guillaume Carlier}
\address{Ceremade, Univ. Paris-Dauphine PSL, 75775 Paris \and Mokaplan, Inria Paris}
\email{carlier@ceremade.dauphine.fr}
\author{Alex Delalande}
\address{Lagrange Mathematics and Computing Research Center, 75007, Paris, France}
\email{delalande.alex@gmail.com}
\author{Quentin Mérigot}
\address{Université Paris-Saclay, CNRS, Laboratoire de mathématiques d’Orsay, 91405, Orsay, France \and Institut universitaire de France}
\email{quentin.merigot@universite-paris-saclay.fr}
\begin{document}

\maketitle

\begin{abstract}
   Wasserstein barycenters define averages of probability measures in a geometrically meaningful way. Their use is increasingly popular in applied fields, such as image, geometry or language processing. In these fields however, the probability measures of interest are often not accessible in their entirety and the practitioner may have to deal with statistical or computational approximations instead. In this article, we quantify the effect of such approximations on the corresponding barycenters. We show that Wasserstein barycenters depend in a Hölder-continuous way on their marginals under relatively mild assumptions. Our proof relies on recent estimates that allow to quantify the strong convexity of the barycenter functional. Consequences regarding the statistical estimation of Wasserstein barycenters and the convergence of regularized Wasserstein barycenters towards their non-regularized counterparts are explored. 
\end{abstract} 

\vspace{0.1cm}

\textbf{Keywords:} Optimal transport, Barycenters, Quantitative stability.

\smallskip

\textbf{2020 Mathematics Subject Classification:} 49Q22, 49K40.

\section{Introduction}
\label{sec:introduction}

Wasserstein barycenters are Fréchet means in Wasserstein spaces: they define averages of families of probability measures that are consistent with the optimal transport geometry and generalize to more than two measures the fundamental notion of displacement interpolation due to McCann \cite{McCann}. As such, they average out probability measures in a geometrically meaningful way and appear as a relevant tool to interpolate or summarize measure data. This notion of barycenter have indeed found many successful applications, for instance in image processing \cite{image_processing}, geometry processing \cite{geometry_processing}, language processing \cite{language_processing_1, language_processing_2, language_processing_3}, statistics \cite{JMLR:v19:17-084} or machine learning \cite{pmlr-v32-cuturi14, pmlr-v70-ho17a}. We refer the readers to existing surveys \cite{comp_OT, stats_wass_space} for further applications. In such applications however, the probability measures of interest are often not accessible in their entirety. They may be accessible for instance only through noisy samples in a statistical context, or they may be approximated in order to use existing computational methods that estimate Wasserstein barycenters (see e.g. \cite{refId0, IBP, pmlr-v32-cuturi14, JMLR:v22:20-588}) while paying an affordable computational cost. Thus, in addition to the computational error induced by the algorithm used to calculate the barycenter, the practitioner may be subject to an extra statistical or approximation error that corresponds to the approximation of the marginal measures of interest. While works focusing on the computation of Wasserstein barycenters may now come with guarantees on the first type of error (see e.g. \cite{JMLR:v22:20-588}), very little is known on the second type of error, which corresponds broadly speaking to a stability error since it quantifies the effect of a perturbation of the marginals on the corresponding barycenters. In this work, we focus on this type of error and show that the Wasserstein barycenter depends in an Hölder-continuous way on its marginal measures under regularity assumptions on (some of) the latter. In the remainder of this section, we define Wasserstein barycenters and the setting we focus on. We then show that mild regularity assumptions are necessary in order to hope for any stability result. Next, we give the dual formulation of the Wasserstein barycenter problem in our context, that is necessary to present our main assumption. This assumption and our main result are then stated and we conclude this section by giving some immediate but useful consequences of our main result.

\subsection{Wasserstein barycenters} Introduced in \cite{agueh:hal-00637399} for finite families of probability measures supported over a Euclidean space, the definition of Wasserstein barycenters have been extended to infinite families of probability measures in \cite{bigot-klein, PASS2013947}, possibly supported over a Riemannian manifold in \cite{KIM2017640, LeGouic2017}. In this work, we focus on families of probability measures supported over a compact Euclidean domain. Let $\Omega = B(0, R) \subset \Rsp^d$ be the ball of $\Rsp^d$ centered at zero and of radius $R > 0$ and denote $\Prob(\Omega)$ the set of Borel probability measures over $\Omega$. We endow $\Prob(\Omega)$ with the $2$-Wasserstein distance $\Wass_2$ defined for any $\rho, \mu \in \Prob(\Omega)$ by
\begin{equation*}
    \Wass_2(\rho,  \mu) = \left( \min_{\gamma \in \Gamma(\rho, \mu)} \int_{\Omega \times \Omega} \nr{x-y}^2 \dd \gamma(x,y) \right)^{1/2},
\end{equation*}
where the minimum is taken over the set $\Gamma(\rho, \mu)$ of transport plans between $\rho$ and $\mu$. 
We equip $\Prob(\Omega)$ with the 
the topology induced by $\Wass_2$ (i.e. the weak topology) and denote $\Prob(\Prob(\Omega))$ the set of corresponding Borel probability measures over $\Prob(\Omega)$. For a measure $\PP \in \Prob(\Prob(\Omega))$, we introduce its \emph{variance functional} $F_\PP$ defined from $\Prob(\Omega)$ to $\Rsp$ by:
\begin{align*}
F_\PP : \mu \mapsto \frac{1}{2} \int_{\Prob(\Omega)} \Wass_2^2(\rho, \mu) \dd \PP(\rho).
\end{align*}
A Wasserstein barycenter of $\PP \in \Prob(\Prob(\Omega))$ is then defined as a minimizer $\mu_\PP$ of the variance functional $F_\PP$:
\begin{equation*}
    \mu_\PP \in \arg \min_{\mu \in \Prob(\Omega)}  F_\PP(\mu).
\end{equation*}
\noindent Such a minimizer always exists, 
and it is uniquely defined whenever $\PP(\Prob_{a.c.}(\Omega))>0$, 
where $\Prob_{a.c.}(\Omega)$ denotes the set of probability measures over $\Omega$ that are absolutely continuous with respect to the Lebesgue measure \cite{KIM2017640, LeGouic2017}. 

\subsection{Stability of Wasserstein barycenters}
As mentioned above, the population of interest $\PP \in \Prob(\Prob(\Omega))$ may not always be accessible in practice, and one may have to deal with another measure $\QQ \in \Prob(\Prob(\Omega))$ instead. The stability question that then comes up is the following: can we bound a distance between minimizers $\mu_{\PP}$ of $F_\PP$ and $\mu_{\QQ}$ of $F_\QQ$ in terms of a distance between $\PP$ and $\QQ$? While the above-defined $2$-Wasserstein distance gives a natural metric to compare $\mu_\PP$ and $\mu_\QQ$, there remains to choose a metric in order to compare  $\PP$ and $\QQ$. For this, we will use the following $1$-Wasserstein distance over $\Prob(\Prob(\Omega))$, defined for any $\PP, \QQ$ in $\Prob(\Prob(\Omega))$ by
\begin{equation*}
    \WWass_1(\PP, \QQ) = \min_{\gamma \in \Gamma(\PP, \QQ)} \int_{\Prob(\Omega) \times \Prob(\Omega)} \Wass_2(\rho, \tilde{\rho}) \dd \gamma(\rho, \tilde{\rho}).
\end{equation*}
This choice of distance is justified by the fact that Wasserstein distances are naturally defined for probability measures on the compact metric space $(\Prob(\Omega), \Wass_2)$ and that they allow to compare measures that have incomparable support. The $1$-Wasserstein distance being the weakest of the Wasserstein distances, our bounds are ensured to be the sharpest in terms of this optimal transport geometry. We are thus interested in bounding $\Wass_2(\mu_\PP, \mu_\QQ)$ in terms of $\WWass_1(\PP, \QQ)$ for $\PP, \QQ \in \Prob(\Prob(\Omega))$.

\subsubsection{Consistency of Wasserstein barycenters.} Before looking for any quantitative stability result, one may first wonder if the Wasserstein barycenters depend at least in a continuous way on their marginals. This question, framed under the notion of \emph{consistency} of Wasserstein barycenters, has been answered positively in \cite{bigot-klein, boissard-legouic-loubes} in some specific settings and in \cite{LeGouic2017} in the most general setting. Theorem 3 of \cite{LeGouic2017} ensures in particular the following: 

\begin{theoremst}[Le Gouic, Loubes]
Let $\PP \in \Prob(\Prob(\Omega))$ and a sequence $(\PP_n)_{n \geq 1} \in \Prob(\Prob(\Omega))$ be such that $$\WWass_1(\PP_n, \PP) \xrightarrow[n \to + \infty]{} 0.$$ For all $n \geq 1$, denote $\mu_{\PP_n}$ a barycenter of $\PP_n$. Then the sequence $(\mu_{\PP_n})_{n \geq 1}$ is precompact in $(\Prob(\Omega), \Wass_2)$ and any limit is a barycenter of $\PP$.  
\end{theoremst}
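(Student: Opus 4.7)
The plan is to combine a compactness argument (for the first assertion) with a standard $\Gamma$-convergence style argument based on the uniform Lipschitz behavior of $\rho \mapsto \Wass_2^2(\rho,\mu)$ (for the second assertion). Since $\Omega$ is compact, $(\Prob(\Omega),\Wass_2)$ is itself a compact metric space; hence any sequence $(\mu_{\PP_n})_{n\geq 1} \subset \Prob(\Omega)$ is automatically precompact, which settles the first claim with no further work.

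For the second claim, the key observation is the uniform control
\begin{equation*}
\tfrac{1}{2}\bigl|\Wass_2^2(\rho,\mu) - \Wass_2^2(\tilde\rho,\mu)\bigr| \;\leq\; 2R\,\Wass_2(\rho,\tilde\rho),
\end{equation*}
which follows from $|\Wass_2(\rho,\mu)-\Wass_2(\tilde\rho,\mu)|\leq \Wass_2(\rho,\tilde\rho)$ and the diameter bound $\Wass_2(\cdot,\cdot)\leq 2R$ on $\Prob(\Omega)$. First I would use this to deduce that, for every fixed $\mu$, the integrand $\rho\mapsto \tfrac12 \Wass_2^2(\rho,\mu)$ is $2R$-Lipschitz on $(\Prob(\Omega),\Wass_2)$, so by the Kantorovich--Rubinstein dual characterization of $\WWass_1$,
\begin{equation*}
\sup_{\mu\in\Prob(\Omega)} \bigl|F_\PP(\mu)-F_{\PP_n}(\mu)\bigr| \;\leq\; 2R\,\WWass_1(\PP_n,\PP) \;\xrightarrow[n\to\infty]{}\; 0.
\end{equation*}

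The second ingredient is the continuity of the fixed functional $F_\PP$ with respect to $\mu$: since $\mu\mapsto \Wass_2^2(\rho,\mu)$ is likewise $2R$-Lipschitz (and bounded by $4R^2$), dominated convergence gives that $F_\PP:(\Prob(\Omega),\Wass_2)\to\Rsp$ is continuous. Now let $\mu^\star$ be the limit of some subsequence $\mu_{\PP_{n_k}}\to\mu^\star$. For an arbitrary test measure $\nu\in\Prob(\Omega)$, I would chain the two estimates above:
\begin{equation*}
F_\PP(\mu^\star)
\;=\; \lim_{k\to\infty} F_\PP(\mu_{\PP_{n_k}})
\;=\; \lim_{k\to\infty} F_{\PP_{n_k}}(\mu_{\PP_{n_k}})
\;\leq\; \lim_{k\to\infty} F_{\PP_{n_k}}(\nu)
\;=\; F_\PP(\nu),
\end{equation*}
where the first equality uses continuity of $F_\PP$, the second and fourth use the uniform convergence of functionals, and the inequality uses that $\mu_{\PP_{n_k}}$ minimizes $F_{\PP_{n_k}}$. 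This shows that $\mu^\star$ minimizes $F_\PP$, i.e.\ is a barycenter of $\PP$.

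The main (and only genuine) obstacle is the uniform-in-$\mu$ continuity of $\PP\mapsto F_\PP(\mu)$ with respect to $\WWass_1$; everything else is soft. Once the Lipschitz bound on $\rho\mapsto \tfrac12\Wass_2^2(\rho,\mu)$ is obtained, Kantorovich--Rubinstein duality delivers this control immediately, so no extra tightness or selection arguments are needed beyond the compactness of $\Prob(\Omega)$.
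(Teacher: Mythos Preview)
Your argument is correct. Note, however, that the paper does not give its own proof of this statement: it is quoted as Theorem~3 of Le~Gouic--Loubes \cite{LeGouic2017} and used as background. So there is no ``paper's proof'' to compare against here.

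That said, your proof is genuinely simpler than the original, and it is worth understanding why. Le~Gouic and Loubes work on a general geodesic space $E$ and with populations in $\Prob_p(\Prob_p(E))$; in that setting neither $\Prob_p(E)$ nor the sequence of barycenters is compact a priori, so they must first establish tightness of $(\mu_{\PP_n})_n$ from the convergence $\WWass_p(\PP_n,\PP)\to 0$, and they cannot use a uniform Lipschitz bound on $\rho\mapsto \Wass_p^p(\rho,\mu)$ since the diameter is infinite. You exploit two features specific to this paper's setting: (i) $\Omega=B(0,R)$ is compact, so $(\Prob(\Omega),\Wass_2)$ is compact and precompactness is free; (ii) the diameter bound $\Wass_2\le 2R$ makes $\rho\mapsto \tfrac12\Wass_2^2(\rho,\mu)$ uniformly $2R$-Lipschitz, which via Kantorovich--Rubinstein upgrades pointwise convergence $F_{\PP_n}(\mu)\to F_\PP(\mu)$ to uniform convergence. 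This is exactly the mechanism the paper itself uses later (see the proof of Theorem~\ref{th:stab-bar}, where the $4R$-Lipschitz continuity of $\rho\mapsto \tfrac12(\Wass_2^2(\rho,\mu_\QQ)-\Wass_2^2(\rho,\mu_\PP))$ is invoked). So your proof is both correct and in the spirit of the surrounding arguments.
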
 
\noindent This result ensures the continuity of Wasserstein barycenters with respect to the marginal measures, at least in our setting, so that we can now legitimately look for bounds that quantify this continuity. 

\subsubsection{Quantitative stability in dimension $d=1$.} \label{sec:stab-dim-1} In dimension $d = 1$, the derivation of quantitative stability bounds for Wasserstein barycenters is straightforward. Indeed, in this context $\Wass_2$ is Hilbertian, which ensures a Lipschitz behavior of the barycenters with respect to their marginals. More precisely, denoting $Q_\rho$ the \emph{quantile function} of a measure $\rho \in \Prob(\Omega)$ (i.e. the generalized inverse of its cumulative distribution function), one has for any measures $\rho, \mu \in \Prob(\Omega)$ that $\Wass_2(\rho, \mu) = \nr{Q_\rho - Q_\mu}_{\L^2([0,1])}$. This leads for any $\PP \in \Prob(\Prob(\Omega))$ to a simple formula for the unique barycenter:
\begin{equation*}
    \mu_\PP = \left( \int_{\Prob(\Omega)} Q_\rho \dd \PP(\rho) \right)_{\#} \lambda_{[0, 1]},
\end{equation*} 
where $\lambda_{[0, 1]}$ denotes the Lebesgue measure over $[0, 1]$. Using this fact and the triangle inequality, one immediately obtains the following Lipschitz stability result, that actually holds for any families of measures in the set $\Prob_2(\Rsp)$ of probability measures supported over $\Rsp$ that admit a finite second-order moment:
\begin{propositionst}
Let $\PP, \QQ \in \Prob(\Prob_2(\Rsp))$ and denote $\mu_\PP, \mu_\QQ$ their respective barycenters. Then
$$ \Wass_2(\mu_\PP, \mu_\QQ) \leq \WWass_1(\PP, \QQ). $$
\end{propositionst}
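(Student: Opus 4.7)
The plan is to exploit the explicit quantile representation of one-dimensional barycenters stated just above and reduce the problem to an inequality between integrals of quantile functions in $\L^2([0,1])$. Concretely, I would set $T_\PP := \int_{\Prob_2(\Rsp)} Q_\rho \,\dd\PP(\rho)$ and $T_\QQ := \int_{\Prob_2(\Rsp)} Q_{\tilde\rho}\,\dd\QQ(\tilde\rho)$, viewed as elements of $\L^2([0,1])$; each is a Bochner integral of monotone non-decreasing functions and is therefore itself non-decreasing, hence equal (almost everywhere) to the quantile function of its pushforward under $\lambda_{[0,1]}$. Consequently $\Wass_2(\mu_\PP,\mu_\QQ) = \nr{T_\PP - T_\QQ}_{\L^2([0,1])}$.

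Next, for an arbitrary coupling $\gamma \in \Gamma(\PP,\QQ)$, I would write
\begin{equation*}
T_\PP - T_\QQ = \int_{\Prob_2(\Rsp)\times\Prob_2(\Rsp)} (Q_\rho - Q_{\tilde\rho})\,\dd\gamma(\rho,\tilde\rho),
\end{equation*}
which follows from the fact that the marginals of $\gamma$ are $\PP$ and $\QQ$. Applying Minkowski's integral inequality in $\L^2([0,1])$ gives
\begin{equation*}
\nr{T_\PP - T_\QQ}_{\L^2([0,1])} \leq \int \nr{Q_\rho - Q_{\tilde\rho}}_{\L^2([0,1])}\,\dd\gamma(\rho,\tilde\rho) = \int \Wass_2(\rho,\tilde\rho)\,\dd\gamma(\rho,\tilde\rho),
\end{equation*}
using once more the one-dimensional identity $\Wass_2(\rho,\tilde\rho) = \nr{Q_\rho - Q_{\tilde\rho}}_{\L^2([0,1])}$. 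Minimizing over $\gamma \in \Gamma(\PP,\QQ)$ (or taking $\gamma$ to be an optimal coupling for $\WWass_1$) then yields the desired bound $\Wass_2(\mu_\PP, \mu_\QQ) \leq \WWass_1(\PP, \QQ)$.

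There is essentially no serious obstacle: the only points requiring a little care are (i) justifying that $T_\PP, T_\QQ$ are well-defined elements of $\L^2([0,1])$, which follows from the second-moment assumption since $\nr{Q_\rho}_{\L^2([0,1])}^2$ equals the second moment of $\rho$, combined with the fact that $\WWass_1(\PP,\QQ) < \infty$ forces $\PP$ and $\QQ$ to share a common integrability scale; and (ii) verifying that a Bochner integral of non-decreasing functions remains non-decreasing, which is immediate from the pointwise monotonicity passing to the limit of Riemann-type sums. The rest is just Minkowski's inequality and the Kantorovich duality choice of optimal coupling.
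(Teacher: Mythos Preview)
Your proposal is correct and matches the paper's approach exactly: the paper does not give a detailed proof but simply states that the result follows ``using this fact [the quantile formula $\mu_\PP = (\int Q_\rho\,\dd\PP(\rho))_\#\lambda_{[0,1]}$] and the triangle inequality,'' and your argument is precisely the fleshed-out version of this, with the triangle inequality being Minkowski's integral inequality in $\L^2([0,1])$.
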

\noindent This fact was exploited in \cite{wass_bar_1d_stats} to characterize the statistical rate of convergence of empirical Wasserstein barycenters towards their population counterpart in an asymptotic setting for probability measures supported over the real line. 

\begin{figure}
    \centering
    \includegraphics[width=.34\textwidth]{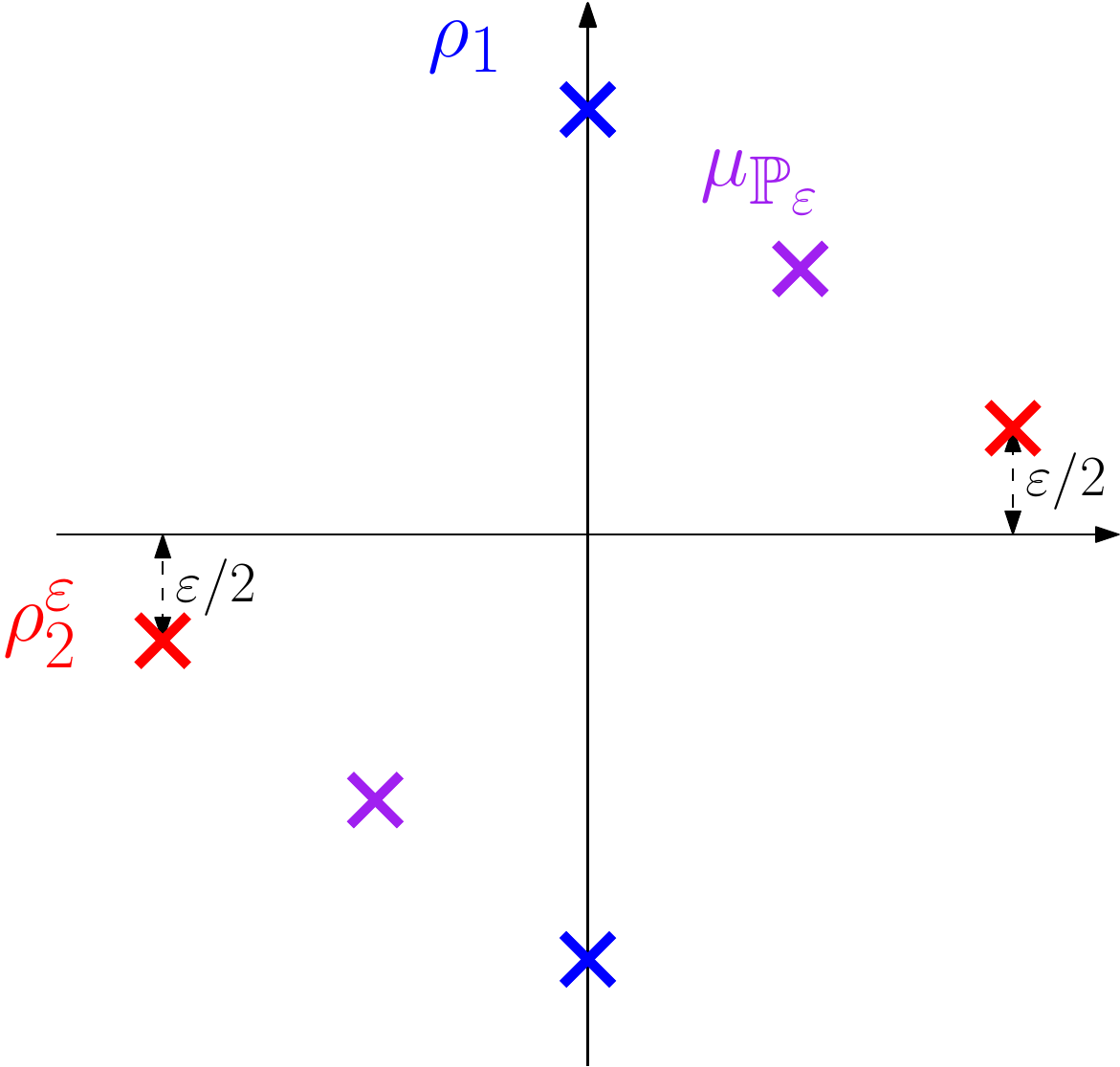}  \hspace{1.5cm}
    \includegraphics[width=.34\textwidth]{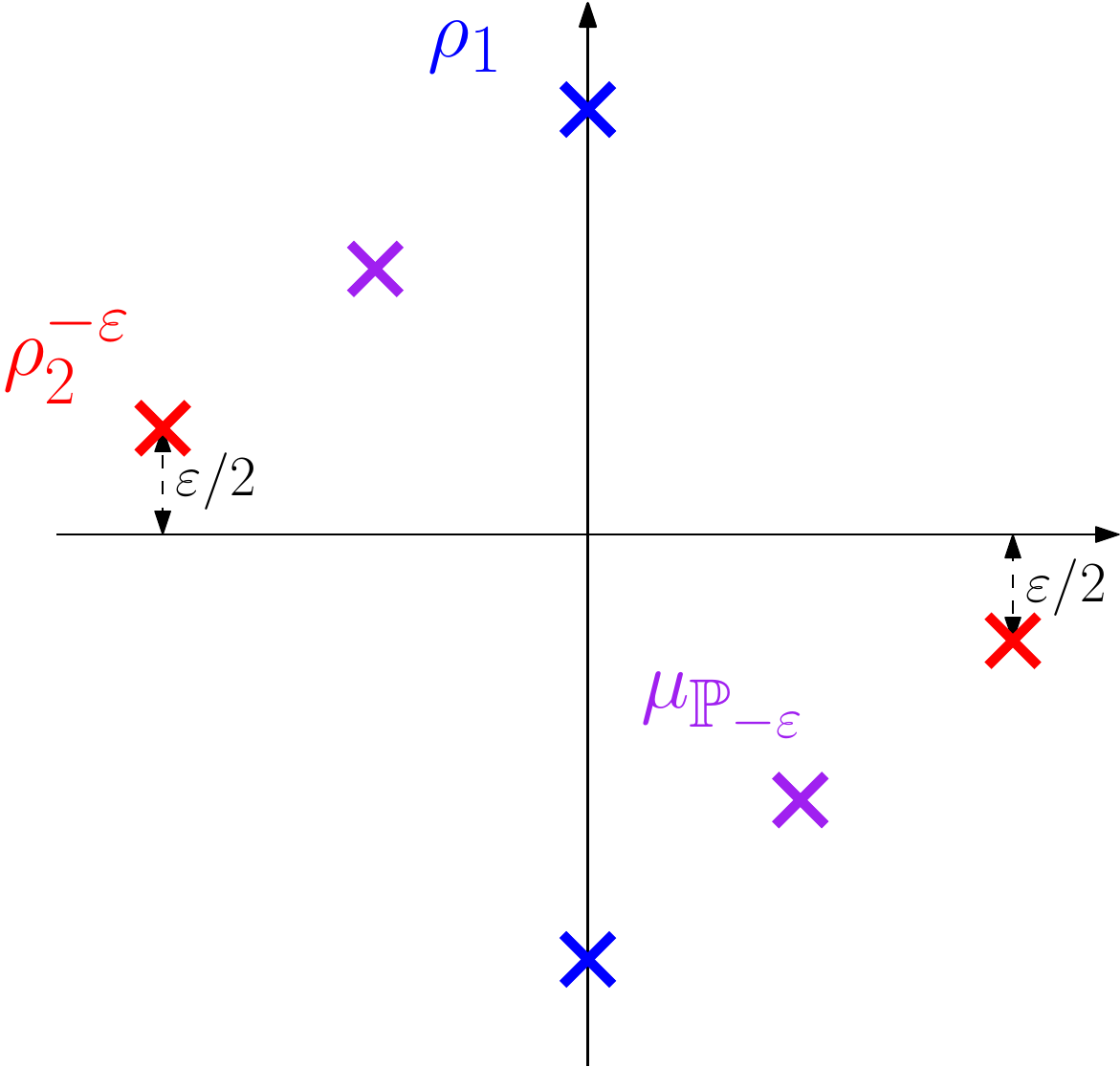} 
    \caption{Let $\rho_1 = \frac{1}{2}(\delta_{(0; 1)} + \delta_{(0; -1)})$. For $\eps > 0$ and $x_\eps = (1; \eps/2) \in \Rsp^2$, let $\rho_2^\eps = \frac{1}{2}(\delta_{x^\eps} + \delta_{-x^\eps})$. Introduce $\PP_\eps = \frac{1}{2} (\delta_{\rho_1} + \delta_{\rho_2^\eps})$. Then for $\eps\leq \frac{1}{2}$, $\Wass_2(\mu_{\PP_\eps}, \mu_{\PP_{-\eps}}) = 1$ while $\WWass_1(\PP_\eps, \PP_{-\eps}) \leq \eps$.}
    \label{fig:no-stability}
\end{figure}

\subsubsection{Quantitative stability in dimension $d \geq 2$.} In dimension $d \geq 2$, the derivation of any quantitative stability bound turns out to be much more difficult. This first comes from the fact that without assumption on $\PP$ and $\QQ$, the barycenters $\mu_\PP$ and $\mu_\QQ$ may not be uniquely defined, which makes hopeless the derivation of any stability result. Even when uniqueness of the barycenters is ensured, one can easily build examples where no quantitative stability bound holds, see for instance the setting illustrated in Figure \ref{fig:no-stability}. This example relies on barycenters with only discrete marginals, and recovers in the limit $\eps = 0$ the pathological case where the barycenter is not uniquely defined. One may circumvent this issue by ensuring, even in the limit $\eps=0$, uniqueness of the barycenter. As mentioned above, this can be done by imposing that some of the marginal measures are absolutely continuous. Nevertheless, even under such an assumption on the marginals, one can easily build an example where the barycenter achieves an Hölder behavior with respect to its marginal, but with an Hölder exponent that can be chosen arbitrarily small, see Figure \ref{fig:holder-bad}. These negative results show that, even in dimension $d=2$, regularity assumptions on the marginals $\PP, \QQ$ that go beyond sole absolute continuity are necessary in order to hope to derive stability estimates for their barycenters.

\begin{figure}
    \centering
    \includegraphics[width=.34\textwidth]{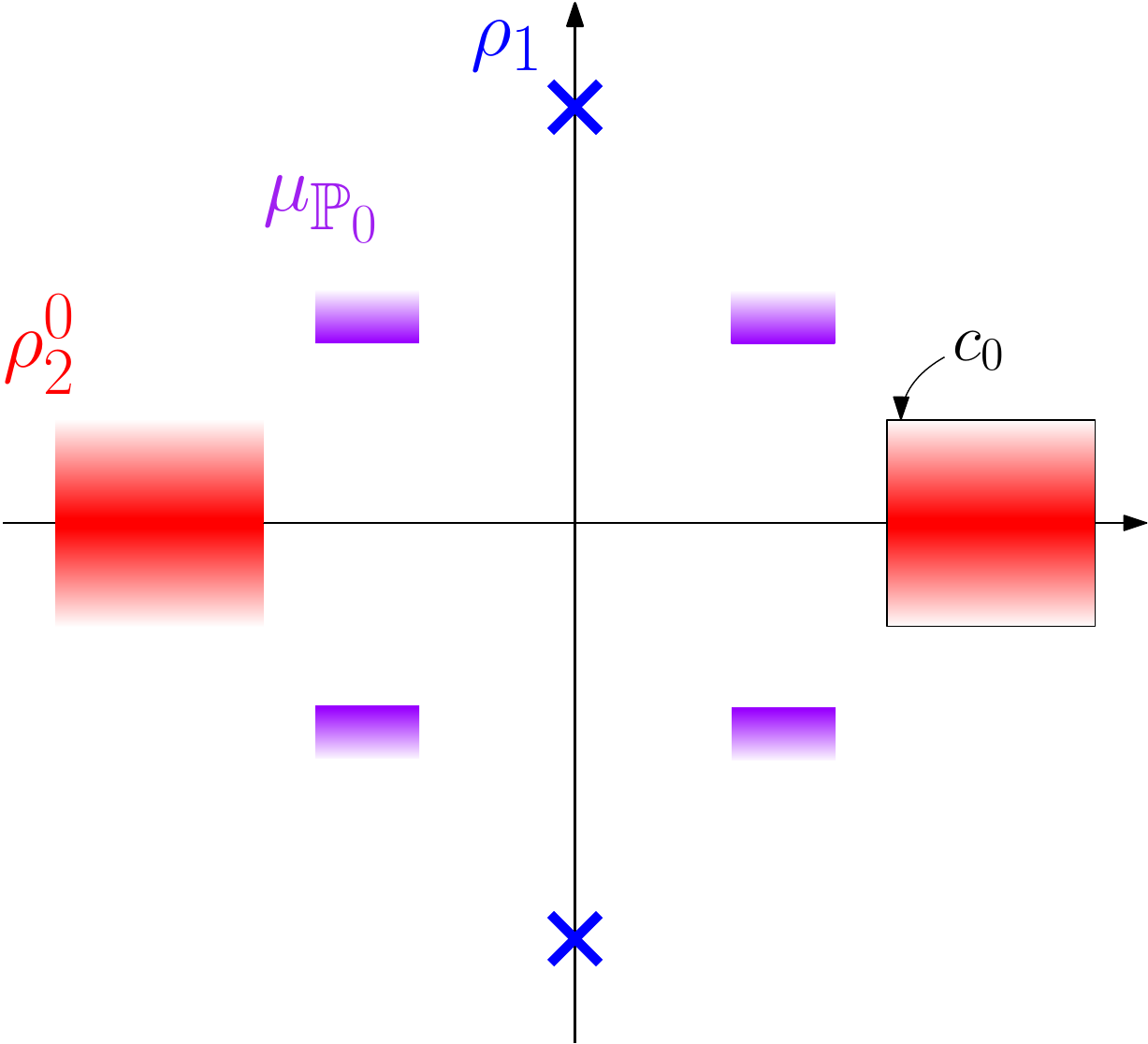}  \hspace{1.5cm}
    \includegraphics[width=.34\textwidth]{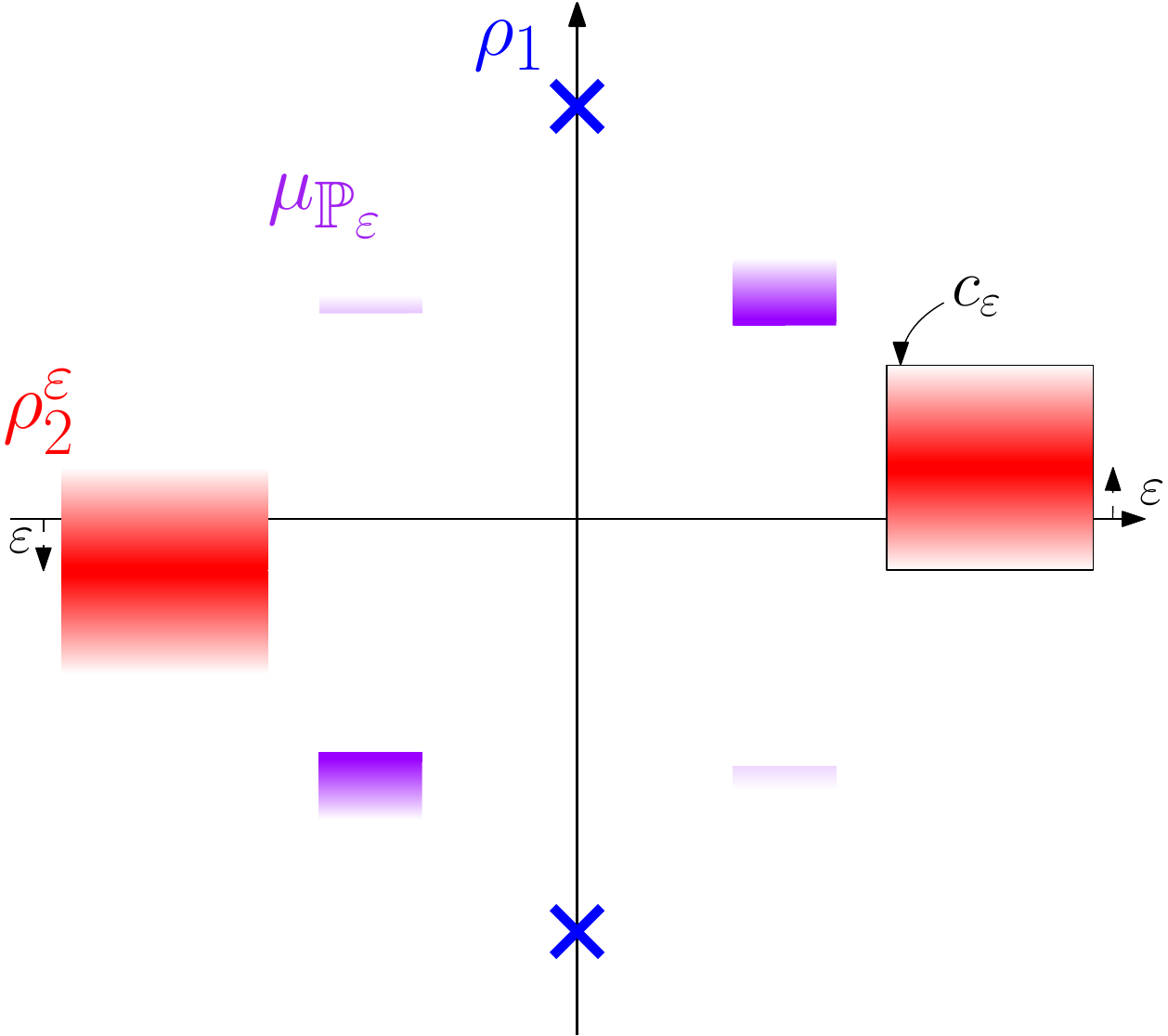} 
    \caption{Let $\rho_1 = \frac{1}{2}(\delta_{(0; 1)} + \delta_{(0; -1)})$. For $a \in (0, 1)$ and $\eps > 0$, let $c_\eps = [1-\frac{a}{2}; 1 + \frac{a}{2}] \times [-\frac{a}{2} + \eps; \frac{a}{2} + \eps]$ and $\rho_2^\eps$ the probability measure with density $\rho_2^\eps(x,y) = \frac{\alpha}{2^{1-2\alpha} a ^{1 + 2\alpha}}\left( \abs{y-\eps}^{2\alpha-1} \mathds{1}_{c_\eps}(x,y) + \abs{y+\eps}^{2\alpha-1} \mathds{1}_{-c_\eps}(x,y) \right)$ for some $\alpha > 0$. Introduce $\PP_\eps = \frac{1}{2} (\delta_{\rho_1} + \delta_{\rho_2^\eps})$. Then for $\eps \leq \frac{a}{2}$, $\Wass_2(\mu_{\PP_0}, \mu_{\PP_\eps}) \sim  \eps^{\alpha}$ while $\WWass_1(\PP_0, \PP_\eps) \leq \eps$.}
    \label{fig:holder-bad}
\end{figure}

\subsubsection{Previous works.} Consistently with the above remarks, previous works having dealt with the stability of Wasserstein barycenters have either worked under stringent assumptions on the marginal measures or regularized the barycenter problem in order to ensure more regular solutions. In \cite{Ahidar-Coutrix2020, LeGouic2022FastCO} for instance, the question of the rate of convergence of the empirical barycenter in a Wasserstein space towards its population counterpart has been answered at the cost of assumptions that require in particular to have guarantees on the regularity of the (unknown) population barycenter (see sub-section \ref{subsec:cv-rate-emp-bar} for more details). In \cite{bigot:hal-01564007, entropic-barycenters}, a regularization of the barycenter problem has been considered and stability bounds and central limit theorems were deduced for the solutions to this regularized problem. In this work, we do not regularize the variance functional and work under less restrictive assumptions on the marginal measures than previous works having dealt with the stability of Wasserstein barycenters. In order to state these assumptions, we first need to introduce the dual problem to the Wasserstein barycenter problem.

\subsection{Dual formulation}
Building from \cite{agueh:hal-00637399}, we show that the Wasserstein barycenter problem admits the following dual formulation with strong duality. The proof of this proposition is deferred to the appendix, Section \ref{sec:dual-formulation}.
\begin{proposition}[Dual formulation]
\label{prop:dual-formulation}
For any $\PP \in \Prob(\Prob(\Omega))$, one has
\begin{equation*}
    \min_{\mu \in \Prob(\Omega)} F_\PP(\mu) = \frac{1}{2} \int_{\Prob(\Omega)}  M_2(\rho) \dd \PP(\rho) - \Dual,
\end{equation*}
where $M_2(\rho) = \sca{\nr{\cdot}^2}{\rho}$ is the second-order moment of $\rho$ and where $\Dual$ corresponds to the \emph{dual value}
\begin{equation*}
    \Dual = \min  \left\{ \int_{\Prob(\Omega)} \sca{\psi^*_\rho}{\rho} \dd \PP(\rho) \mid (\psi_\rho)_\rho \in \L^\infty(\PP; W^{1, \infty}(\Omega)), \quad \int_{\Prob(\Omega)} \psi_\rho(\cdot) \dd \PP(\rho) = \frac{\nr{\cdot}^2}{2} \right\}.
\end{equation*}
In the expression above, $\psi_\rho^*(\cdot) = \sup_{y \in \Omega} \{\sca{\cdot}{y} - \psi_\rho(y)\}$ corresponds to the convex conjugate of $\psi_\rho$ and $\L^\infty(\PP; W^{1, \infty}(\Omega))$ denotes the set of essentially bounded $\PP$-measurable mappings from $\Prob(\Omega)$ to the Sobolev space $W^{1, \infty}(\Omega)$ of bounded Lipschitz continuous functions from $\Omega$ to $\Rsp$.
\end{proposition}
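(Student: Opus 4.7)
I begin with the easy ``$\geq$'' half, namely $\min F_\PP \geq \tfrac{1}{2}\int M_2(\rho)\dd\PP(\rho) - \Dual$. Fix any admissible family $(\psi_\rho)_\rho$ with $\int \psi_\rho \dd\PP = \nr{\cdot}^2/2$, any $\mu \in \Prob(\Omega)$, and an optimal plan $\gamma_\rho \in \Gamma(\rho,\mu)$. Integrating Fenchel--Young $\psi_\rho^*(x)+\psi_\rho(y)\geq \sca{x}{y}$ against $\gamma_\rho$ gives
\[
\sca{\psi_\rho^*}{\rho} + \sca{\psi_\rho}{\mu} \geq \tfrac{1}{2}M_2(\rho) + \tfrac{1}{2}M_2(\mu) - \tfrac{1}{2}\Wass_2^2(\rho,\mu).
\]
I then integrate against $\PP$ and use the constraint in the form $\int \sca{\psi_\rho}{\mu}\dd\PP(\rho) = \sca{\int \psi_\rho \dd\PP}{\mu} = M_2(\mu)/2$; this yields $\int \sca{\psi_\rho^*}{\rho}\dd\PP \geq \tfrac{1}{2}\int M_2(\rho)\dd\PP - F_\PP(\mu)$. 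Taking the infimum over admissible $(\psi_\rho)$ and the minimum over $\mu$ proves the bound.

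\textbf{Strong duality.} For the matching inequality, my plan is to reduce to the finitely supported case $\PP_N = \sum_{i=1}^N \lambda_i \delta_{\rho_i}$, where the statement collapses to Proposition~2.3 of Agueh--Carlier and yields a dual minimizer $(\psi_{\rho_i}^N)_i \in W^{1,\infty}(\Omega)^N$ with $\sum_i \lambda_i \psi_{\rho_i}^N = \nr{\cdot}^2/2$, and then to pass to the limit along $\PP_N \to \PP$ in $\WWass_1$. Convergence of primal values $\min F_{\PP_N} \to \min F_\PP$ follows from the Le Gouic--Loubes consistency theorem recalled above combined with the weak continuity of $M_2$ on the compact space $\Prob(\Omega)$. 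On the dual side, the compactness $\Omega = B(0,R)$ provides uniform $W^{1,\infty}$ bounds on normalized Brenier potentials ($2R$-Lipschitz with oscillation at most $2R^2$), so that Arzel\`a--Ascoli gives relative compactness of each $\psi_{\rho_i}^N$ in $C(\Omega)$. Extracting a limit family $(\psi_\rho)_\rho$, the affine constraint $\int \psi_\rho \dd\PP = \nr{\cdot}^2/2$ survives the $\WWass_1$-limit, and by Fatou/continuity its dual cost upper-bounds $\tfrac{1}{2}\int M_2 \dd\PP - \min F_\PP$, giving both equality and attainment of the $\min$ in $\Dual$.

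\textbf{Main obstacle.} The delicate step is converting the discrete dual minimizers $(\psi_{\rho_i}^N)_{i=1}^N$ into a single measurable family $(\psi_\rho)_\rho \in \L^\infty(\PP; W^{1,\infty}(\Omega))$ that is admissible for $\PP$. The strategy I would follow is to choose the approximations $\PP_N$ as push-forwards of $\PP$ by Borel projections onto finite $1/N$-nets of $\spt\PP \subset (\Prob(\Omega),\Wass_2)$, so that each $\rho \in \spt\PP$ comes with a canonical representative $\rho^N_{i(\rho)}$ and hence a candidate potential $\psi_\rho^N := \psi_{\rho^N_{i(\rho)}}^N$. A diagonal extraction combined with a Kuratowski--Ryll-Nardzewski measurable selection, applied to the closed convex-valued map of equi-Lipschitz normalized potentials, then delivers the required limit $(\psi_\rho)_\rho$ in $\L^\infty(\PP; W^{1,\infty}(\Omega))$; the uniform bounds control the integrand $\sca{\psi_\rho^*}{\rho}$ so that dominated convergence transfers the dual cost to the limit. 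The same compactness guarantees that $\Dual$ is indeed attained, justifying the use of ``$\min$'' in its definition.
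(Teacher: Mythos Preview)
Your weak duality argument is fine, and the reduction to finitely supported $\PP_N$ via Agueh--Carlier is a natural route. But the approach is different from the paper's, and the step you flag as the ``main obstacle'' is where your argument has a real gap.

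The paper does \emph{not} approximate $\PP$ by discrete measures. It obtains strong duality in one shot by a Fenchel argument: defining $H(\varphi)=\inf\{-\int\sca{\phi_\rho^c}{\rho}\dd\PP : \int\phi_\rho\dd\PP=\varphi\}$ over $\L^1(\PP;\Class(\Omega))$, computing $H^*(\mu)=F_\PP(\mu)$ via Kantorovich duality, and checking that $H$ is convex and bounded above near $0$ so that $H(0)=H^{**}(0)$. Existence of a minimizer is then handled by viewing $(\rho,x)\mapsto\psi_\rho(x)$ as a single element of $\L^2(\PP\otimes\lambda)$, extracting a weak limit by Banach--Alaoglu, upgrading to strong convergence by Mazur, and finally using Arzel\`a--Ascoli $\rho$-wise on a further $\PP$-a.e.\ subsequence. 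The point is that the whole family is compactified at once in a function space over $\Prob(\Omega)\times\Omega$.

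Your proposal instead tries to extract $\rho$-by-$\rho$: diagonal extraction plus Kuratowski--Ryll-Nardzewski on the set-valued map $\rho\mapsto\{\text{cluster points of }(\psi_\rho^N)_N\}$. This does not work as written. Diagonal extraction cannot produce a single subsequence along which $\psi_\rho^N\to\psi_\rho$ for \emph{all} (or even $\PP$-a.e.) $\rho$, since there are uncountably many of them and $\rho\mapsto\psi_\rho^N$ is not continuous. And if you merely measurably \emph{select} a cluster point $\psi_\rho$ for each $\rho$ independently, there is no reason the resulting family should satisfy the affine constraint $\int\psi_\rho\dd\PP=\nr{\cdot}^2/2$ or recover the limiting dual cost: both are \emph{integrals over $\rho$}, and they are only preserved under convergence along a common (sub)sequence, not under arbitrary $\rho$-dependent selections. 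The fix is precisely the paper's device---embed the families $(\psi_\rho^N)_\rho$ into $\L^2(\PP\otimes\lambda)$ and use weak compactness there---but once you do that, the discrete approximation is superfluous and you have essentially reproduced the paper's existence proof.
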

\begin{remark}
Note that in the above minimization problem, $(\psi_\rho)_\rho$ is to be understood as the following mapping, defined $\PP$-almost everywhere:
\begin{equation*}
    (\psi_\rho)_\rho : \left\{
    \begin{array}{ll}
        \Prob(\Omega) &\to W^{1, \infty}(\Omega), \\
        \rho &\mapsto \psi_\rho.
    \end{array}
\right.
\end{equation*}
\end{remark}
\begin{remark}
\label{rk:kantorovich-brenier-potentials}
By Kantorovich duality \cite{villani2008optimal}, for $\PP \in \Prob(\Prob(\Omega))$, the \emph{collection of functions} $(\psi_\rho)_\rho$ solving $\Dual$ gives solutions to the optimal transport problems between $\PP$-a.e. $\rho \in \Prob(\Omega)$ and any barycenter $\mu_\PP \in \arg\min \Primal$: 
\begin{align}
\label{eq:ot-pb-simple}
    \frac{1}{2} \Wass_2^2(\rho, \mu_\PP) &= \frac{1}{2} M_2(\rho) +  \frac{1}{2} M_2(\mu_\PP) - \left( \sca{\psi_\rho^*}{\rho} + \sca{\psi_\rho}{\mu_\PP} \right) \notag \\
    &= \frac{1}{2} M_2(\rho) +  \frac{1}{2} M_2(\mu_\PP) - \left( \min_{\psi \in \Class(\Omega)} \sca{\psi^*}{\rho} + \sca{\psi}{\mu_\PP} \right).
\end{align}
As such, $\psi_\rho = \psi_\rho^{**}$ for $\PP$-a.e. $\rho$, so that this function -- that we call later on a (Kantorovich) potential -- is convex and Lipschitz continuous with Lipschitz constant smaller than $R$. When $\PP(\Prob_{a.c.}(\Omega))>0$ and $\rho \in \spt(\PP) \cap \Prob_{a.c.}(\Omega)$, the convex function $\psi_\rho^*$ is the Brenier potential \cite{Brenier} and its gradients achieves the optimal transport from $\rho$ to the unique barycenter $\mu_\PP$:
\begin{equation*}
    \left( \nabla \psi^*_\rho\right)_\# \rho = \mu_\PP, \quad \text{and} \quad \Wass_2^2(\rho, \mu_\PP) = \nr{\nabla \psi_\rho^* - \id}^2_{\L^2(\rho; \Rsp^d)}.
\end{equation*}
\end{remark}

\subsection{Assumptions}
For any $\PP \in \Prob(\Prob(\Omega))$, the variance functional $F_\PP$ is convex. Stability estimates for the minimizers of $F_\PP$ (which are the Wasserstein barycenters of $\PP$) may thus be obtained from estimates on the \emph{strong convexity} or \emph{curvature} of $F_\PP$. However, without any assumption on $\PP$, the variance functional $F_\PP$ is in general not \emph{strongly-convex} in any sense. In fact, it is easy to construct examples where $F_\PP$ showcases an affine behavior with respect to the linear structure of $\Prob(\Omega)$:
\begin{example}
For any $\PP \in \Prob(\Prob(\Omega))$ of the form $\PP = \sum_i \lambda_i \delta_{\delta_{x_i}}$, one has for any $y, z \in \Omega$ and $t \in [0,1]$ the relation $$ F_\PP((1-t) \delta_{y} + t \delta_{z}) = (1-t)F_\PP(\delta_{y}) + t F_\PP(\delta_{z}). $$
\end{example}
Our main stability applies to measures $\PP\in\Prob(\Prob(\Omega))$ such that the variance functional $F_\PP$ satisfies a strong convexity estimate, also called a \emph{variance inequality} in the language of \cite{sturm-03, pmlr-v125-chewi20a}. Because for any $\PP \in \Prob(\Prob(\Omega))$ we have $$ F_\PP(\cdot) = \frac{1}{2}\int_{\Prob(\Omega)} \Wass_2^2(\rho, \cdot) \dd \PP(\rho),$$
it suffices to choose a $\PP$ whcih gives positive mass to probability measures $\rho \in \Prob(\Omega)$ for which the squared Wasserstein distance $\frac{1}{2} \Wass_2^2(\rho, \cdot)$ satisfies itself a strong convexity estimate. In turn, relying on Kantorovich's dual formulation displayed in \eqref{eq:ot-pb-simple}, this can be obtained from the assumption that the minimized functional in the dual problem presents a form of local strong convexity. We will denote  $\Kant_\rho : \psi \mapsto \sca{\psi^*}{\rho}$ the \emph{Kantorovich functional} associated to $\rho \in \Prob(\Omega)$. This convex functional appears in the minimization problem \eqref{eq:ot-pb-simple}; its gradient formally reads $\nabla \Kant_\rho(\psi) = -(\nabla \psi^*)_\# \rho$. We will make the following assumption:
\begin{assumption}
\label{assump:reg-rho}
There exists constants $\alpha_\PP \in (0, 1]$, $c_\PP, \per_\PP, m_\PP, M_\PP \in (0, +\infty)$ and a measurable set $S_\PP \subset \Prob(\Omega)$ verifying $\PP(S_\PP) = \alpha_\PP$ and such that for all $\rho \in S_\PP$,
\begin{enumerate}
    \item \label{it:ac} $\rho \in \Prob_{a.c.}(\Omega)$,
    \item \label{it:bornes-densite} $m_\PP \leq \rho_{\vert \spt(\rho)} \leq M_\PP$,
    \item \label{it:perimetre} $\spt(\rho)$ has a $\Haus^{d-1}$-rectifiable boundary and $\mathcal{H}^{d-1}(\partial \spt(\rho)) \leq \per_\PP$,
    \item \label{it:ft-convexite} $\forall \psi, \tilde{\psi} \in \Class(\Omega), \quad c_\PP \Var_{\rho}(\tilde{\psi}^* - \psi^*) \leq \Kant_\rho(\tilde{\psi}) - \Kant_\rho(\psi) - \sca{\psi - \tilde{\psi}}{(\nabla \psi^*)_\# \rho}$,
\end{enumerate}
where $\spt(\rho)$ denotes the support of $\rho$, $\partial \spt(\rho)$ denotes the topological boundary of this support and $\mathcal{H}^{d-1}$ denotes the $(d-1)$-dimensional Hausdorff measure.
\end{assumption}

While conditions \eqref{it:ac}, \eqref{it:bornes-densite} and \eqref{it:perimetre} speak for themselves, condition \eqref{it:ft-convexite} might seem ad hoc and difficult to verify. However, conditions under which a measure $\rho \in \Prob_{a.c.}(\Omega)$ verifies the local strong convexity estimate \eqref{it:ft-convexite} of Assumption \ref{assump:reg-rho} are given in \cite{delalande:hal-03164147} as a consequence of the Brascamp-Lieb concentration inequality \cite{brascamp-lieb}. In particular, this estimate holds for an absolutely continuous measure $\rho$, supported on a compact convex set, and whose density is bounded away from zero and infinity. In the appendix, we slightly extend this result to measures supported on a connected union of convex sets, thus showing that the convexity of the support of $\rho$ is not absolutely necessary to get strong convexity of $\Kant_\rho$.
\begin{proposition}
Let $\rho \in \Prob_{a.c.}(\Omega)$ and assume that there exists $m_\rho, M_\rho \in (0, +\infty)$ such that $m_\rho \leq \rho \leq M_\rho$ on $\spt(\rho)$. Assume in addition that $\rho$ satisfies a Poincaré-Wirtinger inequality and that $\spt(\rho)$ is a connected finite union of convex sets. Then there exists $c_\rho > 0$ such that for all $\psi, \tilde{\psi} \in \Class(\Omega),$
\begin{equation*}
    c_\rho \Var_{\rho}(\tilde{\psi}^* - \psi^*) \leq \Kant_\rho(\tilde{\psi}) - \Kant_\rho(\psi) - \sca{\tilde{\psi} - \psi}{\nabla \Kant_\rho(\psi)}.
\end{equation*}
\end{proposition}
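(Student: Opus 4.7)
The plan is to factor the desired inequality into two successive steps that isolate the role of the convex-union structure of $\spt(\rho)$ from that of the assumed Poincaré-Wirtinger inequality. The intermediate target is the gradient-level estimate
\begin{equation*}
\Kant_\rho(\tilde{\psi}) - \Kant_\rho(\psi) - \sca{\tilde{\psi}-\psi}{\nabla \Kant_\rho(\psi)} \geq c_1 \nr{\nabla\tilde{\psi}^* - \nabla\psi^*}^2_{\L^2(\rho;\Rsp^d)},
\end{equation*}
which is precisely the core of the argument of \cite{delalande:hal-03164147}: on a convex support with density bounded away from zero and infinity, one then closes the proof by applying a Brascamp-Lieb-based Poincaré-Wirtinger estimate to $f = \tilde{\psi}^* - \psi^*$. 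My plan is to establish the gradient-level estimate under the weaker hypothesis of a convex-union support, and then invoke the assumed Poincaré-Wirtinger inequality to conclude.

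To obtain the gradient-level estimate, write $\spt(\rho) = K_1 \cup \cdots \cup K_N$ with each $K_i$ convex compact. For each $i$, since $K_i \subset \spt(\rho)$, the sub-probability measure $\rho \mathds{1}_{K_i}$ has density in $[m_\rho, M_\rho]$ on the convex set $K_i$; rescaling so as to apply the gradient-level estimate of \cite{delalande:hal-03164147} to the probability measure $\rho \mathds{1}_{K_i}/\rho(K_i)$ yields constants $c_i > 0$ (depending only on $m_\rho, M_\rho, R$ and on $K_i$) such that
\begin{equation*}
\Kant_{\rho \mathds{1}_{K_i}}(\tilde{\psi}) - \Kant_{\rho \mathds{1}_{K_i}}(\psi) - \sca{\tilde{\psi}-\psi}{\nabla \Kant_{\rho \mathds{1}_{K_i}}(\psi)} \geq c_i \int_{K_i} \nr{\nabla\tilde{\psi}^*(x) - \nabla\psi^*(x)}^2 \dd\rho(x).
\end{equation*}
These local inequalities glue back together via a bounded-overlap argument. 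Since every point of $\spt(\rho)$ lies in between $1$ and $N$ of the $K_i$'s, summing over $i$ and using the pointwise nonnegativity of the Bregman integrand yields on the left at most $N$ times the global Bregman divergence of $\rho$, while on the right at least $(\min_i c_i)\nr{\nabla\tilde{\psi}^* - \nabla\psi^*}^2_{\L^2(\rho;\Rsp^d)}$; this gives the desired gradient-level estimate for $\rho$ with $c_1 = (\min_i c_i)/N$.

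Applying the assumed Poincaré-Wirtinger inequality for $\rho$ to the Lipschitz function $f = \tilde{\psi}^* - \psi^*$ (Lipschitz because $\psi, \tilde{\psi}$ belong to $\Class(\Omega)$, so that their convex conjugates are $R$-Lipschitz) then yields $\Var_\rho(f) \leq C_{PW}\nr{\nabla f}^2_{\L^2(\rho;\Rsp^d)}$, which chained with the gradient-level estimate produces the claim with $c_\rho = c_1/C_{PW} > 0$. The hard part will be cleanly extracting the gradient-level estimate from the proof of \cite{delalande:hal-03164147}, where it is likely intertwined with the subsequent Poincaré-Wirtinger step via the Brascamp-Lieb inequality: one needs to verify that the Brascamp-Lieb portion of Delalande's argument indeed gives the Bregman-to-$\L^2$-gradient lower bound independently of the variance bound that follows it. Once this decoupling is made explicit, the localization-and-gluing above is routine, and the connectedness of $\spt(\rho)$ in fact enters only implicitly, through the very existence of the Poincaré-Wirtinger constant.
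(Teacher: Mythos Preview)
Your plan hinges on a ``gradient-level estimate'' of the form
\[
\Kant_\rho(\tilde{\psi}) - \Kant_\rho(\psi) - \sca{\tilde{\psi}-\psi}{\nabla \Kant_\rho(\psi)} \;\geq\; c\,\nr{\nabla\tilde{\psi}^* - \nabla\psi^*}^2_{\L^2(\rho;\Rsp^d)}
\]
with $c>0$ independent of $\psi,\tilde{\psi}$, to be applied on each convex piece. This estimate is \emph{false}, even on a single convex piece, so the decoupling you hope to extract from \cite{delalande:hal-03164147} cannot exist. Here is a one-dimensional counterexample: take $\Omega=[-R,R]$, $\rho$ uniform on $[0,1]$, $\psi(y)=0$ and $\tilde{\psi}(y)=\delta y$ for small $\delta>0$. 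Then $\psi^*(x)=R|x|$, $\tilde{\psi}^*(x)=R|x-\delta|$, so $\nabla\psi^*(x)=R$ on $(0,1)$ while $\nabla\tilde{\psi}^*(x)=-R$ on $(0,\delta)$ and $R$ on $(\delta,1)$. One computes directly that the Bregman divergence equals $R\delta^2$ whereas $\nr{\nabla\tilde{\psi}^*-\nabla\psi^*}^2_{\L^2(\rho)}=4R^2\delta$; the ratio is $\delta/(4R)\to 0$. (By contrast, $\Var_\rho(\tilde{\psi}^*-\psi^*)\sim \tfrac{4}{3}R^2\delta^3$, so the variance inequality survives.) The reason is structural: in the second-derivative formula the integrand is $\sca{\nabla v\circ\nabla(\psi^t)^*}{\DD^2(\psi^t)^*\,\nabla v\circ\nabla(\psi^t)^*}$, and Brascamp--Lieb converts this \emph{Hessian-weighted} quadratic form directly into a variance; there is no intermediate step controlling an unweighted $\L^2$ gradient, and indeed $\DD^2(\psi^t)^*$ can degenerate.

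The paper proceeds differently: it keeps the variance as the intermediate quantity throughout. On each convex piece $C_i$ it applies the convex-support result to obtain a local bound $\text{Bregman}_{\rho_i}\gtrsim\Var_{\rho_i}(\tilde{\psi}^*-\psi^*)$, and then glues these local variances into the global $\Var_\rho$ via a weighted graph Laplacian on the intersection graph of the $C_i$'s (connectedness of $\spt(\rho)$ makes the second eigenvalue $\lambda_2(L)$ positive). The Poincar\'e--Wirtinger constant enters only indirectly, through a comparison with $\lambda_2(L)$. So the missing idea in your proposal is precisely this variance-gluing step; the global Poincar\'e--Wirtinger inequality cannot replace it because you never have access to the unweighted gradient norm.
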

We refer to Proposition \ref{prop:strong-convexity-kanto-func-2} of the appendix for a precise statement and a proof.
We conjecture that such a strong convexity estimate actually holds for any absolutely continuous measure
satisfying the Poincaré-Wirtinger inequality, maybe with mild additional assumptions on the density and its support. However, this is not the focus of the present article and we leave this for future work. On a more technical side, we note that the Borel measurability of a set $S_\PP \subset \Prob(\Omega)$ as defined in Assumption \ref{assump:reg-rho} needs to be checked depending on the application. Obviously, measurability holds when the number of marginals is finite ($\PP$ is discrete) and $S_\PP$ is a (finite) subset of these marginals. When the number of marginals is not finite, we note that if $S_\PP$ is made of \emph{all} the measures $\rho \in \Prob(\Omega)$ that satisfy conditions \eqref{it:ac}--\eqref{it:perimetre} and that have convex supports, then the measures of $S_\PP$ satisfy condition \eqref{it:ft-convexite} by \cite{delalande:hal-03164147} and the set $S_\PP$ is closed for the weak topology.

\subsection{Main result and consequences}

\noindent Under Assumption \ref{assump:reg-rho}, we prove that the Wasserstein barycenters depend in a Hölder-continuous way on their marginals:
\begin{theorem}
\label{th:stab-bar}
Let $\PP , \QQ \in \Prob(\Prob(\Omega))$ and assume that $\PP$ satisfies Assumption \ref{assump:reg-rho}. Let $\mu_\PP$ be the barycenter of $\PP$ and $\mu_\QQ$ be a barycenter of $\QQ$. Then
\begin{align*}
    \Wass_2(\mu_\PP, \mu_\QQ) \leq \left( \frac{C_{d, m_\PP, M_\PP, \per_\PP, c_\PP} }{\alpha_\PP} \right)^{1/6} \WWass_1(\PP, \QQ)^{1/6},
\end{align*}
where $ C_{d, m_\PP, M_\PP, \per_\PP, c_\PP} = C_d \frac{M_\PP^3 }{m_\PP} \frac{\per_\PP^2}{c_\PP} R^{5}$ and where $C_d$ is a dimensional constant. It also holds, with the same constant:
\begin{align*}
    \Wass_2(\mu_\PP, \mu_\QQ) \leq \left( \frac{C_{d, m_\PP, M_\PP, \per_\PP, c_\PP} }{\alpha_\PP} \right)^{1/5} \nr{\PP - \QQ}_\TV^{1/5}.
\end{align*}
\end{theorem}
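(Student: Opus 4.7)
The plan is to sandwich the suboptimality $F_\PP(\mu_\QQ) - F_\PP(\mu_\PP) \geq 0$ between an upper bound in terms of $\WWass_1(\PP, \QQ)$ (or $\|\PP - \QQ\|_\TV$) and a lower bound proportional to a positive power of $\Wass_2(\mu_\PP, \mu_\QQ)$; chaining the two will then yield the announced Hölder estimates. For the upper bound I would use that for every $\mu \in \Prob(\Omega)$ the map $\rho \mapsto \tfrac{1}{2}\Wass_2^2(\rho, \mu)$ is $2R$-Lipschitz on $(\Prob(\Omega), \Wass_2)$ and that the integrand $\tfrac{1}{2}[\Wass_2^2(\rho, \mu_\QQ) - \Wass_2^2(\rho, \mu_\PP)]$ is moreover uniformly bounded by $2R\,\Wass_2(\mu_\PP, \mu_\QQ)$ (by expanding $a^2 - b^2 = (a-b)(a+b)$). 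Combining these two facts with $F_\QQ(\mu_\QQ) \leq F_\QQ(\mu_\PP)$ and Kantorovich-Rubinstein duality produces
\begin{equation*}
    F_\PP(\mu_\QQ) - F_\PP(\mu_\PP) \leq [F_\PP - F_\QQ](\mu_\QQ) - [F_\PP - F_\QQ](\mu_\PP) \leq \min\bigl\{4R\,\WWass_1(\PP, \QQ),\; 2R\,\Wass_2(\mu_\PP, \mu_\QQ)\,\|\PP - \QQ\|_\TV\bigr\}.
\end{equation*}

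For the lower bound, let $(\psi_\rho)_\rho$ and $(\tilde\psi_\rho)_\rho$ be the optimal dual collections of Proposition \ref{prop:dual-formulation} associated with $\PP$ and $\QQ$ respectively, so that $(\nabla \psi_\rho^*)_\# \rho = \mu_\PP$ for $\PP$-a.e.\ absolutely continuous $\rho$, $(\nabla \tilde\psi_\rho^*)_\# \rho = \mu_\QQ$ $\QQ$-a.e., and the normalisations $\int \psi_\rho \dd\PP = \int \tilde\psi_\rho \dd\QQ = \|\cdot\|^2/2$ hold. For each $\rho \in S_\PP$, condition \eqref{it:ft-convexite} of Assumption \ref{assump:reg-rho} applied with base $\tilde\psi_\rho$ and test $\psi_\rho$, together with $(\nabla\tilde\psi_\rho^*)_\#\rho = \mu_\QQ$, gives
\begin{equation*}
    c_\PP \Var_\rho(\psi_\rho^* - \tilde\psi_\rho^*) \leq \sca{\psi_\rho^*}{\rho} + \sca{\psi_\rho}{\mu_\QQ} - \sca{\tilde\psi_\rho^*}{\rho} - \sca{\tilde\psi_\rho}{\mu_\QQ}.
\end{equation*}
The right-hand side is a non-negative Kantorovich duality gap (dual value of the suboptimal pair $(\psi_\rho, \psi_\rho^*)$ for OT between $\rho$ and $\mu_\QQ$ minus that of the optimal pair), so dropping the restriction to $S_\PP$ only enlarges its $\PP$-integral. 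Integrating and telescoping using the normalisation $\int \psi_\rho \dd\PP = \|\cdot\|^2/2$, the optimality identity $\sca{\tilde\psi_\rho^*}{\rho} + \sca{\tilde\psi_\rho}{\mu_\QQ} = \tfrac{1}{2}M_2(\rho) + \tfrac{1}{2}M_2(\mu_\QQ) - \tfrac{1}{2}\Wass_2^2(\rho, \mu_\QQ)$, and the dual formulation of $F_\PP(\mu_\PP)$ from Proposition \ref{prop:dual-formulation} makes all spurious terms cancel, yielding the strong-convexity-type estimate
\begin{equation*}
    c_\PP \int_{S_\PP} \Var_\rho(\psi_\rho^* - \tilde\psi_\rho^*) \dd\PP(\rho) \leq F_\PP(\mu_\QQ) - F_\PP(\mu_\PP).
\end{equation*}

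It remains to pass from the average variance to $\Wass_2(\mu_\PP, \mu_\QQ)$. Set $u_\rho := \tilde\psi_\rho^* - \psi_\rho^*$; since $(\nabla \psi_\rho^*, \nabla \tilde\psi_\rho^*)_\# \rho$ is an admissible coupling between $\mu_\PP$ and $\mu_\QQ$, Jensen gives $\Wass_2^2(\mu_\PP, \mu_\QQ) \leq \|\nabla u_\rho\|_{\L^2(\rho)}^2$ for every $\rho \in S_\PP$. The core technical step is a pointwise interpolation
\begin{equation*}
    \|\nabla u_\rho\|_{\L^2(\rho)}^2 \leq C_d \, \frac{M_\PP \, \per_\PP^{2/3}}{m_\PP^{1/3}} \, R^{4/3} \, \Var_\rho(u_\rho)^{1/3},
\end{equation*}
which I would derive by combining (i) an integration-by-parts bound $\|\nabla u_\rho\|_{\L^2(\rho)}^2 \leq 4 M_\PP R \per_\PP \|u_\rho\|_\infty$ exploiting that $\Delta \psi_\rho^*, \Delta \tilde\psi_\rho^*$ are non-negative measures with mass on $\spt \rho$ bounded by $R \per_\PP$ (divergence theorem applied to the Brenier maps, which take values in $\Omega$), together with the $\Haus^{d-1}$-rectifiability of $\partial \spt \rho$ to control the boundary term; and (ii) a Morrey-type $L^\infty$ bound $\|u_\rho\|_\infty^3 \lesssim R\,\Var_\rho(u_\rho)/(m_\PP \per_\PP)$ obtained from a tent argument at a maximiser of $|u_\rho|$, relying on the density lower bound $m_\PP$ and the perimeter control to lower-bound the $\rho$-mass of the tent even near $\partial \spt \rho$. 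Cubing the pointwise bound $\Wass_2^6(\mu_\PP, \mu_\QQ) \lesssim C_d^{3}\cdots \Var_\rho(u_\rho)$ and integrating over $S_\PP$ against $\PP$ produces $\alpha_\PP \Wass_2^6(\mu_\PP, \mu_\QQ) \lesssim \WWass_1(\PP, \QQ)$; the analogous chaining with the TV upper bound, which carries the extra factor $\Wass_2(\mu_\PP, \mu_\QQ)$ on the right-hand side (one power of which can be absorbed into the left), yields $\alpha_\PP \Wass_2^5(\mu_\PP, \mu_\QQ) \lesssim \|\PP - \QQ\|_\TV$. The main technical obstacle is clearly step (ii): producing the dimension-independent exponent $1/3$ and the precise $m_\PP^{-1}$, $\per_\PP^{-1}$ and $R$ scaling from the tent construction requires careful treatment of the case where the maximiser of $|u_\rho|$ lies close to $\partial \spt \rho$, where the naive ball-packing estimate degenerates and the perimeter assumption has to be used to recover a usable lower bound on the local $\rho$-mass.
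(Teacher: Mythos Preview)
Your overall architecture is exactly that of the paper: bound the suboptimality $F_\PP(\mu_\QQ)-F_\PP(\mu_\PP)$ from above by $\WWass_1$ (via Kantorovich--Rubinstein) or by $\Wass_2(\mu_\PP,\mu_\QQ)\|\PP-\QQ\|_\TV$, and from below by a power of $\Wass_2(\mu_\PP,\mu_\QQ)$ using Assumption~\ref{assump:reg-rho}\eqref{it:ft-convexite} plus an interpolation from $\Var_\rho(\psi_\rho^*-\tilde\psi_\rho^*)$ to $\|\nabla\psi_\rho^*-\nabla\tilde\psi_\rho^*\|_{\L^2(\rho)}^2$. Your step (i) (integration by parts exploiting $\Delta\psi_\rho^*\geq 0$) is correct and is indeed one half of the argument. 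Two issues remain, one minor and one genuine.

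\emph{Minor.} You define $(\tilde\psi_\rho)_\rho$ as the dual collection for $\QQ$, which is only defined $\QQ$-a.e.; you then evaluate it for $\rho\in S_\PP$, a $\PP$-positive set possibly disjoint from $\spt\QQ$. The fix is immediate: for $\rho\in S_\PP$ take $\tilde\psi_\rho$ to be \emph{any} Kantorovich potential in the transport from $\rho$ to $\mu_\QQ$ (this is what the paper does in Proposition~\ref{prop:strong-convexity-squared-W2}); the $\QQ$-dual is never needed.

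\emph{Genuine gap.} Your step (ii), the claimed Morrey-type bound $\|u_\rho\|_\infty^3\lesssim R\,\Var_\rho(u_\rho)/(m_\PP\,\per_\PP)$, is \emph{false} for $d\geq 2$, and no tent argument can rescue it. Take $K=B(0,1)\subset\Rsp^2$, $\rho$ uniform, $\psi^*(x)=R|x|$, $\tilde\psi^*(x)=\max(R|x|,h)$ for small $h>0$; both are convex and $R$-Lipschitz, and $u_\rho(x)=\max(0,h-R|x|)$ is exactly a tent. One computes $\|u_\rho\|_\infty=h$ while $\Var_\rho(u_\rho)\sim h^4/R^2$, so $\|u_\rho\|_\infty^3/\Var_\rho(u_\rho)\sim R^2/h\to\infty$ as $h\to 0$. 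The difficulty is not a boundary effect: this example is interior. What the tent argument \emph{does} give is $\|u_\rho\|_\infty^{d+2}\lesssim R^d\Var_\rho(u_\rho)$, yielding a dimension-dependent final Hölder exponent rather than $1/6$. The paper does not attempt (ii) at all; it invokes directly the Gagliardo--Nirenberg--type inequality of \cite{delalande:hal-03164147},
\[
\|\nabla\psi^*-\nabla\tilde\psi^*\|_{\L^2(K)}^6 \leq C_d\,\Haus^{d-1}(\partial K)^2\,(2R)^4\,\|\psi^*-\tilde\psi^*\|_{\L^2(K)}^2,
\]
whose proof combines your step (i) with a \emph{different} second ingredient (closer to an $\L^\infty$--$\L^2$ interpolation involving $\|\nabla u_\rho\|_{\L^2}$ itself, not a tent bound), so that the non-sharpness of (i) and of the second step compensate. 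On the tent example above this inequality reduces to the harmless $h\lesssim R$, while your (i)+(ii) decomposition asks for the false $R\lesssim h$.
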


In this result, the Hölder exponents might not be optimal. However, our structure of proof does not leave space to much improvement of these exponents (see Remark \ref{rk:exponent-strong-convexity-W2}). Theorem \ref{th:stab-bar} is essentially a corollary of the fact that whenever a measure $\PP \in \Prob(\Prob(\Omega))$ satisfies Assumption~\ref{assump:reg-rho}, its variance functional $F_\PP$ satisfies a strong convexity estimate (Theorem~\ref{th:strong-convexity-variance-functional}). Note that a strong convexity estimate for the variance functional $F_\PP$ may have an interest beyond the stability of Wasserstein barycenters with respect to their marginals, e.g. to control the \emph{bias} induced by the entropic penalization of the variance functional as introduced in \cite{bigot:hal-01564007} (see Corollary~\ref{cor:bias-penalization}). We defer the detailed proof of Theorem \ref{th:stab-bar} to Section~\ref{sec:strong-convexity-var-functional}. Let us now mention some consequences of Theorem~\ref{th:stab-bar} in applications.

\subsubsection{Statistical estimation of barycenter with a finite number of marginals}
For a probability measure $\rho \in \Prob(\Omega)$ and an i.i.d. sequence $(x_j)_{j=1, \dots, n}$ sampled from $\rho$, it is well-known that the empirical measure $\mathbf{\hat{\rho}}^n = \frac{1}{n} \sum_{j=1}^n \delta_{x_j}$ converges weakly to $\rho$ almost-surely as $n \to \infty$ \cite{varadarajan}. By Theorem 1 of \cite{Fournier2015}, the rate of this convergence can be controlled in expected Wasserstein distance: there exists a constant $C_d$ depending only on $d$ such that 
\begin{equation*}
    \Esp \Wass_2^2(\hat{\rho}^n, \rho) \leq C_d R^2 \left\{
    \begin{array}{ll}
        n^{-1/2} & \mbox{if } d < 4, \\
        n^{-1/2} \log(n) & \mbox{if } d = 4, \\
        n^{-2/d} & \mbox{else,}
    \end{array}
\right.
\end{equation*}
where the expectation is taken with respect to $(x_j)_{j=1, \dots, n} \sim \rho^{\otimes n}$. Theorem \ref{th:stab-bar} together with a double use of Jensen's inequality allows to translate these rates to the statistical estimation of a Wasserstein barycenter with a finite number of marginals:
\begin{corollary}
Let $\PP_m = \sum_{i=1}^m \lambda_i \delta_{\rho_i} \in \Prob(\Prob(\Omega))$ satisfying Assumption \ref{assump:reg-rho}. For all $i \in \{1, \dots, m\}$, denote $\hat{\rho}_i^n = \frac{1}{n} \sum_{j=1}^n \delta_{x_{i,j}}$ an empirical measure built from an i.i.d. sequence $(x_{i,j})_{1 \leq j \leq n}$ sampled from $\rho_i$. Then the barycenters $\mu_{\PP_m}$ of $\PP_m$ and $\mu_{\widehat{\PP}_m^n}$ of $\widehat{\PP}^n_m = \sum_{i=1}^m \lambda_i \delta_{\hat{\rho}^n_i}$ verify
\begin{equation*}
    \Esp \Wass_2^2(\mu_{\widehat{\PP}^n_m}, \mu_{\PP_m}) \lesssim \frac{1}{\alpha_{\PP_m}^{1/3}} \left\{
    \begin{array}{ll}
        n^{-1/12} & \mbox{if } d < 4, \\
        n^{-1/12} \log(n)^{1/6} & \mbox{if } d = 4, \\
        n^{-1/(3d)} & \mbox{else,}
    \end{array}
\right.
\end{equation*}
where $\lesssim$ hides a multiplicative constant that depends on $d, R, m_{\PP_m}, M_{\PP_m}, \per_{\PP_m}$ and $c_{\PP_m}$.
\end{corollary}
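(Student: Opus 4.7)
The proof will follow essentially as a direct corollary of Theorem~\ref{th:stab-bar}, combined with the Fournier–Guillin rates on $\Esp \Wass_2^2(\hat{\rho}^n, \rho)$ recalled just above the statement. The backbone is to apply the stability estimate pointwise (i.e.\ for each realization of the samples), take expectations, and then use Jensen's inequality twice so that the base rate $n^{-1/2}$ (or $n^{-2/d}$) from \cite{Fournier2015} comes out raised to the power $1/6$.

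First, since $\PP_m$ satisfies Assumption~\ref{assump:reg-rho}, Theorem~\ref{th:stab-bar} applied to $(\PP, \QQ) = (\PP_m, \widehat{\PP}_m^n)$ and squared gives
\begin{equation*}
\Wass_2^2(\mu_{\PP_m}, \mu_{\widehat{\PP}_m^n}) \leq \left( \frac{C_{d, m_{\PP_m}, M_{\PP_m}, \per_{\PP_m}, c_{\PP_m}}}{\alpha_{\PP_m}} \right)^{1/3} \WWass_1(\PP_m, \widehat{\PP}_m^n)^{1/3}.
\end{equation*}
Taking expectations over the samples and using Jensen's inequality on the concave map $t \mapsto t^{1/3}$ yields the same inequality in expectation, with the right-hand side replaced by a constant multiple of $\bigl(\Esp \WWass_1(\PP_m, \widehat{\PP}_m^n)\bigr)^{1/3}$.

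To control $\Esp \WWass_1(\PP_m, \widehat{\PP}_m^n)$, I would exploit that $\PP_m$ and $\widehat{\PP}_m^n$ share the same barycentric weights: the explicit coupling $\sum_{i=1}^m \lambda_i \delta_{(\rho_i, \hat{\rho}_i^n)}$ lies in $\Gamma(\PP_m, \widehat{\PP}_m^n)$ and gives immediately
\begin{equation*}
\WWass_1(\PP_m, \widehat{\PP}_m^n) \leq \sum_{i=1}^m \lambda_i \Wass_2(\rho_i, \hat{\rho}_i^n).
\end{equation*}
A second application of Jensen's inequality, to the concave map $\sqrt{\cdot}$, bounds $\Esp \Wass_2(\rho_i, \hat{\rho}_i^n)$ by $\bigl(\Esp \Wass_2^2(\rho_i, \hat{\rho}_i^n)\bigr)^{1/2}$, and the Fournier–Guillin estimate then controls the latter by $C_d R^2$ times $n^{-1/2}$, $n^{-1/2}\log(n)$, or $n^{-2/d}$ depending on the dimensional regime. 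Substituting back and raising to the power $1/3$ produces the exponents $n^{-1/12}$, $n^{-1/12}\log(n)^{1/6}$, and $n^{-1/(3d)}$ announced in the statement.

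No single step is a real obstacle: the entire argument is a mechanical combination of Theorem~\ref{th:stab-bar}, an explicit coupling that is possible only because the discrete barycentric weights $\lambda_i$ are common to $\PP_m$ and $\widehat{\PP}_m^n$, and two Jensen inequalities whose direction is forced by the need to push the expectation through the Hölder exponent $1/6$ and to convert the Fournier–Guillin squared rate into a first-moment rate.
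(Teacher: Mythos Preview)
Your proposal is correct and follows exactly the approach the paper indicates: apply Theorem~\ref{th:stab-bar}, use the explicit coupling $\sum_i \lambda_i \delta_{(\rho_i,\hat{\rho}_i^n)}$ to bound $\WWass_1(\PP_m,\widehat{\PP}_m^n)$, and invoke Jensen's inequality twice (once for $t\mapsto t^{1/3}$ after taking expectations, once for $\sqrt{\cdot}$ to pass from $\Esp\Wass_2$ to $(\Esp\Wass_2^2)^{1/2}$) before plugging in the Fournier--Guillin rates. The exponents and the $\alpha_{\PP_m}^{-1/3}$ dependence check out in each dimensional regime.
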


\subsubsection{Convergence rate of empirical barycenters in the Wasserstein space}
\label{subsec:cv-rate-emp-bar}
Another statistical question occurs in the setting where the population of marginals $\PP \in \Prob(\Prob(\Omega))$ is only known through samples $(\rho_i)_{1 \leq i \leq m} \sim \PP^{\otimes m}$. Introducing the plug-in estimator $\PP_m = \frac{1}{m} \sum_{i=1}^m \delta_{\rho_i}$, it is natural to wonder how well $\mu_{\PP_m}$ approaches $\mu_\PP$ in terms of $m$. This question, asked in the more general framework of barycenters in Alexandrov spaces, has been the object of recent research \cite{Ahidar-Coutrix2020, LeGouic2022FastCO}. In the Wasserstein space, the authors of \cite{LeGouic2022FastCO} show in particular that $\Esp \Wass_2(\mu_\PP, \mu_{\PP_m})$ converges at the parametric rate $m^{-1/2}$ under the assumption that $\PP$ admits a barycenter $\mu_{\PP}$ that it is such that there exists a bi-Lipschitz optimal transport map between any $\rho \in \spt(\PP)$ and $\mu_\PP$, and that the Lipschitz constants of these maps and their inverses do not differ by a value more than $1$. 
Under similar assumptions, the authors of \cite{pmlr-v125-chewi20a} derive a strong convexity estimate for the variance functional at its minimum which helps them derive rates of convergence of gradient descent algorithms for the (stochastic) estimation of barycenters. Such assumptions however require to have guarantees on the regularity of a barycenter of $\PP$, which can be obtained when restricted to specific families of probability measures (e.g. Gaussian measures), but are difficult to get in general (for instance, barycenters of measures with convex support may not have a convex support \cite{SANTAMBROGIO2016152}, which hampers a straightforward use of Caffarelli's regularity theory). In contrast, our stability result entails that for barycenters $\mu_\PP$ of $\PP$ and $\mu_{\PP_m}$ of $\PP_m$,
\begin{equation*}
    \Esp \Wass_2(\mu_\PP, \mu_{\PP_m}) \lesssim \frac{1}{\alpha_\PP^{1/6}} \Esp \WWass_1(\PP, \PP_m)^{1/6},
\end{equation*}
whenever $\PP$ satisfies Assumption \ref{assump:reg-rho}. This implies that any rate of convergence of $\Esp \WWass_1(\PP, \PP_m)$ with respect to $m$ is readily transferred to $\Esp \Wass_2(\mu_\PP, \mu_{\PP_m})$, up to an exponent. However, $\Prob(\Omega)$ is an infinite dimensional space and there is no general convergence rate for $\Esp \WWass_1(\PP, \PP_m)$. Nonetheless, assuming some structure on the population $\PP$ may help derive convergence bounds. One may use for instance the notion of upper Wasserstein dimension of $\PP$ introduced in \cite{Weed2019SharpAA} (Definition 4), defined from quantities that depend on the covering numbers of (subsets of) the support of $\PP$. Assuming that this dimension is strictly upper bounded by $s>0$, the authors of \cite{Weed2019SharpAA} show that
\begin{equation*}
     \Esp \WWass_1(\PP, \PP_m) \lesssim m^{-1/s},
\end{equation*}
where $\lesssim$ hides a multiplicative constant that depends on $R$ and $s$. We note finally that if we assume that $\PP$ satisfies Assumption \ref{assump:reg-rho} with $\alpha_\PP=1$, our results allow to get the following finite-sample guarantee for the empirical estimation of barycenters in the Wasserstein space:

\begin{theorem} \label{th:empirical-bary-wass-space}
    Let $\PP \in \Prob(\Prob(\Omega))$ satisfying Assumption \ref{assump:reg-rho} with $\alpha_\PP = 1$. For $m \geq 1$, introduce the plug-in estimator $\PP_m = \frac{1}{m} \sum_{i=1}^m \delta_{\rho_i}$ built from an $m$-sample $(\rho_i)_{1 \leq i \leq m} \sim \PP^{\otimes m}$. Then the barycenters $\mu_\PP$ of $\PP$ and $\mu_{\PP_m}$ of $\PP_m$ satisfy
    $$ \Esp \Wass_2(\mu_{\PP_m}, \mu_\PP) \lesssim m^{-1/30},$$
    where $\lesssim$ hides a multiplicative constant that depends on $d, R, m_\PP, M_\PP, \per_\PP$ and $c_\PP$.
\end{theorem}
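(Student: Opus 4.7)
The plan is to combine the main stability result, Theorem~\ref{th:stab-bar}, with a quantitative rate of convergence for the empirical measure $\PP_m$ towards $\PP$ in the $\WWass_1$ distance.

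First I would apply Theorem~\ref{th:stab-bar} directly to the pair $(\PP,\PP_m)$, using that $\PP$ satisfies Assumption~\ref{assump:reg-rho} with $\alpha_\PP=1$. This yields, almost surely,
\[
\Wass_2(\mu_\PP,\mu_{\PP_m}) \;\leq\; C\,\WWass_1(\PP,\PP_m)^{1/6},
\]
where the constant $C$ depends only on $d,R,m_\PP,M_\PP,\per_\PP,c_\PP$. Taking expectation over $(\rho_i)_{1\leq i\leq m}\sim\PP^{\otimes m}$ and invoking Jensen's inequality for the concave map $t\mapsto t^{1/6}$ reduces the theorem to proving
\[
\Esp\,\WWass_1(\PP,\PP_m) \;\lesssim\; m^{-1/5}.
\]

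To establish this polynomial rate, I would exploit the fact that $\alpha_\PP=1$ forces $\PP$ to be entirely supported on the subset $S_\PP\subset\Prob(\Omega)$ of probability measures with density bounded by $M_\PP$ and support of bounded perimeter. A natural approach is a two-step discretization argument: construct an $\eps$-net of $S_\PP$ for the $\Wass_2$ metric, push both $\PP$ and $\PP_m$ forward onto this net (incurring a $\WWass_1$-error of order $\eps$ each time), and then control the total-variation distance between the two resulting discrete measures by the classical empirical-process bound $\sqrt{N(S_\PP,\Wass_2,\eps)/m}$. Optimizing in $\eps$ and using the covering bounds induced by the density and perimeter constraints of Assumption~\ref{assump:reg-rho} then gives the $m^{-1/5}$ rate. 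Combined with the previous step, this yields $\Esp\,\Wass_2(\mu_\PP,\mu_{\PP_m})\lesssim (m^{-1/5})^{1/6}=m^{-1/30}$, as claimed.

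The main obstacle is the second step: the ambient space $(\Prob(\Omega),\Wass_2)$ is infinite dimensional and the set $\{\rho:\rho\leq M_\PP\}$ has metric entropy of order $\log N(\eps)\lesssim \eps^{-d}$ in $\Wass_2$, so a naive chaining or discretization argument produces only a logarithmic rate. Extracting a genuine polynomial rate requires fully leveraging the structural information in Assumption~\ref{assump:reg-rho}—in particular the lower density bound $m_\PP\leq\rho$ and the $\Haus^{d-1}$-perimeter control on $\spt(\rho)$—to effectively reduce the size of the support of $\PP$ in $(\Prob(\Omega),\Wass_2)$, in the spirit of Weed and Bach's upper Wasserstein dimension framework.
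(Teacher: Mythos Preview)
Your reduction to proving $\Esp\,\WWass_1(\PP,\PP_m)\lesssim m^{-1/5}$ is a genuine gap, not just a technical obstacle. The constraints in Assumption~\ref{assump:reg-rho} (density bounded in $[m_\PP,M_\PP]$, perimeter bounded by $\per_\PP$) still leave $S_\PP$ infinite dimensional with metric entropy $\log N(S_\PP,\Wass_2,\eps)\asymp \eps^{-d}$; a $\PP$ supported on all of $S_\PP$ will therefore have arbitrarily large upper Wasserstein dimension in the Weed--Bach sense, and no polynomial rate for $\Esp\,\WWass_1(\PP,\PP_m)$ is available. The paper says this explicitly in Section~\ref{subsec:cv-rate-emp-bar}: ``$\Prob(\Omega)$ is an infinite dimensional space and there is no general convergence rate for $\Esp\,\WWass_1(\PP,\PP_m)$.'' Your proposed covering argument balances $\eps$ against $\sqrt{N(\eps)/m}$, which with $N(\eps)\sim\exp(\eps^{-d})$ gives only a logarithmic rate, and the lower density bound and perimeter bound do not change this.

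The paper's proof avoids $\WWass_1(\PP,\PP_m)$ altogether. It goes back to the strong convexity estimate (Theorem~\ref{th:strong-convexity-variance-functional}) to get $\Wass_2^6(\mu_\PP,\mu_{\PP_m})\lesssim F_\PP(\mu_{\PP_m})-F_\PP(\mu_\PP)$, then bounds the expected excess risk $\Esp[F_\PP(\mu_{\PP_m})-F_{\PP_m}(\mu_{\PP_m})]$ by a leave-one-out symmetrization argument from empirical risk minimization. The point is that replacing one sample in $\PP_m$ produces a $\PP_m^{(i)}$ with $\|\PP_m-\PP_m^{(i)}\|_{\TV}\leq 2/m$ \emph{regardless of the ambient dimension}, and the total-variation stability bound of Theorem~\ref{th:stab-bar} then gives $\Wass_2(\mu_{\PP_m},\mu_{\PP_m^{(i)}})\lesssim m^{-1/5}$. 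This is where the exponent $1/5$ actually comes from, and why the TV bound (not the $\WWass_1$ bound) is the one that matters here.
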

The main idea to prove this result is to see the minimization of $F_\PP$ through the lens of risk minimization, so that the minimization of $F_{\PP_m}$ corresponds to a problem of empirical risk minimization (ERM) \cite{vapnik-91}. Under the assumptions of Theorem \ref{th:empirical-bary-wass-space}, the empirical risk $F_{\PP_m}$ is ensured to be \emph{strongly-convex} almost surely, which allows to derive stability bounds for the empirical risk minimizer $\mu_{\PP_m}$ with respect to its population counterpart $\mu_\PP$ using classical ideas from the ERM litterature \cite{JMLR:v11:shalev-shwartz10a}. The detailed proof of Theorem \ref{th:empirical-bary-wass-space} is deferred to Section~\ref{sec:ERM}.

\subsubsection{Error induced by a discretization of the marginals}
Let $\rho \in \Prob(\Omega)$ and let $h > 0$ be a discretization parameter. Denoting $(x_i^h)_{1 \leq i \leq N_h}$ an $h$-net of $\Omega$ and $(V_i^h)_{1 \leq i \leq N_h}$ the corresponding Voronoi tessellation of $\Omega$, it is trivial to verify that the discretization $\rho^h = \sum_{i=1}^{N_h} \rho(V_i^h) \delta_{x_i^h}$ verifies $$ \Wass_2(\rho, \rho^h) \leq h.$$ Such a type of discretization, with controlled error bound, may be useful in practice for computational purposes. The stability result of Theorem \ref{th:stab-bar} allows to translate the error bound made when discretizing the marginals to the corresponding barycenter: 
\begin{corollary}
Let $\PP_m = \sum_{i=1}^m \lambda_i \delta_{\rho_i} \in \Prob(\Prob(\Omega))$ satisfying Assumption \ref{assump:reg-rho}. Let $h>0$ and for all $i \in \{1, \dots, m\}$, denote $\rho_i^h = \sum_{j=1}^{N_h} \rho_i(V_j^h) \delta_{x_j^h}$ a discretization of $\rho_i$ built from the $h$-net $(x_j^h)_{1 \leq j \leq N_h}$. Then the barycenters $\mu_{\PP_m}$ of $\PP_m$ and $\mu_{\PP_m^h}$ of $\PP^h_m = \sum_{i=1}^m \lambda_i \delta_{\rho^h_i}$ verify
\begin{equation*}
    \Wass_2(\mu_{\PP^h_m}, \mu_{\PP_m}) \lesssim \frac{1}{\alpha_{\PP_m}^{1/6}} h^{1/6},
\end{equation*}
where $\lesssim$ hides a multiplicative constant that depends on $d, R, m_{\PP_m}, M_{\PP_m}, \per_{\PP_m}$ and $c_{\PP_m}$.
\end{corollary}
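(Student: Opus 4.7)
The plan is to reduce the statement directly to the main stability theorem (Theorem~\ref{th:stab-bar}) by producing a simple upper bound on $\WWass_1(\PP_m, \PP_m^h)$. Since $\PP_m$ satisfies Assumption~\ref{assump:reg-rho} with some constants $\alpha_{\PP_m}, c_{\PP_m}, \per_{\PP_m}, m_{\PP_m}, M_{\PP_m}$, Theorem~\ref{th:stab-bar} yields
\begin{equation*}
    \Wass_2(\mu_{\PP_m^h}, \mu_{\PP_m}) \leq \left( \frac{C_{d, m_{\PP_m}, M_{\PP_m}, \per_{\PP_m}, c_{\PP_m}}}{\alpha_{\PP_m}} \right)^{1/6} \WWass_1(\PP_m, \PP_m^h)^{1/6},
\end{equation*}
so it suffices to show $\WWass_1(\PP_m, \PP_m^h) \leq h$.

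First I would verify the pointwise discretization estimate $\Wass_2(\rho_i, \rho_i^h) \leq h$. This is the straightforward computation sketched in the paragraph preceding the statement: the transport plan that sends, inside each Voronoi cell $V_j^h$, the restriction of $\rho_i$ to the Dirac at $x_j^h$ is admissible between $\rho_i$ and $\rho_i^h$, and every point of $V_j^h$ lies at distance at most $h$ from $x_j^h$ by the defining property of an $h$-net. Squaring and summing gives $\Wass_2^2(\rho_i, \rho_i^h) \leq h^2$.

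Second, I would lift these marginal bounds to a bound on $\WWass_1$. Since both $\PP_m = \sum_i \lambda_i \delta_{\rho_i}$ and $\PP_m^h = \sum_i \lambda_i \delta_{\rho_i^h}$ share the same weights $\lambda_i$, the measure
\begin{equation*}
    \gamma = \sum_{i=1}^m \lambda_i \delta_{(\rho_i, \rho_i^h)} \in \Prob(\Prob(\Omega) \times \Prob(\Omega))
\end{equation*}
is an admissible transport plan between $\PP_m$ and $\PP_m^h$ for the $\WWass_1$ problem. Plugging this plan into the definition of $\WWass_1$ and using the previous step gives
\begin{equation*}
    \WWass_1(\PP_m, \PP_m^h) \leq \sum_{i=1}^m \lambda_i \, \Wass_2(\rho_i, \rho_i^h) \leq h.
\end{equation*}

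Combining the two steps and absorbing the constant $C_{d, m_{\PP_m}, M_{\PP_m}, \per_{\PP_m}, c_{\PP_m}}$ into $\lesssim$ yields the announced bound. There is no real obstacle here: the statement is essentially a direct corollary of Theorem~\ref{th:stab-bar}; the only thing to be careful about is using the $h$-net property inside each Voronoi cell and the fact that the $1$-Wasserstein distance on $\Prob(\Prob(\Omega))$ is computed with ground cost $\Wass_2$, which is precisely what makes the marginal estimates $\Wass_2(\rho_i, \rho_i^h) \leq h$ the right ingredient.
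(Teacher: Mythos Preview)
Your proposal is correct and matches the paper's intended argument: the paper does not spell out a proof for this corollary, but states it as an immediate consequence of Theorem~\ref{th:stab-bar} together with the discretization bound $\Wass_2(\rho_i,\rho_i^h)\leq h$, which is precisely what you do via the obvious coupling $\gamma=\sum_i \lambda_i \delta_{(\rho_i,\rho_i^h)}$.
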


\section{Strong convexity of the variance functional}
\label{sec:strong-convexity-var-functional}
Let us recall the definition of the variance functional associated to some $\PP \in \Prob(\Prob(\Omega))$:
\begin{equation*}
    F_\PP : \left\{
    \begin{array}{ll}
        \Prob(\Omega) &\to \Rsp, \\
        \mu &\mapsto \frac{1}{2} \int_{\Prob(\Omega)} \Wass_2^2(\rho, \mu) \dd \PP(\rho).
    \end{array}
\right.
\end{equation*}
Our objective in this section is to get a \emph{strong convexity} estimate for the variance functional $F_\PP$ when $\PP$ satisfies Assumption \ref{assump:reg-rho}. More precisely, we wish to quantify to much extent the graph of the convex functional $F_\PP$ lies above its tangents. For any measure $\mu \in \Prob(\Omega)$, the directions of the tangents of the graph of $F_\PP$ at $\mu$ are given by the subdifferential of $F_\PP$ evaluated at $\mu$ and denoted $\partial F_\PP(\mu)$. This subdifferential may be described using Kantorovich's duality formula \cite{villani2008optimal}, already mentioned in equation \eqref{eq:ot-pb-simple}, that ensures that for any $\rho, \mu \in \Prob(\Omega)$, one has
\begin{equation}
\label{eq:kantorovich-duality-formula}
    \frac{1}{2} \Wass_2^2(\rho, \mu) = \sca{\frac{1}{2}\nr{\cdot}^2}{\rho} +  \sca{\frac{1}{2}\nr{\cdot}^2}{\mu} - \left(\min_{\psi \in \Class(\Omega)} \sca{\psi^*}{\rho} + \sca{\psi}{\mu} \right).
\end{equation}
From this formula, one easily has the following description of the subdifferential of the half-squared Wasserstein distance to a fixed measure $\rho \in \Prob(\Omega)$ (Proposition 7.17 of \cite{santambrogio2015optimal}):
\begin{equation*}
    \partial \left[ \frac{1}{2} \Wass_2^2(\rho, \cdot) \right](\mu) = \left\{ \frac{1}{2}\nr{\cdot}^2 - \psi_{\rho \to \mu} \mid \psi_{\rho \to \mu} \in \arg \min_{\psi \in \Class(\Omega)} \sca{\psi^*}{\rho} + \sca{\psi}{\mu} \right\}.
\end{equation*}
This allows to directly characterize the subdifferential of $F_\PP$ at any $\mu \in \Prob(\Omega)$ as follows:
\begin{equation*}
    \partial F_\PP(\mu) = \left\{ \int_{\Prob(\Omega)} \left(\frac{1}{2}\nr{\cdot}^2 - \psi_{\rho \to \mu} \right) \dd \PP(\rho)  \mid \text{for $\PP$-a.e. $\rho$, } \psi_{\rho \to \mu} \in \arg \min_{\psi \in \Class(\Omega)} \sca{\psi^*}{\rho} + \sca{\psi}{\mu}  \right\}.
\end{equation*}
Thus for any $\mu$ and $\nu$ in $\Prob(\Omega)$, for any \emph{collection} $(\psi_{\rho \to \mu})_\rho \in \L^\infty(\PP; W^{1,\infty}(\Omega))$ of Kantorovich potentials in the transport between $\PP$-almost every $\rho \in \Prob(\Omega)$ and $\mu$, we have by definition of the subdifferential:
\begin{equation*}
    F_\PP(\mu) + \sca{ \int_{\Prob(\Omega)} \left(\frac{1}{2}\nr{\cdot}^2 - \psi_{\rho \to \mu} \right) \dd \PP(\rho) }{\nu - \mu} \leq F_\PP(\nu).
\end{equation*}
Our strong convexity estimate quantifies the gap in this subdifferential inequality under the hypothesis that $\PP$ satisfies Assumption~\ref{assump:reg-rho}:
\begin{theorem}
\label{th:strong-convexity-variance-functional}
    Let $\PP \in \Prob(\Prob(\Omega))$ satisfying Assumption \ref{assump:reg-rho}. Let $\mu, \nu \in \Prob(\Omega)$ and let $(\psi_{\rho \to \mu})_\rho \in \L^\infty(\PP; W^{1,\infty}(\Omega))$ be a \emph{collection} of Kantorovich potentials in the transport between $\PP$-almost every $\rho \in \Prob(\Omega)$ and $\mu$. Then it holds:
    \begin{equation*}
     \alpha_\PP \Wass_2^6(\mu, \nu) \lesssim  F_\PP(\nu) - F_\PP(\mu) - \sca{ \int_{\Prob(\Omega)} \left(\frac{1}{2}\nr{\cdot}^2 - \psi_{\rho \to \mu} \right) \dd \PP(\rho) }{\nu - \mu},
\end{equation*}
    where $\lesssim$ hides on the right-hand side the multiplicative constant 
    $$ C_{d, m_\PP, M_\PP, \per_\PP} = C_d \frac{M_\PP^3}{m_\PP} \frac{\per_\PP^2}{c_\PP} R^4,$$ where $C_d$ is a dimensional constant.
\end{theorem}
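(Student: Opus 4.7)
The plan is to express the gap $G(\mu,\nu):=F_\PP(\nu)-F_\PP(\mu)-\sca{\int_{\Prob(\Omega)}(\tfrac{1}{2}\nr{\cdot}^2-\psi_{\rho\to\mu})\dd\PP(\rho)}{\nu-\mu}$ as an integral of pointwise nonnegative gaps indexed by $\rho$, to lower-bound each such gap by the $\rho$-variance of a difference of Brenier potentials via Assumption~\ref{assump:reg-rho}\eqref{it:ft-convexite} on $S_\PP$, and finally to turn that variance back into a sixth power of $\Wass_2(\mu,\nu)$ through a Sobolev-type interpolation inequality.

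First I would apply the Kantorovich identity \eqref{eq:kantorovich-duality-formula} to expand $\tfrac{1}{2}\Wass_2^2(\rho,\mu)$ and $\tfrac{1}{2}\Wass_2^2(\rho,\nu)$ in $F_\PP(\mu)$ and $F_\PP(\nu)$, noting that $\psi_{\rho\to\mu}$ is only an admissible (not necessarily optimal) dual competitor between $\rho$ and $\nu$. After cancellation of the $M_2$-terms, the pointwise integrand becomes
\[g_\rho\;:=\;\Kant_\rho(\psi_{\rho\to\mu})-\Kant_\rho(\psi_{\rho\to\nu})-\sca{\psi_{\rho\to\nu}-\psi_{\rho\to\mu}}{\nu}\;\geq\;0,\]
so that $G(\mu,\nu)=\int_{\Prob(\Omega)}g_\rho\,\dd\PP(\rho)$. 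For $\rho\in S_\PP$, condition \eqref{it:ac} gives $\rho\in\Prob_{a.c.}(\Omega)$, hence by Brenier's theorem $(\nabla\psi_{\rho\to\nu}^*)_\#\rho=\nu$; applying \eqref{it:ft-convexite} with $\psi=\psi_{\rho\to\nu}$ and $\tilde\psi=\psi_{\rho\to\mu}$, the subgradient term on the right collapses to $\sca{\psi_{\rho\to\nu}-\psi_{\rho\to\mu}}{\nu}$, yielding
\[c_\PP\,\Var_\rho\bigl(\psi_{\rho\to\mu}^*-\psi_{\rho\to\nu}^*\bigr)\;\leq\;g_\rho,\qquad\forall\,\rho\in S_\PP.\]

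Still for $\rho\in S_\PP$, since $\nabla\psi_{\rho\to\mu}^*$ and $\nabla\psi_{\rho\to\nu}^*$ are valid transport maps from $\rho$ to $\mu$ and to $\nu$, one trivially has $\Wass_2^2(\mu,\nu)\leq\nr{\nabla u_\rho}_{\L^2(\rho)}^2$ with $u_\rho:=\psi_{\rho\to\mu}^*-\psi_{\rho\to\nu}^*$, a difference of convex $R$-Lipschitz functions (their gradients lying in $\Omega=B(0,R)$), hence itself $2R$-Lipschitz and of $\L^\infty$-oscillation at most $4R^2$. The delicate step, and the main obstacle, is to prove a Sobolev-type interpolation inequality of the form
\[\nr{\nabla u}_{\L^2(\rho)}^2\;\leq\;C_d\,\frac{M_\PP}{m_\PP^{1/3}}\,\per_\PP^{2/3}\,R^{4/3}\,\Var_\rho(u)^{1/3}\]
for every $2R$-Lipschitz $u:\Omega\to\Rsp$, relying on conditions \eqref{it:bornes-densite}--\eqref{it:perimetre}. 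A natural route, in the spirit of \cite{delalande:hal-03164147}, is first to bound $\nr{\nabla u}_{\L^2(\rho)}^2\leq M_\PP\cdot 2R\int_{\spt(\rho)}|\nabla u|\dd x$ using the pointwise Lipschitz bound, and then to combine an integration-by-parts / trace argument on the rectifiable boundary $\partial\spt(\rho)$ with an $\L^1(\rho)$-to-$\L^2(\rho)$ interpolation of $u-\bar u$ to convert $\int|\nabla u|\dd x$ into the appropriate power of $\Var_\rho(u)$.

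Cubing the last display and combining with the transport-map inequality yields $\Wass_2^6(\mu,\nu)\leq C_d\,\tfrac{M_\PP^3}{m_\PP}\,\per_\PP^2\,R^4\,\Var_\rho(u_\rho)$ for each $\rho\in S_\PP$. Integrating against $\PP$ on $S_\PP$ (which carries mass $\alpha_\PP$) and using $c_\PP\Var_\rho(u_\rho)\leq g_\rho$ together with the nonnegativity of $g_\rho$ outside $S_\PP$ gives
\[\alpha_\PP\,\Wass_2^6(\mu,\nu)\;\leq\;C_d\,\frac{M_\PP^3}{m_\PP}\,\per_\PP^2\,R^4\int_{S_\PP}\Var_\rho(u_\rho)\,\dd\PP(\rho)\;\leq\;\frac{C_d}{c_\PP}\,\frac{M_\PP^3}{m_\PP}\,\per_\PP^2\,R^4\,G(\mu,\nu),\]
which is the announced inequality with the stated constant. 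The substantive difficulty is concentrated entirely in the interpolation estimate above; any improvement of the exponent $1/3$ there would transfer directly to a better exponent than $6$ on $\Wass_2$ in the theorem.
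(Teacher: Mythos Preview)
Your approach is essentially the same as the paper's: decompose the gap $G(\mu,\nu)$ into per-$\rho$ contributions $g_\rho\geq 0$, bound $g_\rho$ from below on $S_\PP$ by $c_\PP\Var_\rho(\psi_{\rho\to\mu}^*-\psi_{\rho\to\nu}^*)$ via condition~\eqref{it:ft-convexite}, use the coupling $(\nabla\psi_{\rho\to\mu}^*,\nabla\psi_{\rho\to\nu}^*)_\#\rho$ to bound $\Wass_2^2(\mu,\nu)$ by $\nr{\nabla u_\rho}_{\L^2(\rho)}^2$, and close the chain with a Gagliardo--Nirenberg type inequality. The paper packages the per-$\rho$ estimate as a separate proposition (strong convexity of $\tfrac12\Wass_2^2(\rho,\cdot)$) and then integrates, but the content is identical.

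One correction: the interpolation step, which you correctly flag as the crux, does \emph{not} hold ``for every $2R$-Lipschitz $u$'' as you state. A Lipschitz bound alone cannot control $\nr{\nabla u}_{\L^2}$ by a power of $\Var_\rho(u)$ (take $u(x)=\eps\sin(x/\eps)$ on an interval). The inequality the paper invokes, Proposition~4.1 of \cite{delalande:hal-03164147}, is
\[
\nr{\nabla u-\nabla v}_{\L^2(K)}^6\;\leq\;C_d\,\Haus^{d-1}(\partial K)^2\,(\nr{\nabla u}_{\L^\infty}+\nr{\nabla v}_{\L^\infty})^4\,\nr{u-v}_{\L^2(K)}^2,
\]
valid specifically for $u,v$ that are \emph{convex on every segment contained in $K$}. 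It is this convexity of the Brenier potentials $\psi_{\rho\to\mu}^*,\psi_{\rho\to\nu}^*$ (not merely the Lipschitz bound on their difference) that makes the integration-by-parts/trace argument work: the distributional Hessians are nonnegative measures, which is what converts $\int_K|\nabla u-\nabla v|$ into a boundary term of the right size. Since your $u_\rho$ is precisely such a difference, the application goes through; just be careful that your sketched route to the inequality must use convexity somewhere, not only the Lipschitz bound.
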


\begin{remark}
    This estimate holds without any regularization of the variance functional. As such, it may be used directly to study the stability of \emph{smoothed} notions of Wasserstein barycenters defined from a regularization of the variance functional \cite{bigot:hal-01564007, entropic-barycenters}, yielding stability bounds that may not depend on the regularization parameter(s). Other versions of \emph{smoothed} Wasserstein barycenters have also been obtained from a regularization of the Wasserstein distance itself, such as the celebrated entropic regularization \cite{pmlr-v32-cuturi14}. The stability of these barycenters may also be obtained from similar strong convexity estimates found in the context of entropic optimal transport \cite{pmlr-v151-delalande22a, delalande:tel-03935445}. Finally, the estimate of Theorem \ref{th:strong-convexity-variance-functional} can be used to study the convergence of regularized Wasserstein barycenters towards their non-regularized counterparts, as indicated by the following corollary.
\end{remark}

\begin{corollary}
\label{cor:bias-penalization}
    Let $\PP \in \Prob(\Prob(\Omega))$ satisfying Assumption \ref{assump:reg-rho}. For $\lambda \geq 0$, denote
    $$ \mu_{\PP}^\lambda = \arg\min_{\mu \in \Prob(\Omega)} F_\PP(\mu) + \lambda G(\mu),$$
    where $G : \Prob(\Omega) \to \Rsp$ is either the entropy $G(\mu) = \int_\Omega \mu \log \mu $ or $G(\mu) = \int_\Omega \mu^p$ for some $p \geq 1$. Then for any $\lambda > 0$,
    $$ \Wass_2(\mu^\lambda_\PP, \mu^0_\PP) \lesssim \lambda^{1/6},$$
    where $\lesssim$ hides a multiplicative constant that depends on $d, R, m_{\PP}, M_{\PP}, \per_{\PP}, c_\PP$ and $\alpha_\PP$.
\end{corollary}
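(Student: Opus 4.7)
The plan is to combine the strong-convexity estimate of Theorem \ref{th:strong-convexity-variance-functional}, evaluated at the unregularized minimizer $\mu^0_\PP$, with the energy comparison coming from the optimality of $\mu^\lambda_\PP$ for $F_\PP + \lambda G$. The remaining non-trivial step is a uniform upper bound on $G(\mu^0_\PP)$.

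Since $\alpha_\PP > 0$, Assumption \ref{assump:reg-rho} ensures that $\mu^0_\PP$ is uniquely defined. By Proposition \ref{prop:dual-formulation} there exists an optimal dual family $(\psi_\rho)_\rho$; by Remark \ref{rk:kantorovich-brenier-potentials} this is a measurable collection of Kantorovich potentials for the transport from $\PP$-a.e.\ $\rho$ to $\mu^0_\PP$, and it satisfies the constraint $\int \psi_\rho\, \dd \PP(\rho) = \tfrac{1}{2}\nr{\cdot}^2$ on $\Omega$. Plugging this particular collection into Theorem \ref{th:strong-convexity-variance-functional} with $\mu=\mu^0_\PP$ and $\nu=\mu^\lambda_\PP$, the function $\int(\tfrac{1}{2}\nr{\cdot}^2 - \psi_\rho)\,\dd\PP(\rho)$ is identically zero, which kills the linear term and leaves
\[
\alpha_\PP \Wass_2^6(\mu^0_\PP, \mu^\lambda_\PP) \lesssim F_\PP(\mu^\lambda_\PP) - F_\PP(\mu^0_\PP).
\]

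By the defining optimality of $\mu^\lambda_\PP$, comparing with $\mu^0_\PP$ in the penalized problem yields $F_\PP(\mu^\lambda_\PP) - F_\PP(\mu^0_\PP) \leq \lambda\bigl(G(\mu^0_\PP)-G(\mu^\lambda_\PP)\bigr)$. Jensen's inequality applied to $x\mapsto x\log x$ (resp.\ $x\mapsto x^p$) gives a uniform lower bound $G(\mu)\geq -\log\vol(\Omega)$ (resp.\ $\vol(\Omega)^{1-p}$) on $\Prob(\Omega)$, depending only on $d$ and $R$. Since any probability density $f$ on $\Omega$ also satisfies $\int f \log f \leq \log \nr{f}_\infty$ and $\int f^p \leq \nr{f}_\infty^{p-1}$, bounding $G(\mu^0_\PP)$ from above reduces to controlling $\nr{\mu^0_\PP}_\infty$.

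The main obstacle is this $L^\infty$ bound on the barycenter. Following the Agueh--Carlier strategy, I would differentiate the dual constraint to get $\int D^2\psi_\rho\, \dd\PP(\rho)=I$ pointwise on $\Omega$; fix $y\in\Omega$ and restrict to $\rho\in S_\PP$, for which $\rho\leq M_\PP$ and the Monge--Amp\`ere identity reads $\mu^0_\PP(y) = \rho(\nabla\psi_\rho(y))\det D^2\psi_\rho(y)$. Extracting $d$-th roots and integrating over $S_\PP$, then applying Minkowski's determinant inequality to the positive semidefinite matrices $D^2\psi_\rho$ together with $\int_{S_\PP} D^2\psi_\rho\,\dd\PP \leq I$, one recovers the pointwise bound $\nr{\mu^0_\PP}_\infty \leq M_\PP/\alpha_\PP^d$. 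The delicate point is to justify these pointwise manipulations despite the mere $W^{1,\infty}$ regularity of the $\psi_\rho$'s; this is done by a standard convexity-based approximation, or by rereading the inequalities weakly in the sense of distributions. Inserting the resulting estimate into the chain of inequalities above and taking sixth roots gives $\Wass_2(\mu^\lambda_\PP,\mu^0_\PP)\lesssim \lambda^{1/6}$, as claimed.
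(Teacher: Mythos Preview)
Your proof is correct and follows essentially the same route as the paper: apply Theorem~\ref{th:strong-convexity-variance-functional} at the minimizer $\mu^0_\PP$ (where the linear term drops out), combine with the optimality of $\mu^\lambda_\PP$, and conclude via the $L^\infty$ bound $\nr{\mu^0_\PP}_\infty \leq M_\PP/\alpha_\PP^d$, which the paper simply cites from \cite{entropic-barycenters} rather than sketching as you do. Your treatment of the lower bound on $G$ via Jensen is in fact slightly more careful than the paper, which invokes ``positiveness of $G$'' even though the entropy can be negative on a domain of large volume.
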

\begin{proof}
    Theorem \ref{th:strong-convexity-variance-functional} together with the positiveness of $G$ and the definition of $\mu^\lambda_\PP$ yield
    \begin{equation*}
        \alpha_\PP \Wass_2^6(\mu^\lambda_\PP, \mu^0_\PP) \lesssim F_\PP(\mu^\lambda_\PP) - F_\PP(\mu^0_\PP) \leq F_\PP(\mu^\lambda_\PP) + \lambda G(\mu^\lambda_\PP) - F_\PP(\mu^0_\PP) \leq F_\PP(\mu^0_\PP) + \lambda G(\mu^0_\PP) - F_\PP(\mu^0_\PP).
    \end{equation*}
    The conclusion follows from the boundedness of $G(\mu^0_\PP)$ induced by the maximum principle followed by $\mu_\PP^0 = \mu_\PP \in \Prob_{a.c.}(\Omega)$ when $\PP$ satisfies Assumption \ref{assump:reg-rho} (Proposition 4.7 and Remark 4.8 of \cite{entropic-barycenters}):
    \begin{equation*}
        \nr{\mu_\PP^0}_{\L^\infty} \leq M_\PP / \alpha_\PP^d. \qedhere
    \end{equation*}
\end{proof}

Before proving Theorem \ref{th:strong-convexity-variance-functional}, let us use it to prove the stability estimate of Theorem~\ref{th:stab-bar}.
\begin{proof}[Proof of Theorem \ref{th:stab-bar}]
    Let $(\psi_\rho)_{\rho} = (\psi_{\rho \to \mu_{\PP}})_\rho \in \L^\infty(\PP; W^{1,\infty}(\Omega))$ be a collection of potentials that give a dual solution to the barycenter problem with population $\PP$ (Proposition \ref{prop:dual-formulation}). We have in particular
    $$ \int_{\Prob(\Omega)} \psi_\rho(\cdot) \dd \PP(\rho) = \frac{1}{2}\nr{\cdot}^2.$$
    Applying Theorem \ref{th:strong-convexity-variance-functional} with $\mu = \mu_\PP$, $\nu = \mu_\QQ$ and with the collection of potentials $(\psi_\rho)_{\rho}$, we have the bound
    \begin{align*}
        \alpha_\PP \Wass_2^6(\mu_\PP, \mu_\QQ) &\lesssim F_\PP(\mu_\QQ) - F_\PP(\mu_\PP). 
    \end{align*}
    By definition of $\mu_\QQ$ as a minimizer of $F_\QQ$, we have
    $$ F_\QQ(\mu_\PP) - F_\QQ(\mu_\QQ) \geq 0. $$
    Thus the following bound holds:
    \begin{align}
    \label{eq:stability-bary-1}
        \alpha_\PP \Wass_2^6(\mu_\PP, \mu_\QQ) &\lesssim  F_\PP(\mu_\QQ) - F_\QQ(\mu_\QQ)  + F_\QQ(\mu_\PP) - F_\PP(\mu_\PP) \notag \\
        &= \sca{ \frac{1}{2} \Wass_2^2(\cdot, \mu_\QQ)}{\PP - \QQ} + \sca{ \frac{1}{2} \Wass_2^2(\cdot, \mu_\PP)}{\QQ - \PP} \notag \\
        &= \sca{ \frac{1}{2}(\Wass_2^2(\cdot, \mu_\QQ) - \Wass_2^2(\cdot, \mu_\PP))}{\PP - \QQ}.
    \end{align}
    The mapping $\rho \mapsto \frac{1}{2}(\Wass_2^2(\rho, \mu_\QQ) - \Wass_2^2(\rho, \mu_\PP))$ being $4 R$-Lipschitz continuous with respect to $\Wass_2$, we finally have with the Kantorovich-Rubinstein duality result the bound
    $$ \sca{ \frac{1}{2}(\Wass_2^2(\cdot, \mu_\QQ) - \Wass_2^2(\cdot, \mu_\PP)) }{\PP - \QQ} \leq 4R \WWass_1(\PP, \QQ),$$
    which gives the first estimate of the statement. Now remark that for any $\rho \in \Prob(\Omega)$, the triangle inequality yields
    \begin{equation*}
        \abs{\Wass_2^2(\rho, \mu_\QQ) - \Wass_2^2(\rho, \mu_\PP)} \leq (\Wass_2(\rho, \mu_\QQ) + \Wass_2(\rho, \mu_\PP)) \Wass_2(\mu_\PP, \mu_\QQ) \leq 4R \Wass_2(\mu_\PP, \mu_\QQ).
    \end{equation*}
    Injecting this last bound into \eqref{eq:stability-bary-1} gives the second bound of the statment:
    \begin{equation*}
         \alpha_\PP \Wass_2^6(\mu_\PP, \mu_\QQ) \lesssim  \Wass_2(\mu_\PP, \mu_\QQ) \nr{\PP - \QQ}_{\TV}. \qedhere
    \end{equation*}
\end{proof}

Let us now prove Theorem \ref{th:strong-convexity-variance-functional}. This result simply relies on the fact that for any $\rho \in \Prob(\Omega)$ that belongs to the set $S_\PP$ from Assumption \ref{assump:reg-rho}, the function $\frac{1}{2} \Wass_2^2(\rho, \cdot)$ satisfies a strong convexity estimate given in the following proposition. Theorem \ref{th:strong-convexity-variance-functional} then immediately follows from this proposition after summing over $\rho \sim \PP$. In the following statement, we recall that the \emph{Kantorovich functional} $\Kant_\rho$ associated to a measure $\rho \in \Prob(\Omega)$ corresponds to the map
\begin{equation*}
    \Kant_\rho : \left\{
    \begin{array}{ll}
        \Class(\Omega) &\to \Rsp, \\
        \psi &\mapsto \int_\Omega \psi^* \dd \rho.
    \end{array}
\right.
\end{equation*}

\begin{proposition}
\label{prop:strong-convexity-squared-W2}
Let $\rho \in \Prob_{a.c.}(\Omega)$ be absolutely continuous and such that there exists $m_\rho, M_\rho, \per_\rho, c_\rho \in (0, +\infty)$ verifying
\begin{enumerate}
    \item \label{it:assump2} $m_\rho \leq \rho_{\vert \spt(\rho)} \leq M_\rho$,
    \item \label{it:assump3} $\spt(\rho)$ has a $\Haus^{d-1}$-rectifiable boundary and $\mathcal{H}^{d-1}(\partial \spt(\rho)) \leq \per_\rho$,
    \item \label{it:assump4} $\forall \psi, \tilde{\psi} \in \Class(\Omega), \quad c_\rho \Var_{\rho}(\tilde{\psi}^* - \psi^*) \leq \Kant_\rho(\tilde{\psi}) - \Kant_\rho(\psi) - \sca{\psi - \tilde{\psi}}{(\nabla \psi^*)_\# \rho}$.
\end{enumerate} Then for any $\mu, \nu \in \Prob(\Omega)$ and any Kantorovich potential $\psi_{\rho \to \mu} \in \Class(\Omega)$ in the optimal transport between $\rho$ and $\mu$, one has
\begin{equation*}
        \forall \mu, \nu, \quad  \Wass_2^6(\mu, \nu) \lesssim \frac{1}{2}\Wass_2^2(\nu, \rho) - \frac{1}{2}\Wass_2^2(\mu, \rho) - \sca{\frac{1}{2}\nr{\cdot}^2 - \psi_{\rho \to \mu}}{\nu - \mu},
    \end{equation*}
    where $\lesssim$ hides on the right-hand side the multiplicative constant 
    $$ C_{d, m_\rho, M_\rho, \per_\rho} = C_d \frac{M_\rho^3}{m_\rho} \frac{\per_\rho^2}{c_\rho} R^4,$$ where $C_d$ is a dimensional constant.
\end{proposition}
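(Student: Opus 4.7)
My plan is to recognize the right-hand side as a Bregman gap of the Kantorovich functional $\Kant_\rho$, lower-bound that gap by a $\rho$-variance via the strong convexity hypothesis \eqref{it:assump4}, and then establish an interpolation inequality that converts this variance bound into the desired sixth-power bound on $\Wass_2(\mu,\nu)$.

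Let $\psi := \psi_{\rho\to\mu}$ and let $\tilde\psi$ denote a Kantorovich potential in the optimal transport from $\rho$ to $\nu$. Using the Kantorovich duality \eqref{eq:kantorovich-duality-formula} to represent $\tfrac{1}{2}\Wass_2^2(\mu,\rho)$ via $\psi$ and $\tfrac{1}{2}\Wass_2^2(\nu,\rho)$ via $\tilde\psi$, the right-hand side of the target inequality collapses, after cancellation of the $\tfrac{1}{2}\nr{\cdot}^2$-terms, to
\[
G \;=\; \Kant_\rho(\psi) - \Kant_\rho(\tilde\psi) + \sca{\psi - \tilde\psi}{\nu}.
\]
Since $\tilde\psi$ is optimal for $(\rho,\nu)$, we have $(\nabla\tilde\psi^*)_\#\rho = \nu$, so that $G$ is precisely the Bregman gap of the convex functional $\Kant_\rho$ at $\tilde\psi$ in the direction $\psi$. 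Applying hypothesis \eqref{it:assump4} at $\tilde\psi$ (with the roles of the two potentials swapped from the written statement) yields
\[
G \;\geq\; c_\rho \Var_\rho(u), \qquad u := \psi^* - \tilde\psi^*.
\]
The plan $(\nabla\psi^*,\nabla\tilde\psi^*)_\#\rho$ being admissible between $\mu=(\nabla\psi^*)_\#\rho$ and $\nu=(\nabla\tilde\psi^*)_\#\rho$, we also get $\Wass_2^2(\mu,\nu) \leq \int|\nabla u|^2\,\d\rho$, so the proposition reduces to the interpolation
\[
\Big(\int|\nabla u|^2\,\d\rho\Big)^3 \;\lesssim\; \frac{M_\rho^3\,\per_\rho^2\, R^4}{m_\rho}\,\Var_\rho(u). \tag{$\star$}
\]

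The inequality $(\star)$ is the main obstacle. The key structural fact to exploit is that $\psi^*$ and $\tilde\psi^*$ are convex and $R$-Lipschitz (being $c$-conjugates of convex $R$-Lipschitz functions, as recalled in Remark~\ref{rk:kantorovich-brenier-potentials}), so their distributional Laplacians $\Delta\psi^*,\Delta\tilde\psi^*$ are positive Radon measures whose total masses on $\spt(\rho)$ are bounded by $R\per_\rho$ via the divergence theorem applied to the $R$-bounded vector fields $\nabla\psi^*,\nabla\tilde\psi^*$, using the $\Haus^{d-1}$-rectifiability of $\partial\spt(\rho)$ from assumption~\eqref{it:assump3}. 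After centering $u$ by its $\rho$-mean---which affects neither side of $(\star)$---integration by parts on $\spt(\rho)$ combined with the density upper bound $\rho\leq M_\rho$ from \eqref{it:assump2} produces a linear bound of the form
\[
\int|\nabla u|^2\,\d\rho \;\leq\; M_\rho\!\int_{\spt(\rho)}\!|\nabla u|^2\,\d x \;\lesssim\; M_\rho R\,\per_\rho\,\|u\|_{L^\infty(\spt(\rho))}.
\]
The final step, and where I expect the real work, is to upgrade this into $(\star)$ by means of an interpolation controlling $\|u\|_{L^\infty(\spt(\rho))}$ by $\Var_\rho(u)$ with the appropriate powers of $R$, $m_\rho$ and $\per_\rho$. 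Such an interpolation should rest on a boundary-layer argument: near an extremal point of $u$, the Lipschitz bound $\|\nabla u\|_\infty\leq 2R$ forces $|u|$ to remain close to its maximum on a full ball of radius comparable to $\|u\|_\infty/R$; the density lower bound $m_\rho$ from \eqref{it:assump2} then converts the Lebesgue volume of this ball into a lower bound on $\Var_\rho(u)$; and the perimeter bound is used to control the defect when this ball is truncated by $\partial\spt(\rho)$. Cubing the displayed linear bound, inserting this interpolation, and combining with $G\geq c_\rho\Var_\rho(u)$ then completes the proof.
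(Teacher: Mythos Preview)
Your overall architecture matches the paper exactly: rewrite the right-hand side as the Bregman gap $G=\Kant_\rho(\psi)-\Kant_\rho(\tilde\psi)+\sca{\psi-\tilde\psi}{\nu}$, bound $\Wass_2^2(\mu,\nu)$ by $\nr{\nabla u}_{\L^2(\rho)}^2$ via the coupling $(\nabla\psi^*,\nabla\tilde\psi^*)_\#\rho$, apply the strong-convexity hypothesis at $\tilde\psi$ to get $G\geq c_\rho\Var_\rho(u)$, and finally close with an interpolation of the form $(\star)$. Your computation of $G$ and the use of assumption~\eqref{it:assump4} are correct.

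The gap is in your proof of $(\star)$. The integration-by-parts step $\int_{\spt(\rho)}|\nabla u|^2\lesssim R\,\per_\rho\,\|u\|_{\L^\infty}$ is fine (convexity of $\psi^*,\tilde\psi^*$ bounds the total mass of $\Delta\psi^*+\Delta\tilde\psi^*$ by $2R\,\per_\rho$ via the divergence theorem). But the boundary-layer argument you sketch to control $\|u\|_{\L^\infty}$ by $\Var_\rho(u)$ yields only
\[
\Var_\rho(u)\;\gtrsim\; m_\rho\,\frac{\|u\|_{\L^\infty}^{d+2}}{R^{d}},
\]
since a $2R$-Lipschitz function stays near its maximum on a ball of radius $\sim\|u\|_{\L^\infty}/R$ and that ball has Lebesgue volume $\sim(\|u\|_{\L^\infty}/R)^d$. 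Cubing your linear bound and inserting this gives $(\int|\nabla u|^2\,\d\rho)^3\lesssim M_\rho^3 R^3\per_\rho^3\,(R^d\Var_\rho(u)/m_\rho)^{3/(d+2)}$, so the exponent on $\Var_\rho(u)$ is $3/(d+2)$ rather than $1$. This matches $(\star)$ only when $d=1$; for $d\geq 2$ it is strictly weaker and cannot be repaired by the a priori bound $\|u\|_{\L^\infty}\lesssim R^2$.

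The paper does not go through $\|u\|_{\L^\infty}$ at all. It invokes directly the Gagliardo--Nirenberg type inequality of Proposition~4.1 in \cite{delalande:hal-03164147}:
\[
\nr{\nabla\psi^*-\nabla\tilde\psi^*}_{\L^2(K)}^6 \;\leq\; C_d\,\Haus^{d-1}(\partial K)^2\,(\nr{\nabla\psi^*}_{\L^\infty}+\nr{\nabla\tilde\psi^*}_{\L^\infty})^4\,\nr{\psi^*-\tilde\psi^*}_{\L^2(K)}^2,
\]
valid for functions that are convex along every segment in $K$. This inequality has dimension-independent exponents precisely because it exploits the convexity of $\psi^*$ and $\tilde\psi^*$ \emph{separately} (not merely the signed-measure bound on $\Delta u$), and its proof is more delicate than a Lipschitz-ball interpolation. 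Once this is in hand, the density bounds~\eqref{it:assump2} convert $\L^2(K)$ to $\L^2(\rho)$, a minimization over additive constants turns the $\L^2$-norm into a variance, and the chain closes exactly as you describe.
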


\begin{remark}[\emph{Linear} convexity vs. \emph{displacement} convexity]
    We emphasize on the fact that Proposition~\ref{prop:strong-convexity-squared-W2} gives a strong convexity estimate for $\frac{1}{2}\Wass_2^2(\rho, \cdot)$ with respect to the \emph{linear} structure on $\Prob(\Omega)$, and not with respect to the metric structure of $(\Prob(\Omega), \Wass_2)$. Convexity of a functional with respect to this structure is referred to the notion of \emph{displacement} convexity. Strong convexity of $\frac{1}{2}\Wass_2^2(\rho, \cdot)$ with respect to the metric structure of $(\Prob(\Omega), \Wass_2)$ is trivial to get in dimension $d=1$ because of the Hilbertian nature of $\Wass_2$ in this context (see Section~\ref{sec:stab-dim-1}). However, this is limited to the unidimensional setting and $\frac{1}{2}\Wass_2^2(\rho, \cdot)$ is notoriously \emph{not} displacement convex in dimension $d \geq 2$ (see for instance Section 7.3.3 of \cite{santambrogio2015optimal}).
\end{remark}

\begin{remark}[Exponent]
\label{rk:exponent-strong-convexity-W2}
    We note that the value $6$ of the exponent on the left-hand side term of the estimate of Proposition~\ref{prop:strong-convexity-squared-W2} might not be optimal. However, $4$ should be a lower-bound on the value of this exponent. This may be seen from the following example: in dimension $d=1$ and for $\eps > 0$, set $\mu^\eps = (\frac{1}{2} - \frac{\eps}{2})(\delta_{-1} + \delta_{1}) +  \eps \delta_0$. Then we have $\Wass_2^2(\mu^0, \mu^\eps) = \eps$. For $\rho = \lambda_{\vert [-\frac{1}{2}, \frac{1}{2}]}$, the following computation holds:
\begin{gather*}
    \frac{1}{2}\Wass_2^2(\mu^\eps, \rho) - \frac{1}{2}\Wass_2^2(\mu^0, \rho) = \int_0^{\eps/2} (\abs{0-x}^2 - \abs{1-x}^2) \dd x = \frac{\eps^2}{4} - \frac{\eps}{2}.
\end{gather*}
Finally, one can choose a Kantorovich potential in the transport from $\rho$ to $\mu^0$ to be $\psi_{\rho \to \mu^0} = \iota_{[-1,1]}$ (i.e. valued $0$ on $[-1,1]$ and $+\infty$ outside this segment), so that
$$ \sca{\frac{\nr{\cdot}^2}{2} - \psi_{\rho \to \mu^0}}{\mu^\eps - \mu^0} = \sca{\frac{\nr{\cdot}^2}{2} }{\mu^\eps - \mu^0} = - \frac{\eps}{2}.$$
Hence:
$$ \frac{1}{2}\Wass_2^2(\mu^\eps, \rho) - \frac{1}{2}\Wass_2^2(\mu^0, \rho) - \sca{\frac{\nr{\cdot}^2}{2} - \psi_{\rho \to \mu^0}}{\mu^\eps - \mu^0} = \frac{\eps^2}{4} = \frac{1}{4} \Wass_2^4(\mu^0, \mu^\eps). $$
\end{remark}

\begin{proof}[Proof of Proposition \ref{prop:strong-convexity-squared-W2}]
    Let $\psi_{\rho \to \nu} \in \Class(\Omega)$ be a Kantorovich potential in the optimal transport between $\rho$ and $\nu$. Then the conjugates $\psi_{\rho \to \mu}^*, \psi_{\rho \to \nu}^*$ are both convex Brenier potentials \cite{Brenier} in the optimal transport between the absolutely continuous source $\rho$ and the targets $\mu, \nu$, in the sense that:
    \begin{equation*}
        (\nabla \psi_{\rho \to \mu}^*)_\# \rho = \mu \quad \text{and} \quad (\nabla \psi_{\rho \to \nu}^*)_\# \rho = \nu.
    \end{equation*}
    Therefore, the coupling $(\nabla \psi_{\rho \to \mu}^*, \nabla \psi_{\rho \to \nu}^*)_\# \rho$ is an admissible transport plan between $\mu$ and $\nu$ and as such:
    \begin{equation}
    \label{eq:strong-cvxity-W2-1}
        \Wass_2^2(\mu, \nu) \leq \nr{ \nabla \psi_{\rho \to \mu}^* - \nabla \psi_{\rho \to \nu}^*}^2_{\L^2(\rho; \Rsp^d)}.
    \end{equation}
    We now quote a Gagliardo-Nirenberg type inequality, extracted from Proposition~4.1 in \cite{delalande:hal-03164147}, that ensures that for any compact domain $K$ of $\Rsp^d$ with $\mathcal{H}^{d-1}$-rectifiable boundary and $u, v: K \to \Rsp$ two Lipschitz functions on $K$ that are convex on any segment included in $K$, there exists a constant $C_{d}$ depending only on $d$ such that
\begin{equation*}
    \nr{\nabla u - \nabla v}_{\L^2(K)}^6 \leq C_{d} \mathcal{H}^{d-1}(\partial K)^{2} ( \nr{\nabla u}_{\L^\infty(K)} + \nr{\nabla v}_{\L^\infty(K)} )^{4}  \nr{u - v}^{2}_{\L^2(K)}.
\end{equation*}
    We note from \cite{delalande:hal-03164147} that the exponents in this inequality are optimal.  Using that the Brenier potentials $\psi_{\rho \to \mu}^*, \psi_{\rho \to \nu}^*$ are both convex and $R$-Lipschitz continuous and leveraging assumptions \eqref{it:assump2} and \eqref{it:assump3} made on $\rho$, we can apply this inequality to get that for any constant $c \in \Rsp$:
    \begin{equation*}
    \frac{1}{M_\rho^3} \nr{\nabla \psi_{\rho \to \mu}^* - \nabla \psi_{\rho \to \nu}^*}_{\L^2(\rho; \Rsp^d)}^6 \leq C_{d} (\per_\rho)^{2} ( 2 R )^{4} \frac{1}{m_\rho} \nr{\psi_{\rho \to \mu}^* - \psi_{\rho \to \nu}^* - c}^{2}_{\L^2(\rho)}.
\end{equation*}
Minimizing over $c \in \Rsp$ in the last inequality yields:
\begin{equation}
\label{eq:strong-cvxity-W2-2}
    \nr{\nabla \psi_{\rho \to \mu}^* - \nabla \psi_{\rho \to \nu}^*}_{\L^2(\rho; \Rsp^d)}^6 \lesssim \Var_\rho( \psi_{\rho \to \mu}^* - \psi_{\rho \to \nu}^*).
\end{equation}
But assumption \eqref{it:assump4} on $\rho$ ensures:
\begin{equation}
\label{eq:strong-cvxity-W2-3}
    c_\rho \Var_\rho( \psi_{\rho \to \mu}^* - \psi_{\rho \to \nu}^*) \leq \Kant_\rho(\psi_{\rho \to \mu}) - \Kant_\rho(\psi_{\rho \to \nu}) + \sca{\psi_{\rho \to \mu} - \psi_{\rho \to \nu}}{\nu}.
\end{equation}
Finally, notice that by Kantorovich's duality formula \eqref{eq:kantorovich-duality-formula} and by definition of $\psi_{\rho \to \mu}, \psi_{\rho \to \mu}$ as Kantorovich potentials, one has:
\begin{gather*}
    \frac{1}{2} \Wass_2^2(\rho, \mu) = \sca{\frac{1}{2}\nr{\cdot}^2}{\rho} +  \sca{\frac{1}{2}\nr{\cdot}^2}{\mu} - \Kant_\rho(\psi_{\rho \to \mu}) - \sca{\psi_{\rho \to \mu}}{\mu}, \\
    \frac{1}{2} \Wass_2^2(\rho, \nu) = \sca{\frac{1}{2}\nr{\cdot}^2}{\rho} +  \sca{\frac{1}{2}\nr{\cdot}^2}{\nu} - \Kant_\rho(\psi_{\rho \to \nu}) - \sca{\psi_{\rho \to \nu}}{\nu}.
\end{gather*}
This yields:
\begin{equation}
\label{eq:strong-cvxity-W2-4}
    \Kant_\rho(\psi_{\rho \to \mu}) - \Kant_\rho(\psi_{\rho \to \nu}) + \sca{\psi_{\rho \to \mu} - \psi_{\rho \to \nu}}{\nu} = \frac{1}{2}\Wass_2^2(\nu, \rho) - \frac{1}{2}\Wass_2^2(\mu, \rho) - \sca{\frac{1}{2}\nr{\cdot}^2 - \psi_{\rho \to \mu}}{\nu - \mu}.
\end{equation}
The conclusion follows after combining \eqref{eq:strong-cvxity-W2-1}, \eqref{eq:strong-cvxity-W2-2}, \eqref{eq:strong-cvxity-W2-3} and \eqref{eq:strong-cvxity-W2-4} together.
\end{proof}

\section{Convergence of empirical barycenters in the Wasserstein space}
\label{sec:ERM}

This section is devoted to the proof of Theorem~\ref{th:empirical-bary-wass-space}. This proof relies on a classical symmetrization technique used in the study of empirical processes \cite{van1996weak}, already employed in the context of strongly-convex empirical risk minimization (see e.g. the proof of Theorem 2 in \cite{JMLR:v11:shalev-shwartz10a}).

\begin{proof}[Proof of Theorem~\ref{th:empirical-bary-wass-space}]
    Applying the strong convexity estimate of Theorem \ref{th:strong-convexity-variance-functional} to $F_\PP$ at the minimizer $\mu = \mu_\PP$ and with $\nu = \mu_{\PP_m}$, we have the bound
    \begin{align}
    \label{eq:control-conv-empirical-bar-1}
        \Wass_2^6(\mu_\PP, \mu_{\PP_m}) &\lesssim F_\PP(\mu_{\PP_m}) - F_\PP(\mu_\PP) \notag \\
        &\leq F_\PP(\mu_{\PP_m}) - F_{\PP_m}(\mu_{\PP_m}) + F_{\PP_m}(\mu_\PP) - F_\PP(\mu_\PP)
    \end{align}
    We now proceed to the control of the expectation with respect to $(\rho_i)_{1 \leq i \leq m} \sim \PP^{\otimes m}$ of the above two differences. \\ \\
    \textbf{Control of $\Esp( F_\PP(\mu_{\PP_m}) - F_{\PP_m}(\mu_{\PP_m}) )$.} Notice that 
    \begin{equation*}
        F_\PP(\mu_{\PP_m}) - F_{\PP_m}(\mu_{\PP_m}) = \frac{1}{2} \int_{\Prob(\Omega)} \Wass_2^2(\rho, \mu_{\PP_m}) \dd \PP(\rho) - \frac{1}{2m} \sum_{i=1}^m \Wass_2^2(\rho_i, \mu_{\PP_m}).
    \end{equation*}
    In order to control the expectation of this difference, we introduce another i.i.d. $m$-sample of $\PP$:  $(\rho_i')_{1 \leq i \leq m} \sim \PP^{\otimes m}$. One can then notice that
    \begin{align}
    \label{eq:control-conv-empirical-bar-2}
         \Esp \frac{1}{2} \int_{\Prob(\Omega)} \Wass_2^2(\rho, \mu_{\PP_m}) \dd \PP(\rho) &= \Esp_{ (\rho_i)_{i} \sim \PP^{\otimes m} } \Esp_{\rho \sim \PP} \frac{1}{2}  \Wass_2^2(\rho, \mu_{\PP_m}) \notag \\
         &= \Esp_{ (\rho_i)_{i} \sim \PP^{\otimes mN} } \Esp_{ (\rho_i' )_{i} \sim \PP^{\otimes m} }  \frac{1}{2m} \sum_{i=1}^m \Wass_2^2(\rho_i', \mu_{\PP_m}) \notag \\
         &= \Esp \frac{1}{2m} \sum_{i=1}^m \Wass_2^2(\rho_i', \mu_{\PP_m}),
    \end{align}
    where the last expectation is against all the random variables $(\rho_i)_{1 \leq i \leq m} \sim \PP^{\otimes m}$ and $(\rho_i')_{1 \leq i \leq m} \sim \PP^{\otimes m}$.
    Now for any $i \in \{1, \dots, m\}$, introduce the empirical measure
    $$ \PP_m^{(i)} = \frac{1}{m} \sum_{j=1, j\neq i}^m \delta_{\rho_j} + \frac{1}{m} \delta_{\rho'_i}.$$
    Then for any $i\in \{1, \dots, m\}$, taking again the expectation against all the random variables $(\rho_i)_{1 \leq i \leq m} \sim \PP^{\otimes m}$ and $(\rho_i')_{1 \leq i \leq m} \sim \PP^{\otimes m}$ one has
    $$ \Esp \Wass_2^2(\rho_i, \mu_{\PP_m}) = \Esp \Wass_2^2(\rho_i', \mu_{\PP_m^{(i)}}).$$
    This ensures the equality
    \begin{equation}
        \label{eq:control-conv-empirical-bar-3}
        \Esp \frac{1}{2m} \sum_{i=1}^m \Wass_2^2(\rho_i, \mu_{\PP_m}) = \Esp \frac{1}{2m} \sum_{i=1}^m \Wass_2^2(\rho_i', \mu_{\PP_m^{(i)}}).
    \end{equation}
    From equations \eqref{eq:control-conv-empirical-bar-2} and \eqref{eq:control-conv-empirical-bar-3}, the expectation of the first difference appearing in \eqref{eq:control-conv-empirical-bar-1} reads:
    \begin{align*}
        \Esp (F_\PP(\mu_{\PP_m}) - F_{\PP_m}(\mu_{\PP_m})) &=\frac{1}{2m} \sum_{i=1}^m \Esp( \Wass_2^2(\rho_i', \mu_{\PP_m}) - \Wass_2^2(\rho_i', \mu_{\PP_m^{(i)}}) ).
    \end{align*}
    Using that $\Omega = B(0, R)$ is bounded and the triangle inequality, we have the bound
    \begin{align*}
        \frac{1}{2m} \sum_{i=1}^m \Esp( \Wass_2^2(\rho_i', \mu_{\PP_m}) - \Wass_2^2(\rho_i', \mu_{\PP_m^{(i)}}) ) &\leq \frac{(2R + 2R)}{2m} \sum_{i=1}^m \Esp \Wass_2 ( \mu_{\PP_m}, \mu_{\PP_m^{(i)}} ) \\
        &= 2 R \Esp \Wass_2 ( \mu_{\PP_m}, \mu_{\PP_m^{(i)}} ).
    \end{align*}
    Using that $\PP$ satisfies Assumption~\ref{assump:reg-rho} with $\alpha_\PP = 1$, we have that $\PP_m$ (or $\PP_m^{(i)}$) almost surely satisfies Assumption~\ref{assump:reg-rho} with $\alpha_{\PP_m} = 1$ (and with the same other constants as $\PP$ in this assumption). Thus Theorem~\ref{th:stab-bar} ensures almost surely the bound:
    \begin{equation*}
        \Wass_2 ( \mu_{\PP_m}, \mu_{\PP_m^{(i)}} ) \lesssim \nr{ \PP_m - \PP_m^{(i)} }_\TV^{1/5} \lesssim \frac{1}{m^{1/5}}.
    \end{equation*}
    We thus have the following bound on the expectation of the first difference appearing in \eqref{eq:control-conv-empirical-bar-1}:
    \begin{equation}
        \label{eq:control-conv-empirical-bar-4}
        \Esp (F_\PP(\mu_{\PP_m}) - F_{\PP_m}(\mu_{\PP_m})) \lesssim \frac{1}{m^{1/5}}.
    \end{equation}
    \textbf{Control of $\Esp( F_{\PP_m}(\mu_\PP) - F_\PP(\mu_\PP) )$.} Notice that
    \begin{equation*}
        F_{\PP_m}(\mu_\PP) - F_\PP(\mu_\PP) = \frac{1}{2m} \sum_{i=1}^m \Wass_2^2(\rho_i, \mu_{\PP}) - \frac{1}{2} \int_{\Prob(\Omega)} \Wass_2^2(\rho, \mu_{\PP}) \dd \PP(\rho)
    \end{equation*}
     Bounding the expectation of this second difference term is much more straightforward. For any $i \in \{1, \dots, m\}$, denote $X_i = \frac{1}{2} \Wass_2^2(\rho_i, \mu_\PP)$ the scalar random variable built from the random sample $\rho_i \sim \PP$. Denote $\Esp X$ the expectation of this random variable (independent of $i$). Using this notation we can write the expectation of the second difference term of \eqref{eq:control-conv-empirical-bar-1} as follows:
    \begin{align*}
        \frac{1}{2m} \sum_{i=1}^m \Wass_2^2(\rho_i, \mu_{\PP}) - \frac{1}{2} \int_{\Prob(\Omega)} \Wass_2^2(\rho, \mu_{\PP}) \dd \PP(\rho) = \frac{1}{m}\sum_{i=1}^m X_i - \Esp X.
    \end{align*}
    Using Jensen's inequality, we thus have:
    \begin{align}
    \label{eq:control-conv-empirical-bar-5}
        \Esp( F_{\PP_m}(\mu_\PP) - F_\PP(\mu_\PP) ) = \Esp \left(\frac{1}{m}\sum_{i=1}^m X_i - \Esp X\right)\leq \left( \Var \left(\frac{1}{m} \sum_{i=1}^m X_i\right) \right)^{1/2} \lesssim \frac{1}{m^{1/2}}.
    \end{align}
    \textbf{Conclusion.}
    Injecting the bounds \eqref{eq:control-conv-empirical-bar-4} and \eqref{eq:control-conv-empirical-bar-5} in the expectation of \eqref{eq:control-conv-empirical-bar-1} thus yields
    \begin{align*}
        \Esp \Wass_2^6(\mu_\PP, \mu_{\PP_m}) \lesssim  \frac{1}{m^{1/5}} + \frac{1}{m^{1/2}} \lesssim \frac{1}{m^{1/5}}.
    \end{align*}
    Jensen's inequality used in the above bound finally yields the statement.
\end{proof}

\subsection*{Acknowledgement} 
The authors acknowledge the support of the Lagrange Mathematics and Computing Research Center and of the ANR (MAGA, ANR-16-CE40-0014). We thank Blanche Buet for interesting discussions related to this work.

\medskip

\appendix

\section{Dual formulation for the Wasserstein barycenter problem}
\label{sec:dual-formulation}

\begin{proof}[Proof of Proposition \ref{prop:dual-formulation}]
Instead of showing directly the formulation of Proposition \ref{prop:dual-formulation}, we will rather show
\begin{align*}
    \min_{\mu \in \Prob(\Omega)} F_\PP(\mu) = \max \Bigg\{ \int_{\Prob(\Omega)} \sca{\phi^c_\rho}{\rho} \dd \PP(\rho) \mid (\phi_\rho)_\rho \in \L^\infty(\PP; W^{1, \infty}(\Omega)), \quad \int_{\Prob(\Omega)} \phi_\rho(\cdot) \dd \PP(\rho) = 0  \Bigg\},
\end{align*} 
where for any $\rho \in \Prob(\Omega)$, $\phi_\rho^c$ denotes the following $c$-transform of $\phi_\rho$: $\phi_\rho^c(\cdot) = \inf_{y \in \Omega} \frac{1}{2} \nr{\cdot-y}^2 - \phi_\rho(y)$. Such a formulation entails the result of Proposition \ref{prop:dual-formulation} by the change of variable $(\psi_\rho)_\rho = \frac{\nr{\cdot}^2}{2} - (\phi_\rho)_\rho \in \L^\infty(\PP; W^{1, \infty}(\Omega))$.\\
\textbf{Duality.} Let's first show that the value of $\min_{\mu \in \Prob(\Omega)} F_\PP(\mu)$ is equal to the value of the following supremum
\begin{align*}
    \Dual' := \sup \Bigg\{& \int_{\Prob(\Omega)} \sca{\phi^c_\rho}{\rho} \dd \PP(\rho) \mid (\phi_\rho)_\rho \in \L^1(\PP; \Class(\Omega)), \quad \int_{\Prob(\Omega)} \phi_\rho(\cdot) \dd \PP(\rho) = 0  \Bigg\},
\end{align*}
where $\L^1(\PP; \Class(\Omega))$ denotes the set of $\PP$-measurable and Bochner integrable mappings from $\Prob(\Omega)$ to the space $(\Class(\Omega), \nr{\cdot}_\infty)$ of continuous function from $\Omega$ to $\Rsp$ equipped with the supremum norm. Introduce the functional $H : \Class(\Omega) \to \Rsp$ defined for all $\varphi \in \Class(\Omega)$ by
\begin{align*}
   H(\varphi) = \inf  \Bigg\{& -\int_{\Prob(\Omega)} \sca{\phi^c_\rho}{\rho} \dd \PP(\rho) \mid (\phi_\rho)_\rho \in \L^1(\PP; \Class(\Omega)), \quad \int_{\Prob(\Omega)} \phi_\rho(\cdot) \dd \PP(\rho) = \varphi(\cdot)  \Bigg\}.
\end{align*} 
Notice then that $\Dual' = -H(0)$. On the other hand, notice that $H$ has the following convex conjugate: for $\mu \in \Prob(\Omega)$, 
\begin{alignat*}{2}
    H^*(\mu) &= \sup \left\{ \sca{\varphi}{\mu} - H(\varphi) \mid \varphi \in \Class(\Omega) \right\} \\
    &= \sup \Bigg\{ \sca{\varphi}{\mu} + \int_{\Prob(\Omega)} \sca{\phi^c_\rho}{\rho} \dd \PP(\rho) \mid \varphi \in \Class(\Omega), (\phi_\rho)_\rho \in \L^1(\PP; \Class(\Omega)), \int_{\Prob(\Omega)} \phi_\rho(\cdot) \dd \PP(\rho) = \varphi(\cdot) \Bigg\} \\
    &= \sup \left\{ \int_{\Prob(\Omega)} \sca{ \phi_\rho }{\mu} \dd \PP(\rho) + \int_{\Prob(\Omega)} \sca{\phi^c_\rho}{\rho} \dd \PP(\rho), \quad (\phi_\rho)_\rho \in \L^1(\PP; \Class(\Omega)) \right\} \\
    &= \int_{\Prob(\Omega)} \left( \sup_{\phi_\rho \in \Class(\Omega)} \sca{ \phi_\rho }{\mu} + \sca{\phi^c_\rho}{\rho} \right) \dd \PP(\rho) \\
    &=  \int_{\Prob(\Omega)} \frac{1}{2} \Wass_2^2(\mu, \rho) \dd \PP(\rho),
\end{alignat*}
where we used the Kantorovich duality formula (see for instance \cite{villani2008optimal}) to get to the last line. We thus have $$\min_{\mu \in \Prob(\Omega)} F_\PP(\mu) = \inf_{\mu \in \Prob(\Omega)} H^*(\mu) = - H^{**}(0).$$
Therefore, showing that $\Dual' = \min_{\mu \in \Prob(\Omega)} F_\PP(\mu)$ corresponds to show that $H(0) = H^{**}(0)$. Since $H$ is convex (by concavity of the $c$-transform operation), this will follow from the continuity of $H$ at $0$ for the supremum-norm over $\Class(\Omega)$ (Proposition 4.1 of \cite{convex_analysis}). For this, we can first notice that $H$ never takes the value $-\infty$: for any $\varphi \in \Class(\Omega)$ and $(\phi_\rho)_\rho \in \L^1(\PP; \Class(\Omega))$ such that $\int_{\Prob(\Omega)} \phi_\rho(\cdot) \dd \PP(\rho) = \varphi(\cdot)$, one has
\begin{equation*}
    \forall \rho \in \Prob(\Omega),\quad -\phi_\rho^c(x) = \sup_{y \in \Rsp^d} \phi_\rho(y) - \frac{1}{2}\nr{x - y}^2 \geq \phi_\rho(0) - \frac{1}{2} \nr{x}^2.
\end{equation*}
If follows that 
\begin{equation*}
    H(\varphi) \geq \varphi(0) - \int_{\Prob(\Omega)} \frac{M_2(\rho)}{2} \dd \PP(\rho) > -\infty.
\end{equation*}
On the other hand, notice that $H$ is bounded from above in a neighborhood of $0$ in $\Class(\Omega)$: for any $\varphi \in \Class(\Omega)$ such that $\nr{\varphi}_\infty \leq 1$, one has $-\varphi^c(x) \leq 1$ for any $x \in \Rsp^d$ so that
\begin{equation*}
    H(\varphi) \leq - \int_{\Prob(\Omega)} \sca{(\varphi)^c}{\rho} \dd \PP(\rho) \leq 1.
\end{equation*}
A standard convex analysis result (Proposition 2.5 in \cite{convex_analysis}) then ensures that $H$ is continuous at $0$, so that $H(0) = H^{**}(0)$ and $\Dual' = \min_{\mu \in \Prob(\Omega)} F_\PP(\mu)$.\\
\textbf{Restriction to $\L^\infty(\PP; W^{1, \infty}(\Omega))$.}
We show here that we can run the supremum $\Dual'$ only over $\L^\infty(\PP; W^{1, \infty}(\Omega))$ instead of $\L^1(\PP; \Class(\Omega))$, that is
\begin{align*}
    \Dual' = \sup \Bigg\{ \int_{\Prob(\Omega)} \sca{\phi^c_\rho}{\rho} \dd \PP(\rho) \mid (\phi_\rho)_\rho \in \L^\infty(\PP; W^{1, \infty}(\Omega)), \quad \int_{\Prob(\Omega)} \phi_\rho(\cdot) \dd \PP(\rho) = 0  \Bigg\}.
\end{align*}
Let $(\phi_\rho)_\rho \in \L^1(\PP; \Class(\Omega))$ be an admissible solution to $\Dual'$, i.e. $(\phi_\rho)_\rho$ satisfies
\begin{equation}
\label{eq:admissibility-condition}
    \int_{\Prob(\Omega)} \phi_\rho(\cdot) \dd \PP(\rho) = 0.
\end{equation}
Then we can build from $(\phi_\rho)_\rho$ another admissible solution $(\tilde{\phi}_\rho)_\rho$ that belongs to $\L^\infty(\PP; W^{1, \infty}(\Omega))$ and that performs better at $\Dual'$, i.e. that verifies
\begin{equation}
\label{eq:better-candidate}
    \int_{\Prob(\Omega)} \sca{\tilde{\phi}^c_\rho}{\rho} \dd \PP(\rho) \geq \int_{\Prob(\Omega)} \sca{\phi^c_\rho}{\rho} \dd \PP(\rho).
\end{equation}
Indeed, introduce $(\hat{\phi}_\rho)_\rho := (\phi^{cc}_\rho)_\rho$. Then for all $\rho \in \Prob(\Omega)$, $\hat{\phi}_\rho = \phi^{cc}_\rho$ is obviously $2R$-Lipschitz (as a $c$-transform) and satisfies $\hat{\phi}_\rho^c = \phi_\rho^c$ and $\hat{\phi}_\rho \geq \phi_\rho$ (as a double $c$-transform). Using then \eqref{eq:admissibility-condition}, one has that
\begin{equation*}
     \alpha(\cdot) := \int_{\Prob(\Omega)} \hat{\phi}_\rho(\cdot) \dd \PP(\rho) \geq 0,
\end{equation*}
where $\alpha$ is also $2R$-Lipschitz. Now denoting $\tilde{\phi}_\rho = \hat{\phi_\rho} - \alpha$ for all $\rho \in \Prob(\Omega)$, the mapping $(\tilde{\phi}_\rho)_\rho \in \L^1(\PP; \Class(\Omega))$ is admissible to $\Dual'$ by construction and satisfies $\tilde{\phi}_\rho \leq \hat{\phi}_\rho$ for all $\rho \in \Prob(\Omega)$, so that $\tilde{\phi}^c_\rho \geq \hat{\phi}^c_\rho = \phi^c_\rho$ (using that the $c$-transform is order-reversing). For each $\rho \in \Prob(\Omega)$, up to subtracting $\tilde{\phi}_\rho(0)$ to $\tilde{\phi}_\rho$ (this operation leaves $(\tilde{\phi}_\rho)_\rho$ admissible to $\Dual'$ and does not change its value), one can assume that $\tilde{\phi}_\rho(0) = 0$. Noticing that $\tilde{\phi}_\rho$ is $4R$-Lipschitz by construction, we have the bound $\nr{\tilde{\phi}_\rho}_{W^{1, \infty}(\Omega)} \leq 4R(1+R)$. We thus have built an admissible $(\tilde{\phi}_\rho)_\rho \in \L^\infty(\PP; W^{1,\infty}(\Omega))$ from an admissible $(\phi_\rho)_\rho \in \L^1(\PP; \Class(\Omega))$ that satisfies \eqref{eq:better-candidate}, which shows that we can run the supremum $\Dual'$ only over $\L^\infty(\PP; W^{1, \infty}(\Omega))$ instead of $\L^1(\PP; \Class(\Omega))$\\
\textbf{Existence of a maximizer.} 
There now remains to show that the supremum in $\Dual'$ can be replaced by a maximum. Let $\left((\phi_\rho^n)_\rho\right)_{n\geq0}$ be a maximizing sequence to $\Dual'$, and assume from what precedes that this sequence belongs to $\L^\infty(\PP; W^{1, \infty}(\Omega))$ and satisfies for all $n\geq 0$ and $\rho \in \Prob(\Omega)$, $\nr{\phi^n_\rho}_{W^{1, \infty}(\Omega)} \leq 4R(1+R)$. Further assume that this sequence verifies for all $n \geq 1$,
\begin{equation}
\label{eq:maximizing-values}
    \int_{\Prob(\Omega)} \sca{(\phi^n_\rho)^c}{\rho} \dd \PP(\rho) \geq \Dual' - \frac{1}{n}.
\end{equation}
For any $n \geq 0$, the mapping $(\rho, x) \mapsto \phi^n_\rho(x)$ is bounded in $\L^2(\PP \otimes \lambda)$ where $\lambda$ denotes the Lebesgue measure over $\Omega$. Therefore, by Banach-Alaoglu theorem, the sequence $\left((\phi_\rho^n)_\rho\right)_{n\geq0}$ (seen as a sequence in $\L^2(\PP \otimes \lambda)$) admits a weakly converging subsequence in $\L^2(\PP \otimes \lambda)$, that we do not relabel and for which we denote $(\phi^\infty_\rho)_\rho$ the weak limit in $\L^2(\PP \otimes \lambda)$. Using now Mazur's lemma, we know that there exists a sequence of integers $(N_n)_{n \geq 0}$ and coefficients $((\lambda_{n,k})_{n \leq k \leq N_n})_{n \geq 0} \geq 0$ satisfying for all $n \geq 0$, $\sum_{k=n}^{N_n} \lambda_{n, k} = 1$ such that the sequence $\left((\bar{\phi}_\rho^n)_\rho\right)_{n\geq0}$ defined for all $n \geq 0$ and $\rho \in \Prob(\Omega)$ by $\bar{\phi}_\rho^n := \sum_{k=n}^{N_n} \lambda _{n,k} \phi_\rho^k$ converges strongly to $(\phi^\infty_\rho)_\rho$ in $\L^2(\PP \otimes \lambda)$. By concavity of the $c$-transform operation and equation \eqref{eq:maximizing-values}, we then have the bound 
\begin{align}
\label{eq:bar-phi-max-seq}
    \int_{\Prob(\Omega)} \sca{(\bar{\phi}^n_\rho)^c}{\rho} \dd \PP(\rho) &\geq \sum_{k = n}^{N_n} \lambda_{n,k} \int_{\Prob(\Omega)} \sca{(\phi^k_\rho)^c}{\rho} \dd \PP(\rho) \notag \\
    &\geq \sum_{k = n}^{N_n} \lambda_{n,k} \left( \Dual' - \frac{1}{k} \right) \notag \\
    &\geq \Dual' - \frac{1}{n}.
\end{align}
The sequence $\left((\bar{\phi}_\rho^n)_\rho\right)_{n\geq0}$ is therefore also a maximizing sequence of $\Dual'$ and it also satisfies for any $n \geq 0$ and $\rho \in \Prob(\Omega)$ the bound 
\begin{equation}
    \label{eq:regularity-sequence-cvx-comb}
    \nr{\bar{\phi}^n_\rho}_{W^{1, \infty}(\Omega)} \leq 4R(1+R).
\end{equation}
Since the sequence $\left((\bar{\phi}_\rho^n)_\rho\right)_{n\geq0}$ strongly converges to $(\phi^\infty_\rho)_\rho$ in $\L^2(\PP \otimes \lambda)$, one can extract a subsequence (that we do not relabel) such that for $\PP$-almost-every $\rho \in \Prob(\Omega)$, the sequence $(\bar{\phi}^n_\rho)_{n \geq 0}$ converges to $\phi^\infty_\rho$ in $\L^2(\lambda)$. Using \eqref{eq:regularity-sequence-cvx-comb} and Arzelà-Ascoli theorem, we deduce that for $\PP$-almost-every $\rho \in \Prob(\Omega)$, the sequence $(\bar{\phi}^n_\rho)_{n \geq 0}$ converges uniformly to $\phi^\infty_\rho$ in $\Class(\Omega)$ and that 
\begin{equation*}
    \nr{\phi^\infty_\rho}_{W^{1, \infty}(\Omega)} \leq 4R(1+R).
\end{equation*}
In particular, $(\phi^\infty_\rho)_\rho$ belongs to $\L^\infty(\PP; W^{1, \infty}(\Omega))$ and we have the limit
\begin{equation*}
     0 =\int_{\Prob(\Omega)} \bar{\phi}^n_\rho(\cdot) \dd \PP(\rho) \xrightarrow[n \to \infty]{} \int_{\Prob(\Omega)} \phi^\infty_\rho(\cdot) \dd \PP(\rho),
\end{equation*}
so that  $(\phi^\infty_\rho)_\rho$ is admissible to $\Dual'$. Eventually, for $\PP$-almost-every $\rho \in \Prob(\Omega)$, we have the limit
\begin{equation}
    \sca{(\bar{\phi}^n_\rho)^c}{\rho}  \xrightarrow[n \to \infty]{}  \sca{(\phi^\infty_\rho)^c}{\rho},
\end{equation}
so that by Lebesgue's dominated convergence theorem and the bound \eqref{eq:bar-phi-max-seq},
\begin{equation*}
    \int_{\Prob(\Omega)} \sca{(\phi^\infty_\rho)^c}{\rho}  \dd \PP(\rho) = \lim_{n \to +\infty} \int_{\Prob(\Omega)} \sca{(\bar{\phi}^n_\rho)^c}{\rho} \dd \PP(\rho) = \Dual',
\end{equation*}
which proves that $(\phi^\infty_\rho)_\rho \in \L^\infty(\PP; W^{1, \infty}(\Omega))$ is a maximizer for $\Dual'$.
\end{proof}

\section{Strong-convexity of \texorpdfstring{$\Kant_\rho$}{} for measures with non-convex support}
\label{sec:strong-convexity}

This section gathers occurrences of measures $\rho$ where the strong convexity estimate \eqref{it:ft-convexite} of Assumption \ref{assump:reg-rho} is verified. 

\subsection{Measures with convex support}
This result is mostly extracted from \cite{delalande:hal-03164147}. 
\begin{proposition}
\label{prop:strong-convexity-kanto-func}
Let $\rho \in \Prob_{a.c.}(\Omega)$. Assume that $\spt(\rho)$ is convex and that there exists $m_\rho, M_\rho \in (0, +\infty)$ such that $m_\rho \leq \rho \leq M_\rho$ on $\spt(\rho)$. Let $\psi, \tilde{\psi} \in \Class(\Omega)$. Then
\begin{align*}
    \sca{\psi - \tilde{\psi}}{(\nabla \psi^*)_\# \rho} + C_{d, R, m_\rho, M_\rho} \Var_{\rho}(\tilde{\psi}^* - \psi^*) \leq \Kant_\rho(\tilde{\psi}) - \Kant_\rho(\psi),
\end{align*}
where $C_{d,R, m_\rho, M_\rho} = \left( e(d+1)2^{d+1} R \diam(\spt(\rho)) \left( \frac{M_\rho}{m_\rho} \right)^2 \right)^{-1}$.
\end{proposition}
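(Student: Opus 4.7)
The plan is to follow the strategy of \cite{delalande:hal-03164147}, which handles exactly this setting using the Brascamp--Lieb concentration inequality. The argument rests on three successive ingredients, the first being essentially book-keeping, the second a Poincaré estimate, and the third the actual quantitative transport estimate.

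First I would recast the convexity gap in integral form. Setting $T := \nabla\psi^*$ (defined $\rho$-a.e.\ since $\psi^*$ is $R$-Lipschitz on $\Omega$ and $\rho$ is absolutely continuous), the Young--Fenchel identity $\psi^*(x) + \psi(T(x)) = \langle x, T(x)\rangle$ holds $\rho$-a.e., so a direct computation gives
\[
\Delta := \Kant_\rho(\tilde\psi) - \Kant_\rho(\psi) - \sca{\psi - \tilde\psi}{(\nabla\psi^*)_\#\rho} = \int_\Omega\bigl[\tilde\psi^*(x) + \tilde\psi(T(x)) - \langle x, T(x)\rangle\bigr]\,\dd\rho(x),
\]
with $\rho$-a.e.\ non-negative integrand (by the reverse Young--Fenchel inequality for $\tilde\psi$). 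Rewriting $\tilde\psi^*(x) = \langle x, \nabla\tilde\psi^*(x)\rangle - \tilde\psi^{**}(\nabla\tilde\psi^*(x))$ $\rho$-a.e., the integrand is seen to be a Bregman-type divergence of $\tilde\psi^{**}$ between $T(x)$ and $\tilde T(x) := \nabla\tilde\psi^*(x)$.

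Second, I would prove a Poincaré--Wirtinger inequality for $\rho$ of the form $\Var_\rho(f) \leq C_1\,\nr{\nabla f}_{\L^2(\rho)}^2$ for Lipschitz $f$ on $\Omega$. The convexity of $\spt(\rho)$ lets one apply the Brascamp--Lieb inequality to the uniform measure on $\spt(\rho)$; this produces a Poincaré constant controlled by $\diam(\spt(\rho))$ with a dimensional prefactor of order $e(d+1)2^{d+1}$. The Holley--Stroock perturbation principle then transfers this bound to $\rho$, paying a multiplicative factor $(M_\rho/m_\rho)^2$ reflecting the density pinching. Applied to $f = \tilde\psi^* - \psi^*$, it controls $\Var_\rho(\tilde\psi^* - \psi^*)$ by $\nr{T - \tilde T}_{\L^2(\rho)}^2$ with a constant of exactly the shape required in the statement.

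Third, I would close the loop by bounding $\nr{T - \tilde T}_{\L^2(\rho)}^2$ from above by a multiple of $\Delta$. This is the main obstacle: the Bregman integrand identified in step one is not obviously coercive in $|T(x) - \tilde T(x)|^2$ because $\tilde\psi^{**}$ is not assumed strongly convex. The trick, detailed in \cite{delalande:hal-03164147}, is to parametrize $\psi_s^* := (1-s)\psi^* + s\tilde\psi^*$ for $s\in[0,1]$ (which remains convex), to observe that $\nabla\psi_s^*(x)\in\overline{B(0,R)}$ throughout because each $\psi_s^*$ is $R$-Lipschitz, and then to integrate the monotonicity of $s \mapsto \nabla\psi_s^*$ along the segment; combined with the lower bound $m_\rho$ on the density, this telescopes into the desired quadratic lower bound with an extra prefactor of order $R^{-1}$. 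Assembling steps two and three yields the announced inequality, and tracking the explicit constants through Brascamp--Lieb, Holley--Stroock, and the telescoping step reproduces exactly $C_{d,R,m_\rho,M_\rho} = \bigl(e(d+1)2^{d+1}R\,\diam(\spt(\rho))(M_\rho/m_\rho)^2\bigr)^{-1}$.
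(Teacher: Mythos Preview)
Your Steps 1 and 2 are reasonable in spirit, but Step 3 is where the argument breaks down, and this is precisely the place where the paper (following \cite{delalande:hal-03164147}) does something substantially different from what you describe.

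You are claiming, in effect, that
\[
\Delta \;=\; \int_\Omega D_{\tilde\psi}\bigl(T(x),\tilde T(x)\bigr)\,\dd\rho(x) \;\gtrsim\; \frac{1}{R}\,\nr{T-\tilde T}_{\L^2(\rho)}^2,
\]
but the justification you give --- interpolating $\psi_s^*=(1-s)\psi^*+s\tilde\psi^*$ and ``integrating the monotonicity of $s\mapsto\nabla\psi_s^*$'' --- does not produce this. For fixed $x$ the map $s\mapsto\nabla\psi_s^*(x)=(1-s)T(x)+s\tilde T(x)$ is simply the line segment from $T(x)$ to $\tilde T(x)$; there is no monotonicity in $s$ to exploit, and no telescoping identity along that segment yields a quadratic lower bound on the Bregman divergence of $\tilde\psi$, which is only convex and not strongly convex. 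This is also \emph{not} what \cite{delalande:hal-03164147} does, so the appeal to that reference here is misplaced.

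The paper's actual route sidesteps this obstacle entirely. One interpolates the \emph{primal} potentials $\psi^t=(1-t)\psi+t\tilde\psi$ (not their conjugates), differentiates $t\mapsto\Kant_\rho(\psi^t)$ twice to obtain
\[
\frac{\dd^2}{\dd t^2}\Kant_\rho(\psi^t)=\int_\Omega\bigl\langle\nabla v(\nabla(\psi^t)^*),\,\DD^2(\psi^t)^*\cdot\nabla v(\nabla(\psi^t)^*)\bigr\rangle\,\dd\rho,\qquad v=\tilde\psi-\psi,
\]
and then applies Brascamp--Lieb not to $\rho$ but to the pushforward $(\nabla(\psi^t)^*)_\#\rho$, whose log-density involves $\DD^2(\psi^t)^*$; together with the log-concavity of the determinant on s.d.p.\ matrices this gives a lower bound $\gtrsim\min(t,1-t)^d\,\Var_{\frac12(\mu+\tilde\mu)}(\tilde\psi-\psi)$. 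After integrating in $t$, a separate convex-analysis inequality $\Var_\rho(\tilde\psi^*-\psi^*)\le 2\Var_{\frac12(\mu+\tilde\mu)}(\tilde\psi-\psi)$ converts the target-side variance into the source-side one. So the variance controlled directly is that of $\tilde\psi-\psi$ against $\frac12(\mu+\tilde\mu)$, and at no point is $\nr{T-\tilde T}_{\L^2(\rho)}^2$ compared to $\Delta$; your Step 3 is replaced by a completely different mechanism, and the constants (in particular the single factors $R$ and $\diam(\spt(\rho))$ rather than $\diam(\spt(\rho))^2$ from a naive Poincaré) emerge from that route.
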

\begin{proof}
We only present here a formal sketch of the proof, which heavily relies on computations done in Section 2 of \cite{delalande:hal-03164147}. Assuming that $\psi$ and $\tilde{\psi}$ are smooth enough (see Proposition 2.4 of \cite{delalande:hal-03164147}) and introducing for $t \in [0,1], \psi^t = (1-t) \psi + t \tilde{\psi}$, Proposition 2.2 of \cite{delalande:hal-03164147} allows to differentiate $\Kant_\rho(\psi^t)$ with respect to $t$ and to obtain:
\begin{align}
\label{eq:formula-diff-K}
    \Kant_\rho(\tilde{\psi}) &- \Kant_\rho(\psi) = \frac{\dd}{\dd t} \Kant_\rho(\psi^t) \Big\vert_{t=0} + \int_0^1 \int_0^s \frac{\dd^2}{\dd t^2} \Kant_\rho(\psi^t) \dd t \dd s \notag \\
    &= \sca{\psi - \tilde{\psi}}{(\nabla \psi^*)_\# \rho} + \int_0^1 \int_0^s \int_\Omega \sca{\nabla v(\nabla (\psi^t)^*)}{\DD^2 (\psi^t)^* \cdot \nabla v(\nabla (\psi^t)^*)} \dd \rho \dd t \dd s,
\end{align}
were $v = \tilde{\psi} - \psi$. Reasoning as in the proof of Proposition 2.4 of \cite{delalande:hal-03164147}, the Brascamp-Lieb concentration inequality \cite{brascamp-lieb} and the log-concavity of the determinant seen as an application on the set of s.d.p. matrices ensure the following bound:
\begin{align*}
    C_{R, m_\rho, M_\rho} \min(t, 1-t)^d 2 \Var_{\frac{1}{2}(\mu + \tilde{\mu})}(\tilde{\psi} - \psi) \leq \int_\Omega \sca{\nabla v(\nabla (\psi^t)^*)}{\DD^2 (\psi^t)^* \cdot \nabla v(\nabla (\psi^t)^*)} \dd \rho,
\end{align*}
where $C_{R, m_\rho, M_\rho} = \left( e R \diam(\spt(\rho)) \left( \frac{M_\rho}{m_\rho} \right)^2 \right)^{-1}$, $\mu = (\nabla \psi^*)_\# \rho$ and $\tilde{\mu} = (\nabla \tilde{\psi})_\# \rho$. Back to \eqref{eq:formula-diff-K}, this leads to
\begin{align*}
    \sca{\psi - \tilde{\psi}}{(\nabla \psi^*)_\# \rho} + C_{d, R, m_\rho, M_\rho} 2 \Var_{\frac{1}{2}(\mu + \tilde{\mu})}(\tilde{\psi} - \psi) \leq \Kant_\rho(\tilde{\psi}) - \Kant_\rho(\psi),
\end{align*}
where $C_{d,R, m_\rho, M_\rho} = \left( e(d+1)2^{d+1} R \diam(\spt(\rho)) \left( \frac{M_\rho}{m_\rho} \right)^2 \right)^{-1}$. We conclude using the convex analysis argument of Proposition 3.1 from \cite{delalande:hal-03164147}, which directly ensures
$$ \Var_{\rho}(\tilde{\psi}^* - \psi^*) \leq 2 \Var_{\frac{1}{2}(\mu + \tilde{\mu})}(\tilde{\psi} - \psi). $$ We get the general case (without the smoothness assumptions on $\psi$ and $\tilde{\psi}$) using approximation arguments presented in Proposition 2.5 and 2.7 of \cite{delalande:hal-03164147}.
\end{proof}

\subsection{Measures with connected union of convex sets as support}
We extend Proposition \ref{prop:strong-convexity-kanto-func} to the case of a source measure $\rho$ with a possibly non-convex support.  We will assume that $\spt(\rho)$ can be written as a connected finite union of convex sets. 
\begin{proposition}
\label{prop:strong-convexity-kanto-func-2}
Let $\rho \in \Prob_{a.c.}(\Omega)$ such that there exists $m_\rho, M_\rho \in (0, +\infty)$ verifying $m_\rho \leq \rho \leq M_\rho$ on $\spt(\rho)$. Assume that $\spt(\rho)$ is connected and that there exists $N\geq 1$ convex sets $(C_i)_{1 \leq i \leq N}$ in $\Omega$ such that $\spt(\rho) = \bigcup_{i=1}^N C_i$. Also assume that for any $i \neq j$ such that $C_i \cap C_j \neq \emptyset$, one has $\rho(C_i \cap C_j) > 0$. Then there exists a constant $c_\rho$ depending on $\rho$ such that for any $\psi, \tilde{\psi} \in \Class(\Omega)$,
\begin{align*}
    \sca{\psi - \tilde{\psi}}{(\nabla \psi^*)_\# \rho} + c_\rho \Var_{\rho}(\tilde{\psi}^* - \psi^*) \leq \Kant_\rho(\tilde{\psi}) - \Kant_\rho(\psi).
\end{align*}
\end{proposition}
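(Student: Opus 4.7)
The plan is to reduce to the convex case already handled in Proposition~\ref{prop:strong-convexity-kanto-func} and then to piece the local estimates together. Two observations are the backbone of the argument. First, the Bregman-type gap
\[ B_{\Kant_\rho}(\tilde\psi,\psi) := \Kant_\rho(\tilde\psi) - \Kant_\rho(\psi) - \langle\psi - \tilde\psi, (\nabla\psi^*)_\#\rho\rangle = \int_\Omega g\,\d\rho, \]
with $g(x) := \tilde\psi^*(x) - \psi^*(x) + (\tilde\psi - \psi)(\nabla\psi^*(x)) \geq 0$ (Fenchel--Young applied to $\psi^*, \tilde\psi^*$ at the point $\nabla\psi^*(x)$), is linear in $\rho$ with a pointwise non-negative integrand. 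Second, the standing hypothesis that $\rho(C_i \cap C_j) > 0$ whenever $C_i \cap C_j \neq \emptyset$, together with connectedness of $\spt(\rho)$, makes the \emph{overlap graph} (vertices $\{1,\dots,N\}$, edges $\{i,j\}$ whenever $\rho(C_i \cap C_j) > 0$) connected, which is what will allow transferring information between the convex pieces.

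First I would introduce the normalized restrictions $\rho^{(i)} := \rho|_{C_i}/\rho(C_i)$. Each $\rho^{(i)}$ has convex support $C_i$ and density in $[m_\rho/\rho(C_i), M_\rho/\rho(C_i)]$, so the ratio $M_\rho/m_\rho$ is preserved and Proposition~\ref{prop:strong-convexity-kanto-func} yields a common constant $c_* > 0$ with
\[ c_*\, \Var_{\rho^{(i)}}(u) \leq B_{\Kant_{\rho^{(i)}}}(\tilde\psi,\psi), \qquad u := \tilde\psi^* - \psi^*, \qquad i = 1,\ldots,N. \]
Multiplying by $\rho(C_i)$, summing, and using the linearity of $B_{\Kant_\cdot}$ together with the pointwise bound $n(x) := \#\{i : x \in C_i\} \leq N$ on $\spt(\rho)$, I get
\[ c_* \sum_i \rho(C_i)\,\Var_{\rho^{(i)}}(u) \leq \int_\Omega n\,g\,\d\rho \leq N\,B_{\Kant_\rho}(\tilde\psi,\psi). \]

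It then remains to dominate $\Var_\rho(u)$ by $\sum_i \rho(C_i)\Var_{\rho^{(i)}}(u)$. With $u_i := \Esp_{\rho^{(i)}} u$ and the disjoint partition $E_i := C_i \setminus \bigcup_{j<i} C_j \subseteq C_i$ of $\spt(\rho)$,
\[ \Var_\rho(u) \leq \Esp_\rho(u - u_1)^2 = \sum_i \int_{E_i}(u - u_1)^2\,\d\rho \leq \sum_i \rho(C_i)\bigl[\Var_{\rho^{(i)}}(u) + (u_i - u_1)^2\bigr]. \]
For the between-piece term, for any edge $\{i,j\}$ with $A := C_i \cap C_j$, Cauchy--Schwarz gives
\[ (u_i - u_A)^2 = \Bigl(\tfrac{1}{\rho(A)}\int_A(u_i - u)\,\d\rho\Bigr)^2 \leq \tfrac{\rho(C_i)}{\rho(A)}\,\Var_{\rho^{(i)}}(u), \]
and similarly for $j$, where $u_A := \tfrac{1}{\rho(A)}\int_A u\,\d\rho$. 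Chaining squared differences along a spanning-tree path of length at most $N-1$ in the overlap graph from vertex $1$ to vertex $i$, and using the positive lower bounds $\delta_\rho := \min\{\rho(C_i \cap C_j) : \rho(C_i \cap C_j) > 0\} > 0$ and $\rho_{\min} := \min_i \rho(C_i) > 0$, one obtains a constant $K_\rho > 0$ depending only on $N$, $\delta_\rho$ and $\rho_{\min}$ such that $\Var_\rho(u) \leq K_\rho \sum_i \rho(C_i)\Var_{\rho^{(i)}}(u)$. Combining with the preceding bound yields the conclusion with $c_\rho := c_*/(N K_\rho)$.

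The main difficulty is the chaining step: transferring the piecewise Poincaré-type control into a global bound on $\Var_\rho(u)$ forces us to compare conditional means on distinct convex pieces, and this is only possible because the overlap graph is connected and each of its edges carries a positive $\rho$-mass $\geq \delta_\rho$; without this assumption the between-piece term in the variance decomposition cannot be dominated, and the proof would break down.
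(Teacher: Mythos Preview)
Your proof is correct and follows the same overall strategy as the paper: apply Proposition~\ref{prop:strong-convexity-kanto-func} on each convex piece $C_i$, observe that the Bregman gap is an integral of a pointwise non-negative function (Fenchel--Young), sum the localized inequalities with the multiplicity bound $n(x)\leq N$, and finally show that $\Var_\rho(u)$ is controlled by $\sum_i\rho(C_i)\Var_{\rho^{(i)}}(u)$ via the connectivity of the overlap graph.

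The only genuine difference is in this last gluing step. The paper controls the between-piece discrepancy $\sum_{i,j}(m_i-m_j)^2\rho(C_i)\rho(C_j)$ through the spectral gap $\lambda_2(L)$ of the weighted graph Laplacian with weights $w_{ij}=\rho(C_i\cap C_j)$, whereas you chain squared differences of means along a spanning tree, paying only the minimum overlap mass $\delta_\rho$. Your route is more elementary (no spectral argument), while the paper's constant in terms of $\lambda_2(L)$ is what allows, in the subsequent remark, to relate $c_\rho$ back to the Cheeger and Poincar\'e--Wirtinger constants of $\rho$. Otherwise the two arguments are interchangeable.
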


\begin{remark}[Constant $c_\rho$ and Poincaré-Wirtinger constant of $\rho$] 
The constant $c_\rho$ of Proposition \ref{prop:strong-convexity-kanto-func-2} is not made precise in the statement. A look at the proof of this proposition only allows to bound $c_\rho$ in terms of the second smallest eigenvalue $\lambda_2(L)$ of a weighted graph Laplacian $L$, that is built from the graph whose vertices are the convex sets $C_i$ and whose edge weights are the masses $\rho(C_i \cap C_j)$ that $\rho$ grants to the intersection of the convex sets $C_i$ and $C_j$. The constant $c_\rho$ then reads:
$$c_\rho = \left( e(d+1)2^{d+1} R^2 \left( \frac{M_\rho}{m_\rho} \right)^2  \left(N^2 + \frac{ 2 N^3}{\lambda_2(L)} \right)  \right)^{-1}.$$
The quantity $\lambda_2(L)$ is not explicit, but it can be linked to the \emph{weighted Cheeger constant} of $\rho$, defined by
\begin{equation*}
    h(\rho) = \inf_{A \subset \spt(\rho)} \frac{ \abs{\partial A}_\rho }{ \min(\rho(A), \rho(\spt(\rho) \setminus A)) },
\end{equation*}
where $\abs{\partial A}_\rho = \int_{\partial A \cap \interior(\spt(\rho))} \rho(x) \dd \Haus^{d-1}(x)$ and where the infimum is taken over Lipschitz domains $A \subset \interior(\spt(\rho))$ with boundary of finite $\Haus^{d-1}$-measure. Quoting \cite{Kitagawa2019ANA} (Lemma 5.3), this constant can in turn be linked to the $\L^1$ Poincaré-Wirtinger constant $C_{PW}(\rho)$ of $\rho$. Indeed, $h(\rho)$ is positive whenever $\rho$ satisfies an $\L^1$ Poincaré-Wirtinger inequality, i.e. whenever there exists a finite $C_{PW}(\rho) > 0$ such that for all smooth function $f$ on $\Omega$,
$$ \nr{f - \Esp_\rho f}_{\L^1(\rho)} \leq C_{PW}(\rho) \nr{\nabla f}_{\L^1(\rho; \Rsp^d)}. $$
The Poincaré-Wirtinger constant $C_{PW}(\rho)$ and the Cheeger constant $h(\rho)$ are then related by the inequality
\begin{equation*}
    h(\rho) \geq \frac{2}{C_{PW}(\rho)}.
\end{equation*}
Using ideas similar to the ones found in Section 5.2 of \cite{Kitagawa2019ANA}, the eigenvalue $\lambda_2(L)$ can be bounded in terms of the Cheeger constant of $\rho$, and thus in terms of $C_{PW}(\rho)$. We do not detail this comparison here but only report that $c_\rho$ may be written
$$c_\rho =  \left( e(d+1)2^{d+1} R^2 \left( \frac{M_\rho}{m_\rho} \right)^2 N \left(N + \frac{1}{2}\left(\frac{ M_\rho s_{d-1} R^{d-1} N^2 C_{PW}(\rho)}{ \eps^2  }\right)^3 \right) \right)^{-1},$$
where $s_{d-1}$ denotes the surface area of the unit sphere in $\Rsp^d$ and
\begin{equation*}
    \eps= \min \left( \min_{i, j \vert C_i \cap C_j \neq \emptyset} \rho(C_i \cap C_j), \min_i \rho\left(C_i \setminus \cup_{j \neq i} C_j \right) \right) > 0.
\end{equation*} 
\end{remark}

\begin{proof}[Proof of Proposition~\ref{prop:strong-convexity-kanto-func-2}]
Let's denote for now $f = \tilde{\psi}^* - \psi^*$. We will first exploit a discrete Laplacian over $\X = \spt(\rho)$ in order to upper bound $\Var_\rho(f)$ by a sum of variances of $f$ w.r.t. probability measures supported over the convex sets $(C_i)_i$. We will then use Proposition \ref{prop:strong-convexity-kanto-func} to conclude.

For any $i \in \{1,\dots,N\}$, we denote $\rho_i = \frac{1}{\rho(C_i)} \rho_{\vert C_i}$ and $m_i = \int_{C_i} f \dd \rho_i$.
Then one has the following bound:
\begin{align}
\label{eq:bound-var-sum-sum-var}
    \Var_\rho(f) &= \frac{1}{2} \int_{\X \times \X} (f(x) - f(y))^2 \dd \rho(x) \dd \rho(y) \notag \\
    &\leq\frac{1}{2} \sum_{i,j} \int_{C_i \times C_j} (f(x) - f(y))^2 \dd \rho(x) \dd \rho(y) \notag \\
    &= \frac{1}{2} \sum_{i,j} \int_{C_i \times C_j} (f(x) - m_i + m_i - m_j + m_j - f(y))^2 \dd \rho(x) \dd \rho(y) \notag \\
    &= \left(\sum_i \rho(C_i) \right) \sum_i \int_{C_i} (f(x) - m_i)^2 \dd \rho(x) + \frac{1}{2} \sum_{i,j} (m_i - m_j)^2 \rho(C_i) \rho(C_j) \notag \\
    &= \left(\sum_i \rho(C_i) \right) \sum_i \rho(C_i) \Var_{\rho_i}(f) + \frac{1}{2} \sum_{i,j} (m_i - m_j)^2 \rho(C_i) \rho(C_j).
\end{align}

We now consider the graph $G = (\{C_i\}_{1 \leq i \leq N}, \{w_{ij}\}_{1 \leq i,j \leq N})$ with vertices $\{C_i\}_{1 \leq i \leq N}$ and weighted edges $\{w_{ij}\}_{1 \leq i,j \leq N}$ defined by
\begin{equation*}
    \forall i,j \in \{1, \dots, N\}, \quad w_{ij} = \rho(C_i \cap C_j).
\end{equation*}
By construction, this graph has a single connected component. We introduce the weighted Laplacian matrix $L \in \Rsp^{N\times N}$ of $G$ as follows:
\begin{equation*}
    \forall i,j \in \{1, \dots, N\}, \quad L_{ij} = \left\{
    \begin{array}{ll}
        \sum_{k} w_{ik} & \mbox{if } i = j, \\
        -w_{ij} & \mbox{else.}
    \end{array}
\right.
\end{equation*}
Then $L$ is a symmetric and positive semi-definite matrix. Its null space is made of constant vectors and we denote $\lambda_2(L)$ its second smallest eigenvalue, which is non-zero. Denoting $m = (m_i)_{1\leq i \leq N} \in \Rsp^N$, we introduce $\bar{m} = \left(\frac{1}{N} \sum_i m_i\right) \mathds{1}_N \in \Rsp^N$ the constant vector whose coordinates equal the mean of $m$ (we use $\mathds{1}_N = (1)_{1\leq i \leq N} \in \Rsp^N$). Notice that $m - \bar{m}$ is in the orthogonal to the null space of $L$, ensuring the following bound:
\begin{align}
\label{eq:bound-laplacian-1}
    \frac{1}{2} \sum_{i,j} (m_i - m_j)^2 \rho(C_i) \rho(C_j) &\leq N^2 \frac{1}{2} \sum_{i,j} (m_i - m_j)^2 \frac{1}{N^2} \notag \\
    &=  N \nr{m - \bar{m} }^2 \notag \\
    &\leq \frac{ N  }{\lambda_2(L)} \sca{m - \bar{m}}{L \left(m - \bar{m}\right)} \notag \\
    &= \frac{  N  }{\lambda_2(L)}  \sum_{i, j} w_{ij} (m_i^2 - m_i m_j) \notag \\
    &= \frac{  N  }{\lambda_2(L)} \sum_{i, j} \frac{w_{ij}}{2} (m_i - m_j)^2.
\end{align}
But for any $i,j$ such that $w_{ij}>0$, denoting $m_{i \cap j} = \frac{1}{\rho(C_i \cap C_j)} \int_{C_i \cap C_j} f \dd \rho$, one has
\begin{align*}
    \frac{1}{2} (m_i - m_j)^2 \leq (m_{i \cap j} - m_i)^2 + (m_{i \cap j} - m_j)^2.
\end{align*}
And for such $i,j$,
\begin{align*}
    (m_{i \cap j} - m_i)^2 &= \left( \frac{1}{\rho(C_i \cap C_j)} \int_{C_i \cap C_j} (f - m_i) \dd \rho \right)^2 \\
    &\leq \frac{1}{\rho(C_i \cap C_j)} \int_{C_i} (f - m_i)^2 \dd \rho \\
    &= \frac{\rho(C_i)}{w_{ij}} \Var_{\rho_i}(f),
\end{align*}
where we used Jensen's inequality and the fact that $C_i \cap C_j \subset C_i$. A similar bound can be shown for $(m_{i \cap j} - m_j)^2$, and plugging these into \eqref{eq:bound-laplacian-1} yields
\begin{align*}
    \frac{1}{2} \sum_{i,j} (m_i - m_j)^2 \rho(C_i) \rho(C_j) &\leq \frac{  N  }{\lambda_2(L)}  \sum_{i} \sum_{j \vert C_i \cap C_j \neq \emptyset } \left(\rho(C_i) \Var_{\rho_i}(f) + \rho(C_j) \Var_{\rho_j}(f)\right) \\
    &\leq \frac{  2 N^2  }{\lambda_2(L)}  \sum_{i} \rho(C_i) \Var_{\rho_i}(f).
\end{align*}
Injecting this into \eqref{eq:bound-var-sum-sum-var} yields
\begin{align}
\label{eq:bound-var-sum-sum-var-2}
    \Var_\rho(f) \leq \left(N + \frac{ 2 N^2}{\lambda_2(L)} \right) \sum_i \rho(C_i) \Var_{\rho_i}(f).
\end{align}
Now recalling that $f = \psi - \tilde{\psi}$, we have by Proposition \ref{prop:strong-convexity-kanto-func} for any $i\in \{1, \dots, N\}$ that
\begin{align*}
    \sca{\psi - \tilde{\psi}}{(\nabla \psi^*)_\# \rho_i} + C_{d,R, m_\rho, M_\rho} \Var_{\rho_i}(\tilde{\psi}^* - \psi^*) \leq \Kant_{\rho_i}(\tilde{\psi}) - \Kant_{\rho_i}(\psi),
\end{align*}
where $C_{d,R, m_\rho, M_\rho}= \left( e(d+1)2^{d+1} R^2 \left( \frac{M_\rho}{m_\rho} \right)^2 \right)^{-1} $. Weighting this last inequality with $\rho(C_i)$ and summing over $i \in \{1, \dots, N\}$, this raises
\begin{align*}
    \sca{\psi - \tilde{\psi}}{(\nabla \psi^*)_\# \rho} + \frac{C_{d,R, m_\rho, M_\rho}}{N} \sum_{i=1}^N \rho(C_i) \Var_{\rho_i}(\tilde{\psi}^* - \psi^*) \leq \Kant_{\rho}(\tilde{\psi}) - \Kant_{\rho}(\psi).
\end{align*}
Using \eqref{eq:bound-var-sum-sum-var-2} eventually gives
\begin{align*}
    \sca{\psi - \tilde{\psi}}{(\nabla \psi^*)_\# \rho} + c_{\rho} \Var_{\rho}(\tilde{\psi}^* - \psi^*) \leq \Kant_{\rho}(\tilde{\psi}) - \Kant_{\rho}(\psi),
\end{align*}
where $c_\rho = \left( e(d+1)2^{d+1} R^2 \left( \frac{M_\rho}{m_\rho} \right)^2  \left(N^2 + \frac{ 2 N^3}{\lambda_2(L)} \right)  \right)^{-1}$.
\end{proof}

\bibliographystyle{plain}
\bibliography{ref}

\begin{thebibliography}{10}

\bibitem{agueh:hal-00637399}
Martial Agueh and Guillaume Carlier.
\newblock {Barycenters in the Wasserstein space}.
\newblock {\em {SIAM Journal on Mathematical Analysis}}, 43(2):904--924, 2011.

\bibitem{Ahidar-Coutrix2020}
A.~Ahidar-Coutrix, T.~Le~Gouic, and Q.~Paris.
\newblock Convergence rates for empirical barycenters in metric spaces:
  curvature, convexity and extendable geodesics.
\newblock {\em Probability Theory and Related Fields}, 177(1):323--368, Jun
  2020.

\bibitem{JMLR:v22:20-588}
Jason~M Altschuler and Enric Boix-Adsera.
\newblock Wasserstein barycenters can be computed in polynomial time in fixed
  dimension.
\newblock {\em Journal of Machine Learning Research}, 22(44):1--19, 2021.

\bibitem{IBP}
Jean-David Benamou, Guillaume Carlier, Marco Cuturi, Luca Nenna, and Gabriel
  Peyr\'{e}.
\newblock Iterative bregman projections for regularized transportation
  problems.
\newblock {\em SIAM Journal on Scientific Computing}, 37(2):A1111--A1138, 2015.

\bibitem{bigot:hal-01564007}
J{\'e}r{\'e}mie Bigot, Elsa Cazelles, and Nicolas Papadakis.
\newblock {Penalization of Barycenters in the Wasserstein Space}.
\newblock {\em {SIAM Journal on Mathematical Analysis}}, 51(3):2261--2285,
  2019.

\bibitem{wass_bar_1d_stats}
Jérémie Bigot, Raúl Gouet, Thierry Klein, and Alfredo López.
\newblock {Upper and lower risk bounds for estimating the Wasserstein
  barycenter of random measures on the real line}.
\newblock {\em Electronic Journal of Statistics}, 12(2):2253 -- 2289, 2018.

\bibitem{bigot-klein}
{Bigot, J\'er\'emie} and {Klein, Thierry}.
\newblock Characterization of barycenters in the wasserstein space by averaging
  optimal transport maps.
\newblock {\em ESAIM: PS}, 22:35--57, 2018.

\bibitem{boissard-legouic-loubes}
Emmanuel Boissard, Thibaut~Le Gouic, and Jean-Michel Loubes.
\newblock {Distribution’s template estimate with Wasserstein metrics}.
\newblock {\em Bernoulli}, 21(2):740 -- 759, 2015.

\bibitem{brascamp-lieb}
{Herm Jan} Brascamp and {Elliott H.} Lieb.
\newblock On extensions of the {B}runn-{M}inkowski and {P}r{\'e}kopa-{L}eindler
  theorems, including inequalities for log concave functions, and with an
  application to the diffusion equation.
\newblock {\em Journal of Functional Analysis}, 22(4):366--389, August 1976.

\bibitem{Brenier}
Yann Brenier.
\newblock Polar factorization and monotone rearrangement of vector-valued
  functions.
\newblock {\em Communications on Pure and Applied Mathematics}, 44(4):375--417,
  1991.

\bibitem{entropic-barycenters}
Guillaume Carlier, Katharina Eichinger, and Alexey Kroshnin.
\newblock Entropic-wasserstein barycenters: Pde characterization, regularity,
  and clt.
\newblock {\em SIAM Journal on Mathematical Analysis}, 53(5):5880--5914, 2021.

\bibitem{refId0}
{Carlier, Guillaume}, {Oberman, Adam}, and {Oudet, Edouard}.
\newblock Numerical methods for matching for teams and wasserstein barycenters.
\newblock {\em ESAIM: M2AN}, 49(6):1621--1642, 2015.

\bibitem{pmlr-v125-chewi20a}
Sinho Chewi, Tyler Maunu, Philippe Rigollet, and {Austin J.} Stromme.
\newblock Gradient descent algorithms for {B}ures-{W}asserstein barycenters.
\newblock In Jacob Abernethy and Shivani Agarwal, editors, {\em Proceedings of
  Thirty Third Conference on Learning Theory}, volume 125 of {\em Proceedings
  of Machine Learning Research}, pages 1276--1304. PMLR, 09--12 Jul 2020.

\bibitem{language_processing_2}
Pierre Colombo, Guillaume Staerman, Pablo Piantanida, and Chlo{\'e} Clavel.
\newblock {Automatic Text Evaluation through the Lens of Wasserstein
  Barycenters}.
\newblock In {\em {EMNLP 2021}}, Punta Cana, Dominica, November 2021.

\bibitem{pmlr-v32-cuturi14}
Marco Cuturi and Arnaud Doucet.
\newblock Fast computation of wasserstein barycenters.
\newblock In Eric~P. Xing and Tony Jebara, editors, {\em Proceedings of the
  31st International Conference on Machine Learning}, volume 32(2) of {\em
  Proceedings of Machine Learning Research}, pages 685--693, Bejing, China,
  22--24 Jun 2014. PMLR.

\bibitem{pmlr-v151-delalande22a}
Alex Delalande.
\newblock Nearly tight convergence bounds for semi-discrete entropic optimal
  transport.
\newblock In Gustau Camps-Valls, Francisco J.~R. Ruiz, and Isabel Valera,
  editors, {\em Proceedings of The 25th International Conference on Artificial
  Intelligence and Statistics}, volume 151 of {\em Proceedings of Machine
  Learning Research}, pages 1619--1642. PMLR, 28--30 Mar 2022.

\bibitem{delalande:tel-03935445}
Alex Delalande.
\newblock {\em {Quantitative Stability in Quadratic Optimal Transport}}.
\newblock Theses, {Universit{\'e} Paris-Saclay}, December 2022.

\bibitem{delalande:hal-03164147}
Alex Delalande and Quentin Mérigot.
\newblock Quantitative stability of optimal transport maps under variations of
  the target measure.
\newblock {\em Duke Mathematical Journal}, 2022.

\bibitem{language_processing_1}
Pierre Dognin, Igor Melnyk, Youssef Mroueh, Jarret Ross, Cicero~Dos Santos, and
  Tom Sercu.
\newblock Wasserstein barycenter model ensembling.
\newblock In {\em International Conference on Learning Representations}, 2019.

\bibitem{convex_analysis}
Ivar Ekeland and Roger Témam.
\newblock {\em Convex Analysis and Variational Problems}.
\newblock Society for Industrial and Applied Mathematics, 1999.

\bibitem{Fournier2015}
Nicolas Fournier and Arnaud Guillin.
\newblock On the rate of convergence in wasserstein distance of the empirical
  measure.
\newblock {\em Probability Theory and Related Fields}, 162(3):707--738, Aug
  2015.

\bibitem{pmlr-v70-ho17a}
Nhat Ho, XuanLong Nguyen, Mikhail Yurochkin, Hung~Hai Bui, Viet Huynh, and Dinh
  Phung.
\newblock Multilevel clustering via {W}asserstein means.
\newblock In Doina Precup and Yee~Whye Teh, editors, {\em Proceedings of the
  34th International Conference on Machine Learning}, volume~70 of {\em
  Proceedings of Machine Learning Research}, pages 1501--1509. PMLR, 06--11 Aug
  2017.

\bibitem{KIM2017640}
Young-Heon Kim and Brendan Pass.
\newblock Wasserstein barycenters over riemannian manifolds.
\newblock {\em Advances in Mathematics}, 307:640--683, 2017.

\bibitem{Kitagawa2019ANA}
Jun Kitagawa, Quentin M{\'e}rigot, and Boris Thibert.
\newblock Convergence of a newton algorithm for semi-discrete optimal
  transport.
\newblock {\em J. Eur. Math. Soc.}, 21(9):2603–2651, 2019.

\bibitem{LeGouic2017}
Thibaut Le~Gouic and Jean-Michel Loubes.
\newblock Existence and consistency of wasserstein barycenters.
\newblock {\em Probability Theory and Related Fields}, 168(3):901--917, Aug
  2017.

\bibitem{LeGouic2022FastCO}
Thibaut Le~Gouic, Quentin Paris, Philippe Rigollet, and Austin Stromme.
\newblock Fast convergence of empirical barycenters in alexandrov spaces and
  the wasserstein space.
\newblock {\em J. Eur. Math. Soc.}, 2022.

\bibitem{language_processing_3}
Xin Lian, Kshitij Jain, Jakub Truszkowski, Pascal Poupart, and Yaoliang Yu.
\newblock Unsupervised multilingual alignment using wasserstein barycenter.
\newblock In Christian Bessiere, editor, {\em Proceedings of the Twenty-Ninth
  International Joint Conference on Artificial Intelligence, {IJCAI-20}}, pages
  3702--3708. International Joint Conferences on Artificial Intelligence
  Organization, 7 2020.
\newblock Main track.

\bibitem{McCann}
Robert~J. McCann.
\newblock A convexity principle for interacting gases.
\newblock {\em Adv. Math.}, 128(1):153--179, 1997.

\bibitem{stats_wass_space}
Victor~M. Panaretos and Yoav Zemel.
\newblock {\em An Invitation to Statistics in Wasserstein Space}.
\newblock SpringerBriefs in Probability and Mathematical Statistics. Springer
  Cham, 2020.

\bibitem{PASS2013947}
Brendan Pass.
\newblock Optimal transportation with infinitely many marginals.
\newblock {\em Journal of Functional Analysis}, 264(4):947--963, 2013.

\bibitem{comp_OT}
Gabriel Peyré and Marco Cuturi.
\newblock Computational optimal transport.
\newblock {\em Foundations and Trends in Machine Learning}, 11(5-6):355--607,
  2019.

\bibitem{image_processing}
Julien Rabin, Gabriel Peyr{\'e}, Julie Delon, and Bernot Marc.
\newblock {Wasserstein Barycenter and its Application to Texture Mixing}.
\newblock In {\em {SSVM'11}}, pages 435--446, Israel, 2011. {Springer}.

\bibitem{santambrogio2015optimal}
Filippo Santambrogio.
\newblock Optimal transport for applied mathematicians.
\newblock {\em Birk{\"a}user, NY}, 55:58--63, 2015.

\bibitem{SANTAMBROGIO2016152}
Filippo Santambrogio and Xu-Jia Wang.
\newblock Convexity of the support of the displacement interpolation:
  Counterexamples.
\newblock {\em Applied Mathematics Letters}, 58:152--158, 2016.

\bibitem{JMLR:v11:shalev-shwartz10a}
Shai Shalev-Shwartz, Ohad Shamir, Nathan Srebro, and Karthik Sridharan.
\newblock Learnability, stability and uniform convergence.
\newblock {\em Journal of Machine Learning Research}, 11(90):2635--2670, 2010.

\bibitem{geometry_processing}
Justin Solomon, Fernando de~Goes, Gabriel Peyr\'{e}, Marco Cuturi, Adrian
  Butscher, Andy Nguyen, Tao Du, and Leonidas Guibas.
\newblock Convolutional wasserstein distances: Efficient optimal transportation
  on geometric domains.
\newblock {\em ACM Trans. Graph.}, 34(4), jul 2015.

\bibitem{JMLR:v19:17-084}
Sanvesh Srivastava, Cheng Li, and David~B. Dunson.
\newblock Scalable bayes via barycenter in wasserstein space.
\newblock {\em Journal of Machine Learning Research}, 19(8):1--35, 2018.

\bibitem{sturm-03}
Karl-Theodor Sturm.
\newblock Probability measures on metric spaces of nonpositive curvature.
\newblock {\em Contemp. Math.}, 338, 01 2003.

\bibitem{van1996weak}
A.W. van~der Vaart and J.A. Wellner.
\newblock {\em Weak Convergence and Empirical Processes: With Applications to
  Statistics}.
\newblock Springer Series in Statistics. Springer, 1996.

\bibitem{vapnik-91}
V.~Vapnik.
\newblock Principles of risk minimization for learning theory.
\newblock In J.~Moody, S.~Hanson, and R.P. Lippmann, editors, {\em Advances in
  Neural Information Processing Systems}, volume~4. Morgan-Kaufmann, 1991.

\bibitem{varadarajan}
V.~S. Varadarajan.
\newblock On the convergence of sample probability distributions.
\newblock {\em Sankhyā: The Indian Journal of Statistics (1933-1960)},
  19(1/2):23--26, 1958.

\bibitem{villani2008optimal}
C{\'e}dric Villani.
\newblock {\em Optimal transport: old and new}, volume 338.
\newblock Springer Science \& Business Media, 2008.

\bibitem{Weed2019SharpAA}
Jonathan Weed and Francis~R. Bach.
\newblock Sharp asymptotic and finite-sample rates of convergence of empirical
  measures in wasserstein distance.
\newblock {\em Bernoulli}, 2019.

\end{thebibliography}

\end{document}